\documentclass[12pt]{article}

\usepackage[utf8]{inputenc} 
\usepackage[T1]{fontenc}  
\usepackage{amsmath}
\usepackage{amssymb}
\usepackage{amsfonts}
\usepackage{url, enumerate}
\usepackage{hyperref}

\def\udwo{\mbox{UDWO}}

\newcommand{\eop}{\bigstar}  

\newcommand{\cf}{{\rm cf}}

\newenvironment{proof}{\noindent{\bf Proof.}}{\par\bigskip}

\newenvironment{proof-}{\noindent{\bf Proof}}{\par\bigskip}

\newtheorem{THEOREM}{Theorem}[section]

\newtheorem{Conclusion}[THEOREM]{Conclusion}

\newtheorem{Hypothesis}[THEOREM]{Hypothesis}

\newtheorem{LEMMA}[THEOREM]{Lemma}

\newtheorem{Main Theorem}[THEOREM]{Main Theorem}
\newenvironment{main Theorem}{\begin{Main Theorem}} 
{\end{Main Theorem}}

\newtheorem{Theorem}[THEOREM]{Theorem}
\newenvironment{theorem}{\begin{Theorem}}{\end{Theorem}}

\newtheorem{Definition}[THEOREM]{Definition}
\newenvironment{definition}{\begin{Definition}}{\end{Definition}}

\newtheorem{Conventions}[THEOREM]{Conventions}

\newtheorem{Main Definition}[THEOREM]{Main Definition}
\newenvironment{main definition}{\begin{Main Definition}}
{\end{Main Definition}}

\newtheorem{Lemma}[THEOREM]{Lemma}
\newenvironment{lemma}{\begin{Lemma}}{\end{Lemma}}

\newtheorem{Notation}[THEOREM]{Notation}

\newtheorem{Convention}[THEOREM]{Convention}
\newenvironment{convention}{\begin{Convention}}{\end{Convention}}

\newtheorem{Note}[THEOREM]{Note}

\newtheorem{Observation}[THEOREM]{Observation}
\newenvironment{observation}{\begin{Observation}}
{\end{Observation}}

\newtheorem{Remark}[THEOREM]{Remark}

\newtheorem{Question}[THEOREM]{Question}
\newenvironment{question}{\begin{Question}}{\end{Question}}

\newtheorem{Main Fact}[THEOREM]{Main Fact}
\newenvironment{main Fact}{\begin{Main Fact}}{\end{Main Fact}}

\newtheorem{Fact}[THEOREM]{Fact}

\newtheorem{Subfact}[THEOREM]{Subfact}

\newtheorem{Claim}[THEOREM]{Claim}

\newtheorem{Main Claim}[THEOREM]{Main Claim}
\newenvironment{main claim}{\begin{Main Claim}}{\end{Main Claim}}

\newtheorem{Crucial Claim}[THEOREM]{Crucial Claim}
\newenvironment{crucial claim}{\begin{Crucial Claim}}{\end{Crucial Claim}}

\newtheorem{Subclaim}[THEOREM]{Subclaim}

\newtheorem{Sublemma}[THEOREM]{Sublemma}

\newtheorem{Corollary}[THEOREM]{Corollary}
\newenvironment{corollary}{\begin{Corollary}}{\end{Corollary}}

\newtheorem{Example}[THEOREM]{Example}
\newenvironment{example}{\begin{Example}}{\end{Example}}

\newtheorem{Problem}[THEOREM]{Problem}

\newtheorem{Proposition}[THEOREM]{Proposition}

\newtheorem{Conjecture}[THEOREM]{Conjecture}

\newtheorem{Discussion}[THEOREM]{Discussion}


\newenvironment{Proof of the Subfact}
{\noindent{\bf Proof of the Subfact.}}{\par\bigskip}

\newenvironment{Proof of the Theorem}
{\noindent{\bf Proof of the Theorem.}}{\par\bigskip}

\newenvironment{Proof of the Proposition}
{\noindent{\bf Proof of the Proposition.}}{\par\bigskip}

\newenvironment{Proof of the Conclusion}
{\noindent{\bf Proof of the Conclusion.}}{\par\bigskip}

\newenvironment{Proof of the Observation}
{\noindent{\bf Proof of the Observation.}}{\par\bigskip}

\newenvironment{Proof of the Fact}
{\noindent{\bf Proof of the Fact.}}{\par\bigskip}

\newenvironment{Proof of the Lemma}
{\noindent{\bf Proof of the Lemma.}}{\par\bigskip}

\newenvironment{Proof of the Claim}
{\noindent{\bf Proof of the Claim.}}{\par\bigskip}

\newenvironment{Proof of the Corollary}
{\noindent{\bf Proof of the Corollary.}}{\par\bigskip}

\newenvironment{Proof of the Subclaim}
{\noindent{\bf Proof of the Subclaim.}}{\par\medskip}

\newenvironment{Proof of the Main Claim}
{\noindent{\bf Proof of the Main Claim.}}{\par\bigskip}

\newenvironment{Proof of the Crucial Claim}
{\noindent{\bf Proof of the Crucial Claim.}}{\par\bigskip}



\newcommand{\into}{\rightarrow}


\newcommand{\satisfies}{\vDash}



\newcommand{\CC}{{\cal C}}

\newcommand{\EE}{{\cal E}}

\newcommand{\KK}{{\cal K}}
\newcommand{\LL}{{\cal L}}

\def\xb{\bar{x}}

\def\empty{\emptyset}

\newcount\skewfactor
\def\mathunderaccent#1#2 {\let\theaccent#1\skewfactor#2
\mathpalette\putaccentunder}
\def\putaccentunder#1#2{\oalign{$#1#2$\crcr\hidewidth
\vbox to.2ex{\hbox{$#1\skew\skewfactor\theaccent{}$}\vss}\hidewidth}}






\newcommand{\nmodels}{\nvDash}

\newcommand{\dom}{\mbox{\rm dom}}

\newcommand{\ran}{\mbox{\rm ran}}

\newcommand{\ma}{\mathfrak A}
\newcommand{\mb}{\mathfrak B}
\newcommand{\dg}{\rm BG}

\author{Mirna D\v zamonja (\texttt{mdzamonja@irif.fr})\\
    IRIF (CNRS \& Université de Paris)\\
 8 Place Aurélie Némours, 75205 Paris Cedex 13, France\\
 and\\
Jouko V\"a\"an\"anen (\texttt{jouko.vaananen@helsinki.fi})\\
Department of Mathematics University of Helsinki, Helsinki, Finland,\\
and Institute for Logic, Language and Computation\\
University of Amsterdam, Netherlands\\
}

\title{Chain Logic and Shelah's Infinitary Logic}
\begin{document}
\maketitle

\begin{abstract} For a cardinal of the form $\kappa=\beth_\kappa$, Shelah's 
logic $L^1_\kappa$ has a characterisation as the maximal logic above $\bigcup_{\lambda<\kappa} L_{\lambda, \omega}$ 
satisfying  a strengthening of the undefinability of well-order. Karp's chain logic \cite{Karpintroduceschain} 
$L^c_{\kappa, \kappa}$ is  known to satisfy the undefinability of well-order and interpolation. We prove that 
if $\kappa$ is singular of countable cofinality, Karp's chain logic \cite{Karpintroduceschain} is above $L^1_\kappa$. 
Moreover, we show that if $\kappa$ is a strong limit of singular cardinals of countable cofinality, the chain logic $L^c_{<\kappa, <\kappa}=\bigcup_{\lambda<\kappa} L^c_{\lambda, \lambda}$
is a maximal logic with chain models to satisfy a version of the undefinability of well-order.

We then show that the chain logic gives a partial solution to Problem 1.4. from Shelah's \cite{Sh797}, which asked 
whether for $\kappa$ singular of countable cofinality there was a logic strictly between 
$ L_{\kappa^+, \omega}$ and $L_{\kappa^+, \kappa^+}$ having interpolation. We show that modulo accepting as the upper bound a model class of $L_{\kappa, \kappa}$,
Karp's  chain logic satisfies the required properties. In addition, we show that 
this chain logic is not $\kappa$-compact, a question that we have asked on various occasions. 
We contribue to further development of chain logic by proving the Union Lemma and identifying the chain-independent fragment of the logic, showing that it still has considerable expressive power.

In conclusion, we have shown that 
the simply defined chain logic emulates the logic $L^1_\kappa$ in satisfying interpolation, undefinability of well-order and maximality with respect to it, and the Union Lemma. In addition it has a 
completeness theorem.\footnote{We thank Stamatis Dimopoulos and Will Boney for discussion involving the comparison of the chain logic and the logic $L_{\kappa,\omega}$. Mirna D\v zamonja gratefully acknowledges the help of Leverhulme Trust through research Fellowship 2014-2015, the University of Helsinki for their hospitality in October 2014 and June 2016 and the School of Mathematics, University of East Anglia where she was a Professor of Mathematics when this paper was written. 
She received funding from the European Union Horizon 2020 research and innovation programme under the Marie Sklodowska-Curie grant agreement No 1010232 and from the GA{\v C}R project EXPRO 20-31529X and RVO: 67985840 of the Czech Academy of Sciences.
 Jouko V\"a\"an\"anen  was supported by the Faculty of Science of the University of Helsinki, the Academy of Finland grant 322795, and the European Research Council Advanced Grant No 101020762.

Keywords: singular cardinal, strong undefinability of well order, $L^1_\kappa$, chain models, abstract logic.

MSC 2010 Classification: 03C95, 03C85, 03E75.}
\end{abstract}

\section{Introduction}\label{intro} In 2012
Saharon Shelah \cite{Sh797} introduced the logic $L^1_\kappa$, defined for $\kappa$ such that $\kappa=\beth_\kappa$. The motivation was to find a strong logic which has many nice properties of first order logic, such as Interpolation, and which moreover, like first order logic, has a  
 model-theoretic characterisation in the style of Lindstr{\"o}m. 
 Lindstr{\"o}m characterised \cite{Lindstrom}  first order logic as the maximal abstract logic (a notion defined precisely below) satisfying the Downward L{\"o}wenheim-Skolem theorem to $\aleph_0$ and Compactness. In spite of the abundance of  abstract logics discovered in the 1960s and 1970s, rare are those that generalise first order logic and have a characterisation of this type. Search for logics with
Lindstr{\"o}m-style characterisation is an active area of research, where positive results were obtained in contexts either far from or weaker than first order logic, including various versions of modal logic 
\cite{deRijke},\cite{vanBenthem},  \cite{Kurz}, \cite{Enqvist2}, \cite{Enqvist}, \cite{Pourmahdian}, or for fragments of first order logic \cite{vanBenthemCate}.  Further examples and references are in \cite{JoukoLindstrom}.

Shelah proved (Theorem 3.4 in  \cite{Sh797}) that $L^1_\kappa$ has such a characterisation, namely it is the maximal logic which is above $\bigcup_{\lambda<\kappa}L_{\lambda, \omega}$ and which satisfies a strong form of the Undefinability of Well Order, called SUDWO. This characterisation makes $L^1_\kappa$ a very special infinitary logic, having a rare property that was not previously known for any classical extension of first order logic.

A feature of Shelah's logic is that it is defined via its elementary equivalence relation. In fact, no syntax is known for this logic in the sense of the syntax of logics such as $L_{\kappa,\lambda}$. For a long time this has been a block for further study of $L^1_\kappa$, as it rendered the logic very complicated. In this work we combat that by giving another view of $L^1_\kappa$, which makes it rather clear what the advantages and disadvantages of that logic are.
At the moment of writing this paper, there are several preprints or work in preparation which study $L^1_\kappa$. In the preprint \cite{Sh1101}, Shelah proves that for $\kappa$ a strongly compact cardinal,
two given models $\mathfrak A$ and $\mathfrak B$ are $L^1_\kappa$-equivalent iff for some $\omega$-sequence
$\bar{\mathcal U}$ of $(<\kappa)$-complete ultrafilters, the iterated ultrapowers of $\mathfrak A$ and $\mathfrak B$ by $\bar{\mathcal U}$ are isomorphic.
In joint yet
unpublished work with A. Villaveces \cite{JoukowithVillaveces} the second author has introduced a fragment of $L^1_\kappa$, which does have a simple syntax and which comes within the so called $\Delta$-extension of $L^1_\kappa$. In joint but as yet unpublished work with B. Veli{\v c}kovi{\'c} \cite{JoukowithVelickovic} the second  author 
has introduced a strengthening of $L^1_\kappa$ which has a syntax in the usual sense and which is equivalent to $L^1_\kappa$ in the case that $\kappa=\beth_\kappa$.

Coming back to the motivation for $L^1_\kappa$, it had became apparent early on that infinitary logics beyond $L_{\omega_1,\omega}$ do not permit a Completeness theorem or an Interpolation theorem in the same sense as first order logic. Already in 1964, Carol Karp introduced the concept of a chain model and showed, in co-operation with her students Ellen Cunningham and Judy Green, see \cite{Cunnigham} and \cite{Green}, that for $\kappa$ strong limit of cofinality $\omega$, the infinitary logic $L_{\kappa\kappa}$ behaves quite nicely in chain models. In particular, they proved a Completeness theorem and the Interpolation theorem for chain logic. 
The main point about chain logic which allows it to have many nice properties, is that the notion of a model is changed to the notion of a chain model (see Definition \ref{chainour}). The price 
we pay is that the notion of isomorphism is also changed, being replaced by the notion of chain-isomorphism (see Definition \ref{chainour}). However, both the notion of  chain model and the chain isomorphism are rather simple and 
are also well studied in the literature, as is seen by the various references we quote, starting from \cite{Karpintroduceschain}.

The advantage of chain logic is that its syntax is the same as the syntax of the classical $L_{\kappa,\kappa}$. The difference is in the concept of a model. Our purpose in this paper is to establish the mutual relationships 
between different chain logics and 
Shelah's logic. The main difficulty is that chain logic and $L^1_\kappa$ are based on a different concept of a model, chain versus the classical model. However, accepting the notion of a chain model, the logic $L^c_{\kappa, \kappa}$
has syntax, Interpolation, Undefinability of Well Order (UDWO) and an associated $EF$-game. Moreover, coming back to  Lindstr{\"o}m-style characterisations, we prove that chain logic also has it, in an appropriate sense !  Namely, at strong limits of singular cardinals of countable cofinality, chain logic
$L^c_{<\kappa, <\kappa}=\bigcup_{\lambda<\kappa} L^c_{<\lambda, <\lambda} $ is also characterised as a maximal logic with UDWO. In fact, the only known property of $L^1_\kappa$ for which it is not known if $L^c_{<\kappa, <\kappa}$ has it, or if it even makes sense to require it, is SUDWO. SUDWO is a strengthening of UDWO by an extra 
property that Shelah calls an `a posteriori' property, meaning that its definition is motivated by the proof of the characterisation theorem. We now explain some of our findings, which among other things suggest that SUDWO is strictly stronger than UDWO.

For $\kappa$ singular of countable cofinality satisfying 
$\kappa=\beth_\kappa$ (such as the first fixed point of the $\beth$-hierarchy), we prove that  the chain logic $L^c_{\kappa, \kappa}$ of \cite{Karpintroduceschain} is above $L^1_\kappa$ in the Chu order and that this is in some sense strict. The Chu order extends the one used in the characterisations above (Theorem \ref{complok} and Theorem \ref{chainbigger}). 
Our work also gives that 
 chain logic provides a solution to Problem 1.4. from \cite{Sh797}, which was one of the motivations for $L^1_\kappa$:

{\bf Problem 1.4. from \cite{Sh797}}: Suppose that $\kappa$ is a singular strong limit cardinal of countable cofinality. Is there a logic between $L_{\kappa^+, \omega}$ and $L_{\kappa^+, \kappa}$ which satisfies Interpolation?

Our Corollary \ref{solves1.4.} shows that modulo replacing $L_{\kappa^+, \kappa}= L_{\kappa, \kappa}$ (since $\kappa$ is singular, see Observation \ref{moreforless}) by a model class of $L_{\kappa, \kappa}$, the chain logic has the required properties. The logic $L^1_\kappa$, although it has Interpolation, does not solve this problem, because it is not
above $L_{\kappa^+, \omega}$ (it is above $\bigcup_{\lambda<\kappa} L_{\lambda, \omega}$).

As one of our main findings, we show that the chain logic has a  Lindstr{\"o}m-style 
characterisation as the maximal to satisfy UDWO, where UDWO and maximality are interpreted in the appropriate sense, Theorem \ref{realmax}.
We also show (Theorem \ref{incompactness}) that chain logic is not $\kappa$-compact, resolving a question about this logic which we have asked on numerous occasions.

The final part of our paper contributes to the further development of chain logic by proving the Union Lemma and identifying the chain-independent fragment of the logic, which surprisingly still has a lot of expressive power.

\subsection{Organisation, background and notation} 
The paper is organised as follows. \S\ref{intro} describes the motivation and the main result, after which it gives the mathematical background and notation. \S\ref{Chudefined} defines one of the main tools of the paper, which are Chu transforms, to be used in later sections 
for comparing various abstract logics. \S\ref{sec:chain} introduces chain logic and its basic properties, and locates it between $L_{\kappa,\omega}$ and a model class of $L_{\kappa,\kappa}$. \S\ref{sec:Shelah} introduces Shelah's logic $L^1_\kappa$. The main results on comparing  chain logic and $L^1_\kappa$ are found in \S\ref{sec:main}. The characterisation of chain logic as the maximal chain logic to satisfy UDWO
is in \S\ref{maximalitysec}. Further results about chain logic appear
in \S\ref{sec:further}. \S\ref{fin} contains 
some concluding remarks. 

Throughout the paper $\tau$ 
denotes a fixed relational language of countable size which in some contexts may be enriched with up to 
$\kappa$ constant symbols, for some fixed cardinal $\kappa$ to be specified. 
The arity of the relation symbols is always assumed to be $<\kappa$.
When we wish to discuss different languages at the same time, we may also use the term `vocabulary'  to denote each of them. The words `model of $\tau$' refer to non-empty sets equipped with the interpretation of the symbols of  $\tau$. The concept of a model, sometimes also called a `structure', refers to models of $\tau$ in which we have also interpreted the satisfaction relation of the logic we are working with.  

For simplicity, we 
 work with
a singular cardinal  $\kappa$ of countable 
cofinality and a cofinal strictly increasing sequence $\langle \kappa_n:\,n<\omega\rangle$ of cardinals with limit $\kappa$.

We study various logics 
simultaneously, so it is useful to have an abstract definition of a logic. 
From \cite{Joukoablogic}:

\begin{definition}\label{abslogic}  A {\em logic} is a triple of the form ${\mathfrak L}=(L, \models_{\mathfrak L}, S)$ where 
$\models_{\mathfrak L} \subseteq  S\times L$ and $S$ comes with a notion of isomorphism, usually understood from the context. We think of $L$ as the set or class of sentences of
${\mathfrak L}$, $S$ as a set or class of models of ${\mathfrak L}$ and of 
$\models_{\mathfrak L}$ as the satisfaction relation. The classes $L$ and $S$ can be proper classes.
If the rest is clear from the context, we often identify ${\mathfrak L}$ with $L$.
\end{definition}

To be able to use the context of \cite{Sh797}, we need to assume some 
additional properties of the logics in question.
Following Shelah's notation we define:

\begin{definition}\label{nicelogic} A logic $(L, \models_{\mathfrak L}, S)$ is {\em nice} iff it satisfies the following
requirements:
\begin{itemize}
\item for any $n$-ary relation symbol $P$ and constant symbols $c_0, \ldots c_{n-1}$ in $\tau$, 
$P[c_0, \ldots c_{n-1}]$ is a sentence in $L$,
\item $L$ is closed under negation, conjunction and disjunction,
\item for any $\varphi\in L$ and $M\in S$,
$M  \nmodels_{\mathfrak L} \varphi$ if and only if $M  \models_{\mathfrak L} \neg\varphi$,
\item $M \models_{\mathfrak L} \varphi_1\wedge \varphi_2$ iff $M \models_{\mathfrak L} \varphi_1$
and $M \models_{\mathfrak L} \varphi_2$, and similarly for disjunction,
\item for any $M\in S$, $a\in M$ and a sentence $\psi[a]\in S$ 
such that $M \models_{\mathfrak L}\psi[a]$, we have that $M\models_{\mathfrak L} (\exists x) \psi(x)$, and 
conversely, if $M\models_{\mathfrak L} (\exists x) \psi(x)$ then there is $a\in M$ such that $M \models_{\mathfrak L}\psi[a]$,
\item if $M_0$ and $M_1$ are isomorphic models of $\tau$, by some isomorphism $f$, and if both $M_0, M_1$ are in $S$, then for every $\varphi\in L$ we have $M_0  \models_{\mathfrak L} \varphi[a_0, \ldots a_{n_1}]$ iff 
$M_1  \models_{\mathfrak L} \varphi[f(a_0), \ldots f(a_{n_1})]$.
\end{itemize}
\end{definition}

{\bf Remark.} In this paper we deal with the logics of the form $L_{\lambda, \theta}$, the chain logics
$L^c_{\kappa, \kappa}$ and $L^{c,*}_{\kappa, \kappa}$, and Shelah's logic $L^1_\kappa$. They are all nice logics (a fact that is not trivial for Shelah's logic but is proved in Claim 2.7(1) of \cite{Sh797}), except that the closure under isomorphism in $L^{c,*}_{\kappa, \kappa}$ and *$L^{c}_{\kappa, \kappa}$  has to be replaced by the closure under chain isomorphism 
(Definition \ref{chainour}). We also note that our investigation will include non-standard interpretations of $\models$, see for example
equation (\ref{newtruth}), to be found a couple of paragraphs above Definition \ref{generalchainlogic}.

\begin{definition}
${\mathfrak L}$ is {\em $(\theta,\lambda)$-compact} if every set of
$\le\theta$ sentences of $L$ such that every subset of size $<\lambda$ has a model (in $S$), itself has a model (in $S$). 
${\mathfrak L}$
is {\em $\lambda$-compact} if it is  $(\theta,\lambda)$-compact for all $\theta$. 
\end{definition}

Our notation follows that of \cite{Ebbinghaus}.\footnote{Some authors denote what we call 
$\lambda$-compactness by {\em strong} $\lambda$-compactness.}

We shall often discuss logics of the form $L_{\lambda,\theta}$, 
in which sentences
are formed as in the case of first order logic, but we are allowed to use conjunctions and disjunctions of
length $<\lambda$ and strings of $<\theta$ existential quantifiers. The following is a well-known observation about such logics.

\begin{observation}\label{moreforless} If $\lambda$ is a singular cardinal, then every $L_{\lambda^+,\theta}$
sentence is expressible in an equivalent form as an $L_{\lambda,\theta}$ sentence.
\end{observation}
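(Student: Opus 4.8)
The plan is to argue by induction on the complexity of the $L_{\lambda^+,\theta}$ formula, exploiting the fact that $L_{\lambda^+,\theta}$ and $L_{\lambda,\theta}$ differ in exactly one respect: both permit blocks of fewer than $\theta$ quantifiers, but $L_{\lambda^+,\theta}$ allows conjunctions and disjunctions indexed by sets of cardinality $\le\lambda$, whereas $L_{\lambda,\theta}$ allows only those of cardinality $<\lambda$. Consequently every step of the formula construction apart from conjunction and disjunction is inherited verbatim, and the only thing I would need to explain is how to shorten a conjunction (the disjunction case being dual) whose length is exactly $\lambda$.

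This is precisely where singularity enters. I would fix $\mu=\cf(\lambda)<\lambda$ together with a partition $\lambda=\bigsqcup_{i<\mu}A_i$ of the index set into $\mu$ blocks, each of cardinality $<\lambda$; such a partition exists exactly because $\lambda$ is singular. Then for any family $\langle\varphi_j:\,j<\lambda\rangle$ of formulas there is the logical equivalence
\[
\bigwedge_{j<\lambda}\varphi_j \;\equiv\; \bigwedge_{i<\mu}\Big(\bigwedge_{j\in A_i}\varphi_j\Big),
\]
valid in every model of $\tau$. Each inner conjunction $\bigwedge_{j\in A_i}\varphi_j$ is indexed by a set of size $<\lambda$, and the outer conjunction is indexed by $\mu<\lambda$; hence, once the $\varphi_j$ already lie in $L_{\lambda,\theta}$, the right-hand side is a legitimate $L_{\lambda,\theta}$ formula.

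With this equivalence in hand the induction would go through routinely. Atomic formulas are common to both logics; for negation and for existential quantification over a block of $<\theta$ variables, the inductive hypothesis supplies an $L_{\lambda,\theta}$ formula equivalent to the immediate subformula, and prefixing $\neg$ or the (unchanged, still $<\theta$-long) quantifier block keeps us inside $L_{\lambda,\theta}$. For a conjunction $\bigwedge_{j<\alpha}\varphi_j$ with $\alpha\le\lambda$, I would first replace each $\varphi_j$ by an equivalent $\psi_j\in L_{\lambda,\theta}$: if $\alpha<\lambda$ the conjunction $\bigwedge_{j<\alpha}\psi_j$ is already in $L_{\lambda,\theta}$, while if $\alpha=\lambda$ the displayed splitting (applied to the $\psi_j$) yields the desired $L_{\lambda,\theta}$ formula. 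Disjunctions are treated symmetrically, and specialising $\varphi$ to a sentence delivers the statement.

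I expect no genuine obstacle beyond the conjunction-shortening step. The induction is well founded because infinitary formulas are built by a set-length transfinite recursion, so each carries an ordinal complexity rank on which to induct. The one place where the hypothesis that $\lambda$ is singular is essential, and cannot be removed, is the displayed equivalence: it is the singularity of $\lambda$ that simultaneously keeps both the outer index set (of size $\mu$) and every inner block (of size $<\lambda$) strictly below $\lambda$, whereas for regular $\lambda$ a conjunction of length $\lambda$ need not be re-expressible with conjunctions all of length $<\lambda$.
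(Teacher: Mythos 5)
Your proof is correct and follows essentially the same route as the paper: the paper also argues by induction on complexity and handles the length-$\lambda$ case by rewriting $\bigvee_{i<\lambda}\varphi_i$ as $\bigvee_{j<\cf(\lambda)}\bigvee_{i<\lambda_j}\varphi_i$ for an increasing sequence $\langle\lambda_j:\,j<\cf(\lambda)\rangle$ converging to $\lambda$, which is the same decomposition as your partition into $\cf(\lambda)$ blocks of size $<\lambda$. Your write-up simply spells out the routine inductive clauses that the paper leaves implicit.
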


\begin{proof} The proof is by induction on the complexity of the sentence, where 
 $\bigvee_{i<\lambda} \varphi_i$ is translated into $\bigvee_{j<\cf(\lambda)} \bigvee_{i<\lambda_j} \varphi_i$
for an increasing sequence of cardinals $\langle \lambda_j:\,j<\cf(\lambda)\rangle$ converging to $\lambda$.
$\eop_{\ref{moreforless}}$
\end{proof}

\section{Comparison between logics and Chu transforms}\label{Chudefined}
We develop a general approach for comparing logics by using the notion of Chu 
spaces. This notion, studied in category theory (cf. \cite{BarrChu} and \cite{ChuonChu}) has proven useful in computer sciences and 
generalised logic. A simple way to understand the purpose of Chu tranforms in logic is to recognise that they really are a somewhat more sophisticated version of interpretability of one logic in anotehr. A similar notion to ours was studied in \cite[Definition 2.2]{Joukoablogic}. \footnote{ We note that in the area of cardinal invariants in set theory of the reals and topology, similar reductions were introduced by Vojt{\'a}{\v s} in \cite{Vojtas}.} The main
difference with our definition is in the density condition below. We give Observation \ref{compactnesspreservation} as a 
version of Lemma 2.3 of  \cite{Joukoablogic}.

\begin{definition}\label{def:Chu} A {\em Chu space} over a set $K$ is a triple $(A, r, X)$ where $A$ is a set of points, $X$ is a set of states and the function $r:\,A\times X\into K$ is a $K$-valued binary relation between the elements of $A$ and the elements of $X$. When $K=\{0,1\}$ we just speak of Chu spaces and $r$ becomes a 
2-valued relation.

A {\em Chu transform} between Chu spaces $(A, r, X)$ and $(A', r,' X')$ over the same set $K$ is a pair of functions
$(f,g)$ where $f:\, A\to A'$, $g:\,X'\to X$ and 
 the {\em adjointness condition} 
$r'(f(a), x'))= r(a, g(x'))$ holds.
\end{definition}

We  consider abstract logics $(L,\models, S)$ (see Definition \ref{abslogic}) as Chu spaces and we use the
following notion of Chu transforms to quasi-order abstract logics. 

\begin{definition}\label{def:lesseq} We say that $(L,\models, S)\le (L',\models', S') $ if there is a Chu transform $(f,g)$ between 
 $(L,\models, S)$ and $(L',\models', S')$ 
 such that the range of $g$ is {\em dense} in the following sense:
 \begin{itemize}
\item for every $\phi \in L$ for which there is $s\in S$ with $s\models \phi$, there is $s'\in S'$ with 
$g(s')\models \phi$.
\end{itemize}
\end{definition}

For example, any $g$ which is onto will  satisfy the density condition. 

\begin{observation}\label{partailorder} The relation $\le$ introduced in Definition \ref{def:lesseq} is reflexive
and transitive.
\end{observation}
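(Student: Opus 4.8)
The plan is to verify directly that the relation $\le$ from Definition \ref{def:lesseq} satisfies the two defining properties of a quasi-order, namely reflexivity and transitivity. For reflexivity, I would exhibit, for each abstract logic $(L, \models, S)$, a Chu transform from the space to itself whose codomain map is dense. The natural candidate is the pair $(f,g)=(\id_L, \id_S)$. I would check the adjointness condition $r'(f(\phi), s)=r(\phi, g(s))$, which here reads $(s\models \phi)=(s\models\phi)$ and so holds trivially. For the density requirement I would note that if $\phi\in L$ has some $s\in S$ with $s\models\phi$, then taking $s'=s$ gives $g(s')=s\models\phi$, since $g=\id_S$ is onto; indeed the example following Definition \ref{def:lesseq} already records that any onto $g$ is dense.

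For transitivity, suppose $(L,\models, S)\le (L',\models', S')$ witnessed by a Chu transform $(f_1, g_1)$ with dense range, and $(L',\models', S')\le (L'',\models'', S'')$ witnessed by $(f_2, g_2)$ with dense range. The obvious move is to form the composite $(f,g)=(f_2\circ f_1,\, g_1\circ g_2)$, where $f_2\circ f_1:\, L\to L''$ and $g_1\circ g_2:\, S''\to S$, and to argue that this is again a Chu transform with dense range. Here one must be attentive to the direction of the maps: the point maps $f_i$ go forward along the logics while the state maps $g_i$ go backward, so the composite of the backward maps is $g_1\circ g_2$, sending $S''$ to $S'$ to $S$.

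I would verify adjointness for the composite by chaining the two given adjointness conditions. For $\phi\in L$ and $s''\in S''$ we have, writing $\models, \models', \models''$ for the respective relations,
\[
\bigl(s''\models'' f_2(f_1(\phi))\bigr)=\bigl(g_2(s'')\models' f_1(\phi)\bigr)=\bigl(g_1(g_2(s''))\models \phi\bigr),
\]
where the first equality applies the adjointness of $(f_2,g_2)$ to the sentence $f_1(\phi)\in L'$ and the second applies the adjointness of $(f_1,g_1)$ to $\phi$. This is exactly the adjointness condition for $(f,g)$.

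Finally I would establish density of the composite. Given $\phi\in L$ with some $s\in S$ satisfying $s\models\phi$, density of $g_1$ yields $s'\in S'$ with $g_1(s')\models\phi$; now $g_1(s')\models\phi$ means $\phi$ is satisfiable in $S'$ via a state in the range of $g_1$, and I would feed this into density of $g_2$ to obtain $s''\in S''$ with $g_2(s'')\models' \psi$ for the relevant sentence, concluding $g_1(g_2(s''))\models\phi$. The one subtlety, and the step I expect to require the most care, is precisely this transfer across the two density conditions: density is phrased for sentences of the source logic, so I must track that the sentence whose satisfiability I propagate through $(f_2,g_2)$ is the $f_1$-image (or an appropriate correlate) of $\phi$, and confirm that the adjointness equalities above let me pull the witness $g_2(s'')$ all the way back to a state in $S$ that still satisfies $\phi$. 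Everything else is the routine bookkeeping of composing maps that run in opposite directions.
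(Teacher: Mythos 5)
Your proposal is correct; the paper in fact states this observation without proof, and your argument (the identity pair for reflexivity, the composite $(f_2\circ f_1,\,g_1\circ g_2)$ for transitivity) is exactly the routine verification it implicitly relies on. The one step you flagged as delicate does close as you anticipated: adjointness of $(f_1,g_1)$ turns $g_1(s')\models\phi$ into $s'\models' f_1(\phi)$, so density of $g_2$ applies to the sentence $\psi=f_1(\phi)$, yielding $s''\in S''$ with $g_2(s'')\models' f_1(\phi)$, and a second use of the same adjointness pulls this back to $g_1(g_2(s''))\models\phi$.
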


Therefore Chu transformations provide a partial order among logics. This order preserves and reflects many properties of the logics being compared. An important one is illustrated by the following one, observed by Francesco Parente \cite{fparente} as a comment on an earlier draft of this paper. 

\begin{observation}[Parente \cite{fparente}]\label{francesco} Suppose that $(L,\models, S)$ and $(L',\models', S')$ are nice logics and that
 $(L,\models, S)\le (L',\models', S')$.
Then, for any boolean combination $\varphi$ of $\psi_0, \ldots, \psi_n$ in $L$, $f(\varphi)$ 
 is $L'$-equivalent to the corresponding boolean combination of $f(\psi_0), \ldots, f(\psi_n)$.
\end{observation}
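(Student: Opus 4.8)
The plan is to induct on the structure of the boolean combination $\varphi$, reducing to the three generating connectives of negation, conjunction and disjunction. The base case, in which $\varphi$ is one of the $\psi_i$, is immediate since then $f(\varphi)=f(\psi_i)$ and $L'$-equivalence is trivial. For the inductive step the single mechanism I would use is to combine the adjointness condition, which gives $s'\models' f(\phi)$ iff $g(s')\models\phi$ for every $\phi\in L$ and every $s'\in S'$, with the niceness of the two logics, which lets each boolean connective commute with the satisfaction relation in $\mathfrak L$ and in $\mathfrak L'$ respectively. Note that $f$ is total on $L$ and $L$ is closed under the connectives by niceness, so expressions such as $f(\neg\psi)$ and $f(\psi_1\wedge\psi_2)$ are well-defined members of $L'$.

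Concretely, for negation I would verify, for an arbitrary $s'\in S'$, the chain
\[
s'\models' f(\neg\psi)\iff g(s')\models\neg\psi\iff g(s')\nmodels\psi\iff s'\nmodels' f(\psi)\iff s'\models'\neg f(\psi),
\]
where the first and third equivalences are instances of adjointness (the third being its contrapositive), the second is the negation clause of niceness applied in $\mathfrak L$, and the last is the negation clause applied in $\mathfrak L'$. The conjunction and disjunction cases are entirely parallel: for conjunction one gets, for every $s'\in S'$, that $s'\models' f(\psi_1\wedge\psi_2)$ holds iff $g(s')\models\psi_1$ and $g(s')\models\psi_2$, iff $s'\models' f(\psi_1)$ and $s'\models' f(\psi_2)$, iff $s'\models' f(\psi_1)\wedge f(\psi_2)$, using adjointness on each conjunct together with the conjunction clause of niceness in each logic. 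Since each such biconditional holds for every $s'\in S'$, it is precisely the assertion that $f$ of the compound is $L'$-equivalent to the compound of the $f$-images. The induction then closes because $L'$-equivalence is preserved by the connectives, again by the niceness of $\mathfrak L'$.

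There is no genuinely hard step here; the entire content is the interplay of adjointness with the niceness clauses, and the only real care needed is bookkeeping: applying the "distribute the connective" step inside $\mathfrak L$ and the "recombine" step inside $\mathfrak L'$, and keeping every biconditional quantified over all $s'\in S'$, since that is exactly what $L'$-equivalence demands. I would also remark that this argument invokes only the existence of the Chu transform $(f,g)$ together with niceness of the two logics; the density clause built into the relation $\le$ plays no role in this particular statement, so the conclusion in fact holds for any Chu transform between nice logics.
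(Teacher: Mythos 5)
Your proof is correct and follows essentially the same route as the paper's: both verify the $\neg$ and $\wedge$ cases by chaining adjointness with the niceness clauses of each logic, quantified over all $s'\in S'$, with the remaining cases following by (implicit or explicit) induction. Your closing remark that the density condition plays no role is also accurate, as the paper's proof likewise never invokes it; the only cosmetic difference is that the paper treats an arbitrary conjunction $\bigwedge\Phi$ rather than a binary one, a slight generalisation it needs later for Corollary \ref{kappacompactnesspreservation}.
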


\begin{proof} It suffices to show the claim for $\neg$ and an arbitrary $\wedge$. Let $\varphi\in L$, then for 
every $M'\in S'$ we have 
\[
M'\models' f(\neg \varphi)\iff g(M')\models (\neg \varphi)\iff 
\]
\[ \iff \mbox{not } g(M')\models \varphi
\iff \mbox{not } M'\models' f(\varphi) \iff  M'\models' \neg f(\varphi).
\]
Let $\Phi\subseteq L$ be such that $\bigwedge \Phi$ exists in $L$. Then for every $M'\in S'$,
\begin{multline*}
M'\models' f(\bigwedge \Phi)\iff g(M')\models \bigwedge \Phi\iff\mbox{ for each } \varphi\in \Phi, 
\, \,g(M')\models \varphi
\\
\iff \mbox{ for each } \varphi\in \Phi, 
\, M'\models' f(\varphi) \iff  M'\models' \bigwedge_{\varphi\in \Phi} f(\varphi).
\eop_{\ref{francesco}}
\end{multline*}
\end{proof}

Chu transforms also preserve compactness. The proof below is typical for many preservation properties of these transforms.

\begin{observation}\label{compactnesspreservation} Suppose that $(L,\models, S)\le (L',\models', S')$ and 
$(L',\models', S')$ is compact. Then so is  $(L,\models, S)$.
\end{observation}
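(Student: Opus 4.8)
The plan is to transfer compactness backward along the Chu transform witnessing $(L,\models,S)\le (L',\models',S')$. Fix the Chu transform $(f,g)$ with $f\colon L\to L'$, $g\colon S'\to S$ satisfying the adjointness condition $r'(f(\varphi),M')=r(\varphi,g(M'))$, i.e.\ $M'\models' f(\varphi)$ iff $g(M')\models\varphi$, together with the density condition on the range of $g$. Let $\Sigma\subseteq L$ be a set of sentences, say of size $\le\theta$, such that every subset of size $<\lambda$ has a model in $S$; I want to produce a model in $S$ for all of $\Sigma$.

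First I would push $\Sigma$ forward through $f$, forming $\Sigma'=f[\Sigma]=\{f(\varphi):\varphi\in\Sigma\}\subseteq L'$. The key step is to check that $\Sigma'$ inherits the finite/$(<\lambda)$-satisfiability hypothesis in $S'$. Take any $\Sigma'_0\subseteq\Sigma'$ of size $<\lambda$; write $\Sigma'_0=f[\Sigma_0]$ for a corresponding $\Sigma_0\subseteq\Sigma$ of size $<\lambda$. By hypothesis $\Sigma_0$ has a model $M\in S$ with $M\models\varphi$ for all $\varphi\in\Sigma_0$. The obstacle here is that $M$ need not lie in the range of $g$, so I cannot immediately invoke adjointness. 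This is exactly what the density condition is designed to circumvent, although density as stated speaks only about single sentences; I would argue that, since each individual $\varphi\in\Sigma_0$ is satisfiable in $S$ (being satisfied by $M$), one can use the adjointness equivalence in the form needed, or more carefully re-run the density argument for the conjunction $\bigwedge\Sigma_0$ when it exists in $L$. Under the niceness assumptions and Observation \ref{francesco}, $f$ commutes with boolean combinations up to $L'$-equivalence, so a model of $\bigwedge\Sigma_0$ in the range of $g$ yields, via adjointness, a model $M'\in S'$ of $\Sigma'_0$.

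Having established that every $<\lambda$-sized subset of $\Sigma'$ has a model in $S'$, I apply the compactness of $(L',\models',S')$ to obtain a single $M'\in S'$ with $M'\models'\psi$ for all $\psi\in\Sigma'$. Then $g(M')\in S$, and by the adjointness condition, for each $\varphi\in\Sigma$ we have $g(M')\models\varphi$ iff $M'\models' f(\varphi)$, and the latter holds since $f(\varphi)\in\Sigma'$. Thus $g(M')$ is a model of all of $\Sigma$ in $S$, completing the proof.

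The main obstacle I anticipate is the mismatch between the density condition, which is phrased sentence-by-sentence, and the need to find a model of an entire $(<\lambda)$-sized subfamily inside the range of $g$. The cleanest route is to assume $\lambda\le\mathrm{cf}$ of whatever cardinal makes $\bigwedge\Sigma_0\in L$, so that the relevant conjunction is a legitimate sentence of $L$ and density applies directly to it; otherwise one must phrase the argument purely in terms of the individual adjointness equivalences and the fact that the compactness of $L'$ only requires the satisfiability of finite (or $<\lambda$) subsets, for which a model in $S'$ suffices and no membership in $\rge(g)$ is needed at the intermediate stage. The backward transfer at the very end, by contrast, is immediate and uses only adjointness, not density.
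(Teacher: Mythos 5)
Your proof is correct and follows essentially the same route as the paper's: push $\Sigma$ forward through $f$, establish satisfiability of the small subfamilies of $f[\Sigma]$ in $S'$ by applying the density condition to the conjunction $\bigwedge\Sigma_0$ together with the preservation of boolean combinations (Observation \ref{francesco}), invoke compactness of $(L',\models',S')$, and pull the resulting model back through $g$ using adjointness alone. Your closing remarks about $(<\lambda)$-sized conjunctions correctly identify the extra hypotheses needed beyond the finite case, which is precisely what the paper records separately as Corollary \ref{kappacompactnesspreservation}.
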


\begin{proof} Let $(f,g)$ be the Chu transform which witnesses $(L,\models, S)\le (L',\models', S')$.
Suppose that $\Sigma\subseteq L$ is finitely satisfiable and let $\Sigma'=\{ f(\varphi): \varphi\in \Sigma\}$.
We now claim that $\Sigma'$ is finitely satisfiable. Any finite $\Gamma'\subseteq \Sigma'$ is of the
form $\{ f(\varphi): \varphi\in \Gamma\}$ for some finite $\Gamma\subseteq \Sigma$. Therefore there is $M\in S$
with $M\models \varphi$ for all $\varphi\in \Gamma$. Since $g$ is not necessarily onto, we cannot use it to obtain from $M$ an element of $S'$.

However, $\bigwedge \Gamma$ is a sentence of $L$, by  closure under conjunctions. Since 
$\models$ satisfies Tarski's definition of truth for  quantifier-free formulas,  the fact that 
$M\models \varphi$ for all $\varphi\in \Gamma$ implies  $M\models \bigwedge \Gamma$.
By the density requirement on $g$, there is $M'\in S'$ such that $g(M') \models \bigwedge \Gamma$ and hence
$M'\models' f(\bigwedge \Gamma)$. By the preservation of the logical operations by $f$, we have 
that $f(\bigwedge \Gamma)$ is $L'$-equivalent to $\bigwedge_{\varphi\in \Gamma} f(\varphi)$, so that $M'\models' f(\varphi)$
for all $\varphi\in \Gamma$ and $M'\models'\Gamma'$. Thus $\Gamma'$ is finitely satisfiable in $S'$, which by the assumption implies that there is $N'\in S'$ with $N'\models \Sigma'$. Therefore $g(N')\models \Sigma$.
$\eop_{\ref{compactnesspreservation}}$
\end{proof}

With slight modifications the proof of Observation \ref{compactnesspreservation}  goes through for the higher
degrees of compactness: 

\begin{corollary}\label{kappacompactnesspreservation} Suppose that $(L,\models, S)\le (L',\models', S')$ as witnessed by a pair $(f,g)$. Suppose
that the following conditions are satisfied:
\begin{enumerate}
\item $L, L'$ are closed under conjunctions of $<\lambda$ sentences,
\item $\models$ satisfies Tarski's definition of truth for  quantifier-free formulas, including  conjunctions and
disjunctions of size $<\lambda$,
\item $f$ preserves  conjunctions and disjunctions of size $<\lambda$, in the sense of Observation \ref{francesco}.
\end{enumerate}

Then, for any $\theta$, if $(L',\models', S')$ is $(\lambda, \theta)$-compact, so is $(L,\models, S)$.
\end{corollary}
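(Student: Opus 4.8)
The plan is to mimic the proof of Observation \ref{compactnesspreservation} almost verbatim, replacing finite sets everywhere by sets of size $<\lambda$ and replacing finite satisfiability by $(<\lambda)$-satisfiability, while carefully noting exactly where each of the three hypotheses is invoked. Suppose $(f,g)$ witnesses $(L,\models,S)\le (L',\models',S')$ and suppose $\Sigma\subseteq L$ has $\card{\Sigma}\le\theta$ and every subfamily of size $<\lambda$ has a model in $S$. Put $\Sigma'=\{f(\varphi):\varphi\in\Sigma\}$, so $\card{\Sigma'}\le\theta$ as well. The goal is to show $\Sigma'$ is $(<\lambda)$-satisfiable in $S'$, for then $(\lambda,\theta)$-compactness of $(L',\models',S')$ yields some $N'\in S'$ with $N'\models'\Sigma'$, whence $g(N')\models\Sigma$ by the adjointness condition, finishing the proof.

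First I would take an arbitrary $\Gamma'\subseteq\Sigma'$ with $\card{\Gamma'}<\lambda$; it has the form $\{f(\varphi):\varphi\in\Gamma\}$ for some $\Gamma\subseteq\Sigma$ with $\card{\Gamma}<\lambda$, so by hypothesis there is $M\in S$ with $M\models\varphi$ for all $\varphi\in\Gamma$. Here is where condition (1) enters: since $L$ is closed under conjunctions of $<\lambda$ sentences, $\bigwedge\Gamma$ is a genuine sentence of $L$. Condition (2) then lets me pass from $M\models\varphi$ for all $\varphi\in\Gamma$ to $M\models\bigwedge\Gamma$, exactly as in the compact case but now for a conjunction of size $<\lambda$. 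Applying the density requirement on $g$ to the sentence $\bigwedge\Gamma$, I obtain $M'\in S'$ with $g(M')\models\bigwedge\Gamma$, and hence $M'\models'f(\bigwedge\Gamma)$ by adjointness.

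Finally, condition (3) says $f(\bigwedge\Gamma)$ is $L'$-equivalent to $\bigwedge_{\varphi\in\Gamma}f(\varphi)$, so from $M'\models'f(\bigwedge\Gamma)$ I conclude $M'\models'f(\varphi)$ for every $\varphi\in\Gamma$, that is, $M'\models'\Gamma'$. Thus every $<\lambda$-sized subfamily of $\Sigma'$ is satisfiable in $S'$, as required.

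I do not expect a genuine obstacle here, since the argument is a routine relativisation of the compact case; the only thing that needs care is bookkeeping about which hypothesis does which job, and in particular making sure that the three conditions are precisely what is needed to salvage each step of the original proof. The one mildly subtle point is condition (3): in Observation \ref{francesco} the preservation of boolean combinations under $f$ was proved for \emph{finite} combinations of nice logics, so the corollary must simply \emph{assume} the analogous preservation for conjunctions and disjunctions of size $<\lambda$ rather than derive it, which is exactly why it appears as a standing hypothesis rather than being quoted from the earlier observation.
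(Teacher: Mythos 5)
Your proof is correct and is exactly the argument the paper intends: the paper gives no separate proof of this corollary, stating only that the proof of Observation \ref{compactnesspreservation} goes through with slight modifications, and your relativisation of that proof to $(<\lambda)$-sized subfamilies, with the three hypotheses supplying precisely the steps (closure under $\bigwedge\Gamma$, Tarskian truth for it, and its preservation under $f$) that the finite case got for free from niceness, is that modification. Your closing remark about why condition (3) must be assumed rather than quoted from Observation \ref{francesco} is also apt, since that observation is only stated for finite boolean combinations of nice logics (even though its proof in fact handles arbitrary conjunctions $\bigwedge\Phi$ that exist in $L$).
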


We consider two logics $\mathfrak L$ and $\mathfrak L'$ equivalent if $\mathfrak L\le \mathfrak L'$ and 
$\mathfrak L'\le \mathfrak L$. For the purpose of studying various properties of logics, which is our main concern, once we prove that the relevant property is preserved by a Chu transform,  two equivalent logics are 
the same with respect to that property.  One such property is compactness, as we proved in Observation \ref{compactnesspreservation}.

\section{Chain logic}\label{sec:chain}
In \cite{singtrees} the authors explored  Karp's notion of a {\em chain model} and  the associated model theory of $L_{\kappa, \kappa}$ based on that concept. The definition used in  \cite{singtrees} was as follows. 

\begin{Definition}\label{chainour} A {\em chain model (of length $\mu$)} consists of a model $\mathfrak A$ and a sequence $\langle A_\alpha:\,\alpha<\mu\rangle$, called a {\em chain}. It is assumed that  $A=\bigcup_{\alpha<\mu} A_\alpha$, and that $\langle A_\alpha:\,\alpha<\mu\rangle$  is an increasing
sequence of sets\footnote{hence, automatically, models of $\tau$, since $\tau$ is relational} satisfying $|A_\alpha|<|A|$ for each $\alpha$.
A {\em chain isomorphism} between
$\mathfrak A$ and $\mathfrak B$ is an isomorphism $f:\,\mathfrak A\into \mathfrak B$
such that for all $\alpha$ the image of  $A_\alpha$ is contained in some $B_\beta$ and, conversely, the preimage of any $B_\alpha$
is contained in some $A_\beta$. If there is a chain isomorphism between $\mathfrak A$ and $\mathfrak B$ we write $\mathfrak A\cong^{c}\mathfrak  B$. \end{Definition}

Notice that the decomposition of a model $\mathfrak A$ into a chain is not required to be elementary. 
The instance that we study specifically is $\mu=\omega$, when we shall often write $\mathfrak A=(A_n)_{n<\omega}$.
The defining feature of chain models is the following modification of the
truth definition
of $L_{\lambda, \kappa}$, given by the induction on complexity of $\varphi$. For quantifier-free sentences,
the notions of $\models$ and  $\models^c$ agree, and the quantifier case is covered by the following:

\begin{equation}\label{newtruth}
(A_n)_{n<\omega}\models^c\exists \bar{x}\varphi( \bar{x})\iff\mbox {there are }n<\omega \mbox{ and } \bar{a}\in A_n^{<\kappa} \mbox{ with }
(A_n)_{n<\omega}\models \varphi[ \bar{a}],
\end{equation}
where $\bar{x}$ is a sequence of length $<\kappa$. If we restrict  our attention to chain models, the model
theory of $L_{\kappa, \kappa}$ (equivalently $L_{\kappa^+, \kappa}$ as $\kappa$ is singular) is very much like that of $L_{\omega_1,\omega}$. Karp proved several theorems about  chain logic, and further results by various authors are the
undefinability of well order UDWO (Makkai \cite{Makkaichainlogic}, \cite{Cunnighamthesis}), Craig Interpolation Theorem, Beth Definability Theorem (\cite{Cunnigham}), and an extension of Scott's analysis of countable models to chain
models of size $\kappa$ (which is the contribution
 of \cite{singtrees}).\footnote{More historical details and comparisons with the classical $L_{\kappa, \kappa}$
can be found in \cite{singtrees}.} Various kinds of chain models and their interaction are presented in \S\ref{kindsof} below.

One can use the concept of a chain model to study other logics besides $L_{\kappa, \kappa}$,  adapting the truth definition from (\ref{newtruth}) accordingly, and in fact this was Karp's approach. See \cite{Cunnigham} for a historical overview. 

\begin{definition}\label{generalchainlogic} For a logic $L$, we shall denote by $L^c$ the logic obtained from $L$ by changing the truth definition to use the notion of a chain model, as in (\ref{newtruth}). Some details of this passage might depend on the logic, in which case we specify it in the context.  We only consider
chain models of countable length.
\end{definition}

\subsection{Various kinds of chain models}\label{kindsof} As mentioned in the introduction,
Definition \ref{chainour} 
permits several modifications. We specify them in the following definition.

\begin{Definition}\label{chainournew} (1) A {\em weak chain model} consists of a model $A$ of  $\LL$ and a sequence $\langle A_n:\,n<\omega\rangle$ of subsets of  $A$, called a {\em chain}, where the only assumption that is made is that for all $n<\omega$, $A_n\subseteq A_{n+1}$ and $A=\bigcup_{n<\omega}A_n$. 

We denote such a weak chain model as $(A_n)_{n<\omega}$. If the meaning is clear from the context, we may just use {\em  ``chain model"} for ``weak chain model".

{\noindent (2)} (equivalent to Definition \ref{chainour})
A {\em chain model of strict power at most $\kappa$} is a weak chain model $(A_n)_{n<\omega}$ where for each $n$,   $|A_n|<|A|$ and $|A|\le\kappa$.  

{\noindent (3)} A  {\em proper chain model} is a weak chain model $(A_n)_{n<\omega}$ where for each $n$,  $|A_{n}|<|A_{n+1}|$.   
\end{Definition}

\begin{observation}\label{variouschainmodels} (1) Any ordinary model $\mathfrak A$ is a weak chain model, where  all elements are the same set, the universe of $\mathfrak A$. In particular, weak chain models exist in all cardinalities.

{\noindent (2)} The cardinality requirements on proper chain models mean that such models can only exist in cardinals of countable cofinality. 

{\noindent (3)} Every chain model $(A_n)_{n<\omega}$ of strict power at most $\kappa$ gives rise to a chain-isomorphic proper chain 
model of the form $(A_{k_n})_{n<\omega}$, obtained by a passing to a subsequence of 
$\langle A_n:\,n<\omega\rangle$. 
\end{observation}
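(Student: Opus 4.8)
The plan is to verify all three clauses directly from the definitions, since each is elementary. For (1), given an ordinary model $\mathfrak A$ with universe $A$, I would take $A_n = A$ for every $n<\omega$; then $A_n \subseteq A_{n+1}$ and $A = \bigcup_{n<\omega} A_n$ hold trivially, so $(A_n)_{n<\omega}$ is a weak chain model in the sense of Definition \ref{chainournew}(1), where no cardinality constraint is imposed. As ordinary models of $\tau$ exist in every cardinality, so do weak chain models.

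For (2), suppose $(A_n)_{n<\omega}$ is a proper chain model, so $|A_n| < |A_{n+1}|$ for each $n$. I would first note that $|A| = \sup_{n<\omega}|A_n|$: on one side $|A_n|\le|A|$ for all $n$, and on the other $A=\bigcup_{n<\omega}A_n$ is a countable union, so $|A|\le \aleph_0\cdot\sup_n|A_n| = \sup_n|A_n|$ as the supremum is infinite. Since the cardinals $|A_n|$ strictly increase, this supremum is the limit of a strictly increasing $\omega$-sequence, whence $\cf(|A|)=\omega$. Thus a proper chain model can exist only when its universe has cardinality of countable cofinality.

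For (3), let $(A_n)_{n<\omega}$ be a chain model of strict power at most $\kappa$, so $|A_n|<|A|$ for each $n$ and $|A|\le\kappa$. As in (2), $|A|=\sup_n|A_n|$, and since each $|A_n|<|A|$ this supremum is not attained. I would build a subsequence $\langle k_n:n<\omega\rangle$ by recursion: set $k_0=0$, and given $k_n$, use $|A_{k_n}|<|A|=\sup_m|A_m|$ to find $m>k_n$ with $|A_m|>|A_{k_n}|$, and let $k_{n+1}=m$. Then $|A_{k_n}|<|A_{k_{n+1}}|$, so $(A_{k_n})_{n<\omega}$ is proper; and since $k_{n+1}>k_n$ always, $k_n\to\infty$, giving $\bigcup_n A_{k_n}=\bigcup_n A_n=A$, so the underlying model is unchanged. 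Finally the identity on $A$ is a chain isomorphism between $(A_n)_{n<\omega}$ and $(A_{k_n})_{n<\omega}$ in the sense of Definition \ref{chainour}: any $A_m$ is contained in $A_{k_n}$ for $k_n\ge m$, and each $A_{k_n}$ is already a member of the original chain; hence $(A_n)_{n<\omega}\cong^{c}(A_{k_n})_{n<\omega}$.

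The only step needing care is the bookkeeping in (3): the $k_n$ must be chosen so that the cardinalities strictly increase while the indices still tend to infinity, because it is precisely the cofinality of the subsequence in $\omega$ that guarantees both that the union is preserved and that the identity satisfies the two containment conditions defining chain isomorphism. The recursion above secures this, using that $|A|=\sup_n|A_n|$ is not attained.
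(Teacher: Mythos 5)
Your proof is correct and coincides with the route the paper intends: the paper states Observation \ref{variouschainmodels} without proof, treating it as immediate from Definitions \ref{chainour} and \ref{chainournew}, and the statement of part (3) itself prescribes exactly your construction, namely passing to a subsequence $(A_{k_n})_{n<\omega}$ with strictly increasing cardinalities. Your supporting details --- that $|A|=\sup_n|A_n|$, that this supremum is not attained under the strict-power hypothesis so the recursion can always continue with $k_{n+1}>k_n$, and that the identity map then satisfies both containment conditions for a chain isomorphism --- are precisely the routine verifications the paper leaves to the reader.
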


We give a simple example showing that the truth in chain models depends on the choice of the chain, namely we produce two chain models $(A_n)_{n<\omega}$ and 
$(B_n)_{n<\omega}$ with $\bigcup_{n<\omega}A_n=\bigcup_{n<\omega}B_n$, and a sentence $\varphi$ true in 
 $(A_n)_{n<\omega}$ but false in $(B_n)_{n<\omega}$.

\begin{example}\label{dependenceonchain}  
Let $\LL=\{<\}$, where $<$ is a binary relation symbol. Let $\mathfrak A$ be the model of $\LL$ 
represented by the lexicographic sum of $\langle \kappa_n:\, n<\omega^\ast\rangle$, so each $\kappa_n$ is ordered as the corresponding ordinal and for $m<n<\omega$ we have that the block $\kappa_m$ is above the block 
$\kappa_n$. We can see this sum as consisting of pairs $(\alpha, n)$ where $n<\omega$ and $\alpha<\kappa_n$, ordered by $(\alpha, n)<(\beta, m)$ if $m<n$ or if $m=n$ and $\alpha<\beta$.

Let us choose a sequence $\langle \alpha_n:\, n<\omega\rangle$ so that $\alpha_n\in \kappa_n$,
hence the sequence $\langle (\alpha_n,n):\,n<\omega\rangle$ is $<$-decreasing in $\mathfrak A$.
Let $\varphi$ be the $L_{\kappa,\kappa}$ sentence
\begin{equation}
(\exists x_0) (\exists x_1) \ldots (\exists x_n)  \ldots \bigwedge_{n<\omega} (x_{n+1} < x_n).
\end{equation}
Let $A_0=\{(\alpha_n,n):\,n <\omega\}\cup \kappa_0 \times \{0\}$ and for $n>0$ let 
$A_n= \kappa_n \times \{n\}\cup \bigcup_{k<n} A_k$. For $n<\omega$ let $B_n=\bigcup_{k\le n} \kappa_k \times \{k\}$.

Then $(A_n)_{n<\omega}\models^c \varphi$ while $(B_n)_{n<\omega}\models^c \neg \varphi$.
\end{example}

\begin{convention} (1) Let
${\mathfrak M}^{c}$ stand for the class of proper chain models and let ${\mathfrak M}^{c, \ast}$ stand for the class of all weak chain models. 

{\noindent (2)} The class of ordinary models is denoted by $\mathcal M$ and the class of ordinary models of size $\lambda$ by $\mathcal M_\lambda$ .

{\noindent (3)} Let $L^c_{\lambda, \kappa}=(L_{\lambda,\kappa}, \models^c, {\mathfrak M}^{c})$ and let
$L^{c,\ast}_{\lambda, \kappa}=(L_{\lambda,\kappa}, \models^c, {\mathfrak M}^{c,\ast})$.

{\noindent (4)} We use the usual notation $L_{\lambda,\theta}$ to denote the logic $(L_{\lambda,\theta}, \models,
\mathcal M)$.
\end{convention}

\begin{observation}\label{wearenice} Modulo replacing the closure under isomorphisms in the last item of Definition \ref{nicelogic} by the closure under chain isomorphisms, the logics $L^c_{\kappa, \kappa}$ and 
$L^{c,\ast}_{\kappa, \kappa}$ are nice logics.
\end{observation}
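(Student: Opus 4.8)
The plan is to verify each of the clauses in Definition \ref{nicelogic} directly for the two logics $L^c_{\kappa,\kappa}$ and $L^{c,\ast}_{\kappa,\kappa}$, using the fact that their syntax is literally the syntax of $L_{\kappa,\kappa}$ (which is already a nice logic) and that the only change is in the satisfaction relation, which is now $\models^c$ on (proper or weak) chain models. The first four clauses — that atomic sentences $P[c_0,\ldots,c_{n-1}]$ are sentences, that $L$ is closed under negation, conjunction and disjunction, and the clauses governing $\neg$ and $\wedge$ (and dually $\vee$) — concern only the syntactic class $L=L_{\kappa,\kappa}$ together with the quantifier-free behaviour of $\models^c$. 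Since the syntax is unchanged and, as noted just before equation (\ref{newtruth}), $\models^c$ agrees with the ordinary $\models$ on quantifier-free sentences, these clauses transfer verbatim from the niceness of $L_{\kappa,\kappa}$. The one subtlety is the negation clause applied to arbitrary sentences: I would observe that the inductive truth definition of $\models^c$ treats $\neg$ classically at the top level, so that $(A_n)_{n<\omega}\nmodels^c\varphi$ iff $(A_n)_{n<\omega}\models^c\neg\varphi$, exactly as required.

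The essentially new clause to check is the existential one. Here I would simply read off the defining equation (\ref{newtruth}): $(A_n)_{n<\omega}\models^c(\exists\bar x)\psi(\bar x)$ holds precisely when there exist $n<\omega$ and $\bar a\in A_n^{<\kappa}$ with $(A_n)_{n<\omega}\models^c\psi[\bar a]$. Since $A=\bigcup_{n<\omega}A_n$, the parameter $\bar a$ ranges over tuples from the universe $A$ of the underlying model, so the two directions of the niceness requirement — that the existence of a witness $a\in M$ with $M\models_{\mathfrak L}\psi[a]$ yields $M\models_{\mathfrak L}(\exists x)\psi(x)$, and conversely — are immediate from (\ref{newtruth}), noting that a single-variable existential is the case $\bar x=\langle x\rangle$. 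I expect no obstacle here beyond unwinding the definition; the content is entirely in (\ref{newtruth}).

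The final clause, closure under (chain) isomorphism, is where the statement's explicit caveat does the work, and I expect this to be the main point requiring care. For the ordinary notion of isomorphism the clause can fail, precisely because Example \ref{dependenceonchain} shows that two chains on the same underlying model — which are isomorphic as ordinary models via the identity — can disagree on a sentence $\varphi$. So I would instead prove invariance under chain isomorphism (Definition \ref{chainour}): if $f:\mathfrak A\to\mathfrak B$ is a chain isomorphism, meaning $f$ is an isomorphism of the underlying models such that each $A_n$ maps into some $B_m$ and each $B_n$ pulls back into some $A_m$, then $(A_n)_{n<\omega}\models^c\varphi[\bar a]$ iff $(B_n)_{n<\omega}\models^c\varphi[f(\bar a)]$. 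This goes by induction on $\varphi$. The atomic and boolean cases use that $f$ is an ordinary isomorphism, together with the quantifier-free agreement of $\models^c$ with $\models$. The existential case is exactly where the chain-isomorphism conditions are needed: a witness $\bar a\in A_n$ for $\varphi$ on the $\mathfrak A$-side is carried by $f$ into some $B_m$ (by the forward condition on $f$), giving a witness $f(\bar a)\in B_m$ on the $\mathfrak B$-side, and the converse direction uses the pullback condition on $f^{-1}$. The cofinal interleaving of the two chains guaranteed by chain isomorphism is precisely what makes the existential quantifier respect the correspondence, so the induction closes.

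I would remark that the argument is uniform in the two cases: for $L^{c,\ast}_{\kappa,\kappa}$ one works with the class $\mathfrak M^{c,\ast}$ of all weak chain models, for $L^c_{\kappa,\kappa}$ with the class $\mathfrak M^c$ of proper chain models, but neither the truth definition (\ref{newtruth}) nor the inductive verification distinguishes between them, since the cardinality restrictions defining proper chain models play no role in checking the niceness clauses. Hence the same induction establishes both assertions simultaneously, which is what the Observation claims.
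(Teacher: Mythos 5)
Your proof is correct, and it supplies exactly the routine verification that the paper leaves implicit: Observation \ref{wearenice} is stated there without proof, and the intended argument is precisely your clause-by-clause check of Definition \ref{nicelogic} together with the induction on formula complexity showing that $\models^c$ is invariant under chain isomorphism (the forward and pullback conditions handling the existential step), with Example \ref{dependenceonchain} correctly identified as the reason ordinary isomorphism invariance must be weakened. One phrase deserves tightening: when you say the parameter $\bar{a}$ ``ranges over tuples from the universe $A$'', note that for tuples of infinite length $<\kappa$ containment in a single $A_n$ is a genuine restriction --- this is the whole point of chain semantics --- and your argument is sound only because the existential clause of Definition \ref{nicelogic} concerns a single witness $a\in M$, and every single element of $\bigcup_{n<\omega} A_n$ lies in some $A_n$, which is what your reduction to the case $\bar{x}=\langle x\rangle$ in effect uses.
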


\subsection{Comparison with $L_{\kappa, \omega}$ and $L_{\kappa, \kappa}$}
We start with the following easy observation. 

\begin{observation}[Karp] \label{comparison}

{\noindent (1)} Every formula of the logic $L_{\lambda, \omega}$ is a formula of $L^c_{\lambda, \kappa}$ and equally of  $L^{c,\ast}_{\lambda, \kappa}$ and on such formulas the notions of $\models$ and $\models^c$ agree. Formulas of $L_{\lambda, \kappa}$ are also formulas of 
$L^c_{\lambda, \kappa}$, but then the notions  $\models$ and $\models^c$ do not necessarily agree.

{\noindent (2)} Suppose that $\varphi$ is a sentence in $L_{\lambda, \omega}$. Let $\mathfrak A$ be a (weak, proper) chain model with a decomposition
$(A_n)_{n<\omega}$. Then $\mathfrak A$ is a model of $\varphi$ iff $(A_n)_{n<\omega}$ is a (weak, proper) chain
model of $\varphi$.
\end{observation}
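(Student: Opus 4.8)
The plan is to prove Observation~\ref{comparison} by carefully unwinding the two truth definitions and exploiting the crucial fact that in $L_{\lambda,\omega}$ quantifier strings have length $<\omega$, i.e.\ are finite. For part~(1), the syntactic inclusion is immediate: a sentence of $L_{\lambda,\omega}$ uses conjunctions/disjunctions of length $<\lambda$ and finite blocks of quantifiers, so it is in particular a sentence of $L_{\lambda,\kappa}$ (since $\omega\le\kappa$), hence a sentence of $L^c_{\lambda,\kappa}$ and of $L^{c,*}_{\lambda,\kappa}$, which share the same syntax. The agreement of $\models$ and $\models^c$ on such formulas is the content of part~(2), so I would prove the two parts together, treating the real work in~(2).

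For part~(2) I would argue by induction on the complexity of $\varphi$, following the inductive clauses of the modified truth definition. The atomic and quantifier-free cases are handled by the stipulation, already recorded in the text preceding equation~(\ref{newtruth}), that $\models$ and $\models^c$ agree on quantifier-free sentences; negation, conjunction and disjunction pass through by the inductive hypothesis together with the fact (niceness, Observation~\ref{wearenice}) that both satisfaction relations commute with the boolean connectives in the obvious way. The only genuinely substantive case is the existential quantifier. Here I use that in $L_{\lambda,\omega}$ the quantifier prefix $\exists\bar x$ has $\bar x$ of finite length; so the clause~(\ref{newtruth}) reads $(A_n)_{n<\omega}\models^c\exists\bar x\,\varphi(\bar x)$ iff there exist $n<\omega$ and a finite tuple $\bar a\in A_n^{<\omega}$ with $(A_n)_{n<\omega}\models\varphi[\bar a]$.

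The heart of the argument is then the following equivalence, which is exactly where countable cofinality of the chain and finiteness of the tuple combine. If $\mathfrak A=\bigcup_{n<\omega}A_n\models\exists\bar x\,\varphi(\bar x)$ in the ordinary sense, a witnessing finite tuple $\bar a$ has all its finitely many coordinates landing in $\bigcup_n A_n$; since the $A_n$ are increasing and $\bar a$ is finite, all coordinates already lie in a single $A_n$, so the chain clause is satisfied. Conversely, if some $A_n$ supplies a witness, that witness is a fortiori a tuple from $A$, so $\mathfrak A\models\exists\bar x\,\varphi$. Combined with the inductive hypothesis that $\models$ and $\models^c$ agree on the quantifier-free matrix $\varphi(\bar x)$ evaluated at parameters from $A_n$ (which, by the quantifier-free case, coincide whether read in $A_n$ or in $\mathfrak A$, since quantifier-free truth of an $L_{\lambda,\omega}$ formula at a fixed tuple depends only on the submodel generated by that tuple and $\tau$ is relational), this closes the induction.

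The main obstacle, and the point I would be most careful about, is the existential step: one must see that the ``there is $n$'' in clause~(\ref{newtruth}) genuinely matches ordinary satisfaction, and this works \emph{only} because the quantifier block is finite (true in $L_{\lambda,\omega}$ but false in $L_{\lambda,\kappa}$ for $\kappa>\omega$, which is precisely why part~(1) warns that $\models$ and $\models^c$ need not agree on $L_{\lambda,\kappa}$-sentences, as illustrated by Example~\ref{dependenceonchain}). A subtle bookkeeping point is that nested existentials must all be absorbed into one $A_n$; finiteness of each prefix plus the increasing chain handles this, but if there are infinitely many quantifiers overall along the formula tree one relies on the fact that each individual quantifier node only demands membership in \emph{some} $A_n$ and the inductive hypothesis already supplies satisfaction of the subformula in the chain model, so no simultaneous choice of a single $n$ across infinitely many nodes is needed. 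I would make this explicit to avoid a spurious appeal to a uniform bound.
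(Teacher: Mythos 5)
Your proof is correct and takes essentially the same route as the paper's: induction on the complexity of $\varphi$, with the quantifier-free base case settled by the stipulation that $\models$ and $\models^c$ agree there, and the existential step resting on the fact that a finite witnessing tuple lies in a single level of the increasing chain (the paper's proof is just a terser rendering of this, and part~(1) is likewise dismissed as ``by definition''). The only nit is your phrase ``quantifier-free matrix'': the subformula under the quantifier is merely of lower complexity, not necessarily quantifier-free, but your own surrounding discussion of nested quantifiers already treats the general case correctly, so nothing is missing.
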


\begin{proof} (1) By definition.

{\noindent (2)} By induction on the complexity of $\varphi$. If $\varphi$ is quantifier-free then the definition of 
$\models^c\varphi$ is
the same as that of $\models \varphi$. Suppose that $\phi\equiv(\exists x)\psi(x)$. If 
$\mathfrak A\models \varphi$ then there is a witness $x$ in some $A_n$ and hence 
$(A_n)_{n<\omega} \models^c\varphi$. The other direction is similar. 
$\eop_{\ref{comparison}}$
\end{proof}

The following Theorem \ref{lowerbound} is  the reason that  (weak) chain logic is not $\kappa$-compact, as we shall see in Corollary \ref{notcompact}.

\begin{theorem}\label{lowerbound}  
(1) $L_{\kappa, \omega}  \le L^{c,\ast}_{\kappa,\kappa}$.

{\noindent (2)} If $\kappa$ is a strong limit cardinal then $(L_{\kappa, \omega},\models, {\mathcal M}_{\ge \kappa})  \le L^{c}_{\kappa,\kappa}$, where ${\mathcal M}_{\ge\kappa}$ stands for models of size $\ge\kappa$.

{\noindent (3)} $(L^{c,\ast}_{\kappa,\kappa}, \models^c, {\mathcal M}^c_{ \kappa}) \le L^{c}_{\kappa,\kappa}$, where ${\mathcal M}^c_{\ge\kappa}$ stands for weak chain models of size $\ge\kappa$.
\end{theorem}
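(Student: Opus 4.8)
The plan is to treat all three inequalities uniformly as instances of Definition \ref{def:lesseq}, exhibiting in each case a Chu transform $(f,g)$ whose second coordinate has dense range. The sentence map $f$ will always be essentially the identity inclusion of formulas, legitimate by Observation \ref{comparison}(1), since every $L_{\kappa,\omega}$-formula is an $L_{\kappa,\kappa}$-formula and, in parts (1)--(2), $\models$ and $\models^c$ agree on $L_{\kappa,\omega}$-formulas. The model map $g$ will send a chain model to its union in parts (1)--(2) and will be the inclusion of proper chain models among weak chain models in part (3); adjointness then reduces either to Observation \ref{comparison}(2) or to the triviality that $\models^c$ of a fixed model against a fixed formula is insensitive to whether the model is regarded as proper or weak. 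For (1) concretely, take $f$ the inclusion $L_{\kappa,\omega}\hookrightarrow L_{\kappa,\kappa}$ and $g\colon\mathfrak M^{c,\ast}\to\mathcal M$, $g((A_n)_{n<\omega})=\bigcup_{n<\omega}A_n$; adjointness (for $\varphi\in L_{\kappa,\omega}$: $(A_n)_{n<\omega}\models^c\varphi$ iff $\bigcup_n A_n\models\varphi$) is exactly Observation \ref{comparison}(2), and for density any ordinary model $\mathfrak A\models\varphi$ is recovered by $g$ from the constant weak chain model $A_n=A$ (Observation \ref{variouschainmodels}(1)). Parts (2) and (3) follow the same template, differing only in the restricted model classes, which is where the work concentrates.

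For (2), keep $f$ as above and let $g$ again be the union map, now from the proper chain models $\mathfrak M^c$ into $\mathcal M_{\ge\kappa}$; adjointness is again Observation \ref{comparison}(2). Density is the crux: given $\varphi\in L_{\kappa,\omega}$ with a model of size $\ge\kappa$, I must exhibit a proper chain model whose union satisfies $\varphi$. I would first run a downward L\"owenheim--Skolem argument for the single sentence $\varphi$, closing a $\kappa$-sized subset of the given model under witnesses for the fewer-than-$\kappa$ subformulas of $\varphi$, to obtain a model $M\models\varphi$ of size exactly $\kappa$; it is in controlling this hull that the strong limit hypothesis is used, keeping the relevant count (essentially $2^{<\kappa}=\kappa$) down at $\kappa$ despite $\kappa$ being singular. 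Since $\mathrm{cf}(\kappa)=\omega$, I then write the universe of $M$ as an increasing union $\bigcup_n A_n$ with $|A_n|=\kappa_n$ along the fixed cofinal sequence, producing a proper chain model of union size $\kappa\ge\kappa$ whose union is $M$, which witnesses density.

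For (3), take $f$ the identity on $L_{\kappa,\kappa}$ and $g$ the inclusion of proper chain models among weak chain models (into the codomain $\mathcal M^c_{\ge\kappa}$, for which we keep attention on those of size $\ge\kappa$); adjointness is immediate, as both sides read $M'\models^c\varphi$ for the same model $M'$. Density asks, for each $\varphi\in L_{\kappa,\kappa}$ that is $\models^c$-satisfiable by a weak chain model of size $\ge\kappa$, for a proper chain model $\models^c$-satisfying $\varphi$. Here I would pass, by a chain substructure argument applied to the single sentence $\varphi$, to a witnessing weak chain model of strict power $\kappa$ (all levels of size $<\kappa$, union of size $\kappa$), and then invoke Observation \ref{variouschainmodels}(3) to replace it by a chain-isomorphic proper chain model; invariance of $\models^c$ under chain isomorphism (Observation \ref{wearenice}) preserves satisfaction of $\varphi$.

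The main obstacle throughout is the density condition and the cardinal bookkeeping it forces. In (2) one must manufacture a model of size exactly $\kappa$, a cardinal of countable cofinality, from an arbitrary model of $\varphi$ of size $\ge\kappa$; this is delicate precisely because $\kappa$ is singular, so the blunt downward L\"owenheim--Skolem to $\kappa$ for full $L_{\kappa,\omega}$-elementarity is unavailable ($\kappa^{<\kappa}>\kappa$), forcing the localization to the single sentence $\varphi$ and the use of the strong limit hypothesis. In (3) one must reshape a given weak chain into a proper one without disturbing the chain-truth of $\varphi$, which Example \ref{dependenceonchain} shows is not automatic under naive reindexing; routing through strict power $\kappa$ and Observation \ref{variouschainmodels}(3) is what makes this safe. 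The definitions of $f$ and $g$ and the adjointness checks are, by contrast, routine given Observation \ref{comparison}.
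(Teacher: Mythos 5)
Your parts (1) and (2) coincide with the paper's own proof: the same pair $(f,g)$ ($f$ the inclusion of formulas, $g$ the union map $(M_n)_{n<\omega}\mapsto\bigcup_n M_n$), adjointness via Observation \ref{comparison}(2), density in (1) from surjectivity of $g$, and density in (2) from a downward L\"owenheim--Skolem theorem for $L_{\kappa,\omega}$ at strong limit $\kappa$. The only difference there is that the paper simply cites this as Lemma \ref{DLS} (Theorem 3.4.1 of Dickmann) where you sketch its hull proof; your final step of slicing the resulting size-$\kappa$ model into a proper chain along $\langle\kappa_n:n<\omega\rangle$ is exactly the paper's remark that any size-$\kappa$ model admits a proper chain representation satisfying the same $L_{\kappa,\omega}$-sentences.

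In (3) your skeleton also agrees with the paper ($f$ and $g$ identities, density via a downward L\"owenheim--Skolem theorem for chain logic plus Observation \ref{variouschainmodels}(3)), but there is one concrete gap: you assert that a ``chain substructure argument'' yields a witnessing chain model of strict power with union of size \emph{exactly} $\kappa$. The available downward theorem for chain logic --- Corollary 2.5 of Cunningham, which is what the paper invokes --- only produces a strict chain model of power \emph{at most} $\kappa$, and if the resulting model has size $<\kappa$ then $g(s')$ does not lie in $\mathcal{M}^c_{\ge\kappa}$ and the density condition is not witnessed. The paper pins the size from below as follows: since the given weak chain model has size $\ge\kappa$, each sentence $\theta_n$ asserting the existence of $\kappa_n$ pairwise distinct elements holds in it, so one applies Cunningham's theorem to $\varphi\wedge\bigwedge_{n<\omega}\theta_n$, which forces the strict chain model to have power exactly $\kappa$. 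You either need this trick, or you must arrange your hull to be seeded with $\kappa$ many elements (say $\kappa_n$ of them at level $n$), which amounts to reproving Cunningham's theorem with a lower cardinality bound; as written, the step is unsupported. On the credit side, your explicit appeal to Observation \ref{variouschainmodels}(3) together with chain-isomorphism invariance to pass from a strict to a proper chain model is, if anything, slightly more careful than the paper's wording at the corresponding point.
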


\begin{proof} (1) Let $f$ be the identity function and let $g((M_n)_{n<\omega})=\bigcup_{n<\omega} M_n$. Notice that $g$ is onto.
By Observation \ref{comparison} we have that $(f,g)$ is a Chu transform witnessing the announced inequality.

\smallskip

{\noindent (2)} The 
pair $(f,g)$ as above will still be a Chu transform, but it is not immediate that $g$ satisfies the density condition as it now acts only on proper chain models, so models of size 
$\kappa$ or some other singular cardinal of countable cofinality. The conclusion uses a downward Lowenheim-Skolem theorem for $L_{\kappa, \omega}$ that holds in the following form and which uses the assumption that 
$\kappa$ is a strong limit (see Theorem 3.4.1 in \cite{Dickmann}):

\begin{lemma}\label{DLS} Assume that $\kappa$ is a strong limit cardinal. Then any sentence $\varphi$ of $L_{\kappa, \omega}$ that has a model of size $\ge\kappa$ also has a model of size $\kappa$.
\end{lemma}

The lemma immediately implies the density of $g$, since any model of $L_{\kappa, \omega}$ of size $\kappa$ can be represented as a proper chain model and the two will satisfy the same sentences by Observation \ref{comparison}(2).

\smallskip

{\noindent (3)} This time we take both $f$ and $g$ to be the identity functions. The adjointness property is easily verified, but the density condition needs an argument. It will be provided by Corollary 2.5 of \cite{Cunnigham}, which is a Downward Lowenheim-Skolem theorem for the chain logic. It states that every $L_{\kappa, \kappa}$ -sentence which has a chain model, must have a strict chain model of power at most $\kappa$. 

So suppose that an $L_{\kappa, \kappa}$-sentence $\varphi$ has a chain model of $(A_n)_{n<\omega}$ of power at least $\kappa$, but this model is not necessary a strict chain model of size $\kappa$. Notice that $|A_n|$ must converge at least to
$\kappa$, therefore for any sequence of cardinals $\langle \kappa_n:\, n<\omega\rangle$ with supremum 
$\kappa$, for every $n$, there will be $A_m$ with $|A_m|\ge \kappa_n$. Hence the sentence
\[ 
\theta_n\equiv (\exists_{i<\kappa_n} x_i) \forall_{i,j<\omega} [x_i\neq x_j]
\] 
is true in the chain model $(A_n)_{n<\omega}$ and so is the conjunction $\theta\equiv \bigwedge_{n<\omega} \theta_n$. This shows that the sentence $\varphi\wedge \theta$ has a chain model and hence by the Downward Lowenheim-Skolem theorem for the chain logic, we have that $\varphi\wedge \theta$
 must have a strict chain model $(B_n)_{n<\omega}$ of power at most $\kappa$. The definition of 
 of $\theta_m$ implies that for every $n$, the model $(B_n)_{n<\omega}$ must have size at least $\kappa_m$. Putting this together, we get that $(B_n)_{n<\omega}$ has power exactly $\kappa$, and therefore 
 $(B_n)_{n<\omega}=g((B_n)_{n<\omega})$ is in the range of $g$ and a model of $\varphi$.
$\eop_{\ref{lowerbound}}$
\end{proof}

\section{Shelah's $L^1_\kappa$}\label{sec:Shelah}
In this section we recall the definition and basic properties of Shelah's logic $L^1_\kappa$ as defined  in \cite{Sh797}. Throughout this section we  assume that $\kappa=\beth_\kappa$, not necessarily singular.

Before defining $L^1_\kappa$ we define its elementary equivalence by means of a game. 
The game is called the ``borrowing game", $BG$,  because the partitions into $\omega$ many blocks given by the functions $h_n$ let us ``borrow" the time in which we have to ``repay" the promise of constructing a partial isomorphism.

\begin{definition}[\cite{Sh797}] \label{DG} Let $\beta$ be an ordinal and $\theta$ a cardinal. When we define $L^1_\kappa$ we assume  $\theta<\kappa$ and $\beta<{\theta}^+$. Let $\ma$ and $\mb$ be models of some fixed vocabulary $\tau$. \footnote{These models are typically much bigger than $\theta$ but not necessarily smaller than $\kappa$.}

The game
$$\dg^\beta_\theta(\ma,\mb)$$
is defined as follows:
\begin{enumerate}
\item Player I picks $\beta_0<\beta$, $\theta_0\le \theta$ and $A_0\in[A]^{\theta_0}$.
\item Player II picks $h_0:\,A_0\to\omega$ and a partial isomorphism  $g_0:h_0^{-1}(0)\to B$.
\item Player I picks $\beta_1<\beta_0$ and $B_1\in[B]^{\theta_0}$.
\item Player II picks $h_1:\, B_1\to\omega$ and a partial isomorphism 
$g_1\supseteq g_0$ such that $h_1^{-1}(0)\subseteq \ran(g_1)$ and $h_0^{-1}(1)\subseteq \dom(g_1)$.
\item Player I picks $\beta_2<\beta_1$ and $A_2\in[A]^{\theta_0}$.
\item Player II picks $h_2:\,A_2\to\omega$ and a partial isomorphism $g_2\supseteq g_1$ such that $h_2^{-1}(0)\subseteq \dom(g_2)$, $h_1^{-1}(1)\subseteq \ran(g_2)$ and $h_0^{-1}(2)\subseteq \dom(g_2)$.
\item ...
\item Eventually $\beta_n=0$ and the game ends. Player II wins if she can play to the end. Otherwise Player I wins.
\end{enumerate}

Let $\ma\sim_{\beta,\theta} \mb$
iff Player II has a winning strategy in $\dg^\beta_\theta(\ma,\mb)$. 

We define the relation (of $\ma$ and $\mb$) $$\ma\equiv^\beta_\theta\mb$$
to be the transitive closure of $\sim_{\beta,\theta}$. 
\end{definition}

\begin{observation}[\cite{Sh797}]\label{refinement} (1) For a fixed $\theta$ and $\beta\le\beta'<\theta^+$, the relation $\sim_{\beta',\theta}$
refines the relation  $\sim_{\beta,\theta}$, that is $\ma\sim_{\beta',\theta} \mb$ implies that $\ma\sim_{\beta,\theta} \mb$, for any models $\ma$ and $\mb$ of vocabulary $\tau$. The relation $\equiv^{\beta'}_\theta$ refines $\equiv^\beta_\theta$, that is $\ma\equiv^{\beta'}_\theta \mb$ implies that $\ma \equiv^\beta_\theta \mb$ for any models $\ma$ and $\mb$ of vocabulary $\tau$.

{\noindent (2)} If $\theta\le\theta'$ then for every $\beta<\theta^+$ the relation $\sim_{\beta,\theta'}$
refines the relation  $\sim_{\beta,\theta}$, that is $\ma\sim_{\beta,\theta'} \mb$ implies that $\ma\sim_{\beta,\theta} \mb$ for any models $\ma$ and $\mb$ of vocabulary $\tau$. The relation $\equiv^\beta_{\theta'}$ refines $\equiv^\beta_\theta$, that is $\ma\equiv^\beta_{\theta' }\mb$ implies that $\ma \equiv^\beta_\theta \mb$ for any models $\ma$ and $\mb$ of vocabulary $\tau$.

\end{observation}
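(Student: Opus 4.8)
The plan is to treat both parts uniformly. The key observation is that raising the relevant parameter (either $\beta$ or $\theta$) only enlarges the set of moves available to Player~I in the opening, while leaving untouched both the constraints on Player~II's moves and the winning condition. Consequently every play of the game with the smaller parameter is, verbatim, a legal play of the game with the larger parameter, so a winning strategy for Player~II in the larger game restricts to one in the smaller game. The clauses about $\equiv^\beta_\theta$ then follow from those about $\sim_{\beta,\theta}$ by monotonicity of the transitive closure.

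For part (1), fix $\theta$ and $\beta\le\beta'<\theta^+$ and suppose $\ma\sim_{\beta',\theta}\mb$, so Player~II has a winning strategy $\sigma$ in $\dg^{\beta'}_\theta(\ma,\mb)$. I would first note that $\beta$ enters the definition of the game only through the requirement $\beta_0<\beta$ on Player~I's opening move; every later move of Player~I is constrained only by $\beta_{n+1}<\beta_n$, independently of $\beta$. Hence any opening move $(\beta_0,\theta_0,A_0)$ that is legal in $\dg^\beta_\theta(\ma,\mb)$ satisfies $\beta_0<\beta\le\beta'$ and so is legal in $\dg^{\beta'}_\theta(\ma,\mb)$, and every subsequent position of the $\beta$-game is a position of the $\beta'$-game. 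Let Player~II answer in $\dg^\beta_\theta(\ma,\mb)$ by the responses dictated by $\sigma$. Each such response is legal, since the conditions on Player~II's moves (namely $g_n\supseteq g_{n-1}$ together with the requirements on the sets $h_m^{-1}(\cdot)$ relative to $\dom$ and $\ran$) make no reference to $\beta$; moreover the game terminates at the same step, when some $\beta_n=0$. As $\sigma$ never gets stuck, Player~II wins $\dg^\beta_\theta(\ma,\mb)$, giving $\ma\sim_{\beta,\theta}\mb$.

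For part (2), fix $\beta$ and $\theta\le\theta'$, and note that $\beta<\theta^+\le(\theta')^+$. The argument is the same: $\theta$ enters the game only through the constraint $\theta_0\le\theta$ on the common size $\theta_0$ of the sets $A_0,B_1,A_2,\dots$ that Player~I selects. Any choice with $\theta_0\le\theta$ also satisfies $\theta_0\le\theta'$, and all remaining constraints — in particular that every $A_{2i}$ and every $B_{2i+1}$ has the fixed cardinality $\theta_0$ — are identical in the two games. Thus every play of $\dg^\beta_\theta(\ma,\mb)$ is a legal play of $\dg^\beta_{\theta'}(\ma,\mb)$, and a winning strategy for Player~II in the latter restricts to one in the former, yielding $\ma\sim_{\beta,\theta'}\mb\Rightarrow\ma\sim_{\beta,\theta}\mb$.

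For the statements about $\equiv$ I would use that the transitive closure operation is monotone under inclusion of relations: if $R\subseteq R'$ then any finite $R$-chain is also an $R'$-chain, so the transitive closure of $R$ is contained in that of $R'$. Applying this to $\sim_{\beta',\theta}\subseteq\sim_{\beta,\theta}$ gives that $\equiv^{\beta'}_\theta$ refines $\equiv^\beta_\theta$, and to $\sim_{\beta,\theta'}\subseteq\sim_{\beta,\theta}$ gives that $\equiv^\beta_{\theta'}$ refines $\equiv^\beta_\theta$. The only point requiring care — bookkeeping rather than a genuine obstacle — is verifying that the restricted strategy is everywhere defined, i.e.\ that each position reached against Player~II's copied play in the smaller game really is a position of the larger game on which $\sigma$ is defined. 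I do not anticipate any deeper difficulty, since neither the partial isomorphisms nor the winning condition depends on the monotone parameter.
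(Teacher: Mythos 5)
Your proposal is correct and follows essentially the same route as the paper, which observes in one line that any play of $\dg^\beta_\theta(\ma,\mb)$ is a play of $\dg^{\beta'}_\theta(\ma,\mb)$ (and similarly for $\theta\le\theta'$), so a winning strategy for Player~II restricts, with the $\equiv$-clauses following by monotonicity of transitive closure. Your version merely spells out the bookkeeping (which parameter constrains which move, legality of Player~II's copied responses, and the observation $\beta<\theta^+\le(\theta')^+$) that the paper leaves implicit.
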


\begin{proof} (1) Any play of $\dg^\beta_\theta(\ma,\mb)$ is a play of $\dg^{\beta'}_\theta(\ma,\mb)$, so the first 
conclusion follows. The second conclusion follows from the first. The statement (2) is proved similarly.
$\eop_{\ref{refinement}}$
\end{proof}

It is not known whether $\sim_{\beta,\theta}$ is an equivalence relation, although we suspect that it is not.

\begin{definition}[\cite{Sh797}]\label{loksatisfaction} If $\beta<\theta^+$, then any union of equivalence classes of 
$\equiv^\beta_\theta$ is called a {\em sentence} of $L^1_{\theta}$. Let
$$L^1_\kappa=\bigcup_{\theta<\kappa}L^1_{\theta}.$$
We define the satisfaction relation in $L^1_\kappa$ by letting $\ma\satisfies_{L^1_\kappa} K$ if $\ma\in K$.
\end{definition}

Note that $L^1_\kappa$ does not (a priori) have a syntax, we just have a semantical description. The definition of sentences is
such that every sentence $K$ is a proper class of models. The question arises how to define the basic logical operations, especially  negation. The following theorem is helpful. 

\begin{theorem}[Fact 2.4.(5) of \cite{Sh797}]\label{smallnumberofclasses} The equivalence relation
$\equiv^\beta_\theta$ has $\le \beth_{\beta+1}(\theta)$ equivalence classes.
\end{theorem}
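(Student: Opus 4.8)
The goal is to bound the number of equivalence classes of $\equiv^\beta_\theta$ by $\beth_{\beta+1}(\theta)$. Since $\equiv^\beta_\theta$ is the transitive closure of $\sim_{\beta,\theta}$, it suffices to bound the number of $\sim_{\beta,\theta}$-classes, because the transitive closure cannot increase the number of classes. So the plan is to count, for each model $\ma$, the amount of \emph{local information} that determines the $\sim_{\beta,\theta}$-class of $\ma$, and to show that this information can be coded by an object drawn from a set of size at most $\beth_{\beta+1}(\theta)$.

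The approach I would take is an induction on $\beta$, mirroring the recursive structure of the game $\dg^\beta_\theta$. The key observation is that whether Player II wins $\dg^\beta_\theta(\ma,\mb)$ depends only on a ``rank-type'' invariant of each model, computed by the following recursion. At each round Player I chooses some $\beta'<\beta$, a cardinal $\theta_0\le\theta$, and a set of at most $\theta_0$ elements; Player II must respond with a colouring into $\omega$ and an extension of a partial isomorphism, after which the game continues as a $\dg^{\beta'}_\theta$-type game on the extended data. The relevant invariant of $\ma$ is therefore a function assigning, to each possible ``position'' (a partial isomorphism together with the bookkeeping of the $h_n$-colourings, all of size $\le\theta$), the set of $\sim_{\beta',\theta}$-invariants reachable in one round. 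I would formalise this as an invariant $\mathrm{inv}_\beta(\ma)$ taking values in a set $I_\beta$, and prove by induction that $\ma\sim_{\beta,\theta}\mb$ is determined by $\mathrm{inv}_\beta(\ma)$ and $\mathrm{inv}_\beta(\mb)$, so that the number of $\sim_{\beta,\theta}$-classes is at most $|I_\beta|$.

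The counting then proceeds by tracking the cardinality $|I_\beta|$ through the recursion. A single round involves data of size $\le\theta$ (partial isomorphisms on $\le\theta$ elements, colourings $h_n\colon(\text{set of size}\le\theta)\to\omega$), so the number of possible ``round configurations'' relative to the vocabulary $\tau$ is at most $2^\theta=\beth_1(\theta)$. The invariant $\mathrm{inv}_\beta$ is a function from these round configurations into $I_{\beta'}$ for the various $\beta'<\beta$, i.e.\ roughly $|I_\beta|\le \bigl(\sup_{\beta'<\beta}|I_{\beta'}|\bigr)^{\beth_1(\theta)}$, possibly with an extra power-set step to account for Player II ranging over winning responses. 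Setting $|I_\beta|\le\beth_{\beta+1}(\theta)$, the successor step gives $\beth_{\beta+1}(\theta)^{\beth_1(\theta)}=\beth_{\beta+1}(\theta)$ (using that $\beth_{\beta+1}(\theta)$ is a strong limit of cofinality $>\theta$, or absorbing the exponent into one more $\beth$-step to land in $\beth_{(\beta+1)+1}$), and the limit step is handled by $\sup$. The constraint $\beta<\theta^+$ keeps the ordinal indices small enough that the iterated exponentiation stays inside $\beth_{\beta+1}(\theta)$.

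The main obstacle I anticipate is the bookkeeping in the successor step: I must verify that the extra power-set and exponentiation incurred at each round are absorbed by the single increment from $\beth_\beta$ to $\beth_{\beta+1}$, rather than accumulating one $\beth$-level per round. The saving comes from the fact that all the round data has size bounded uniformly by $\theta$ (by the definitions of $\theta_0\le\theta$ and $\beta<\theta^+$), so the exponents are always $\le 2^\theta=\beth_1(\theta)$, and $\beth_{\beta+1}(\theta)$ is closed under raising to such exponents. Making this absorption precise—especially confirming that transitive closure adds nothing and that the invariant genuinely determines the $\sim$-class (the harder direction, requiring one to reconstruct a winning strategy from equal invariants)—is where the real care is needed; the cardinal arithmetic itself is routine once the invariant is correctly set up.
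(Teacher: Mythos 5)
First, note that the paper does not actually prove this statement: it is quoted verbatim as Fact~2.4(5) of \cite{Sh797}, so the only comparison available is with Shelah's standard type-counting argument, which is indeed the strategy you are reaching for. Measured against that, your proposal has the right shape but a genuine gap at its centre: the invariant $\mathrm{inv}_\beta(\ma)$ is not well-defined as you describe it. A ``function assigning to each possible position (a partial isomorphism together with the colouring bookkeeping) the set of invariants reachable in one round'' is a function on positions \emph{of the particular model} $\ma$, and the collection of such positions (partial maps and $\le\theta$-sized subsets of an arbitrarily large model) has no size bound, so this object does not land in a fixed set $I_\beta$ of size $\le\beth_{\beta+1}(\theta)$. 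The repair is to quotient positions by types rather than track them: define by induction on $\beta$ the type $t^\beta(\ma,\bar a)$ of a tuple $\bar a$ of length $\le\theta$ as the set of pairs $(\beta',t^{\beta'}(\ma,\bar a\frown\bar b))$ for $\beta'<\beta$ and $\bar b\in[A]^{\le\theta}$, with base case the atomic type (at most $2^{\theta+\aleph_0}$ of those). Then the count is one power set per level: since $\beta<\theta^+$ there are $\le\theta$ levels below $\beta$, so the set of possible lower-rank types has size $\le\sum_{\beta'<\beta}\beth_{\beta'+1}(\theta)\le\beth_\beta(\theta)$, and taking subsets lands exactly in $\beth_{\beta+1}(\theta)$. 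This also repairs your cardinal arithmetic, which as justified is wrong: $\beth_{\beta+1}(\theta)=2^{\beth_\beta(\theta)}$ is a successor of the beth hierarchy, \emph{not} a strong limit. (The absorption $\beth_{\beta+1}(\theta)^{2^\theta}=\beth_{\beta+1}(\theta)$ does hold for $\beta\ge1$, but via $2^{\beth_\beta(\theta)\cdot 2^\theta}=2^{\beth_\beta(\theta)}$; and your fallback of ``absorbing into one more $\beth$-step'' would, if applied at each successor, break the induction hypothesis rather than save it.)

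Two further points. The lemma you defer as ``the real care'' --- reconstructing a winning strategy for II from equal invariants --- is easier than you fear, because the borrowing mechanism of $\dg^\beta_\theta$ only \emph{helps} II: she may always play $h_n\equiv 0$ and pay her debts immediately, and the game is finite since the ordinals $\beta_0>\beta_1>\cdots$ strictly decrease, so accumulated domains stay of size $\le\theta$. Maintaining the invariant $t^{\beta_n}(\ma,\bar a)=t^{\beta_n}(\mb,\bar b)$ along the play, each challenge is answered by unpacking one pair from the common type, and equality of types includes equality of atomic types, so the responses are partial isomorphisms; in particular no colouring bookkeeping belongs in the invariant at all. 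Finally, your opening reduction (``transitive closure cannot increase the number of classes'') is loosely phrased, since $\sim_{\beta,\theta}$ is not known to be an equivalence relation --- the paper remarks after Definition~\ref{DG} that it is suspected not to be --- so it has no ``classes''; what you need, and what the type argument gives, is only the single implication that equal types yield $\ma\sim_{\beta,\theta}\mb$: together with reflexivity of $\sim_{\beta,\theta}$ and $\sim_{\beta,\theta}\subseteq\equiv^\beta_\theta$, this bounds the number of $\equiv^\beta_\theta$-classes by the number of types, and the converse direction is irrelevant to the count.
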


Theorem \ref{smallnumberofclasses} in particular shows that the family of the $\equiv^\beta_\theta$ equivalence classes is a set, and we have defined sentences as unions of subfamilies of elements of such sets. This allows to define the logical operations $\neg$ and $\vee$ (and hence all the other logical operations), as follows:

\begin{definition}[\cite{Sh797}] \label{logoperation} Suppose that $K$ is a sentence of $L^1_\kappa$, so there are the least 
$\theta<\kappa$ and the least $\beta<\theta^+$ such that $K$ is a union of equivalence classes of 
$\equiv^\beta_\theta$. Let $\EE^\beta_\theta$ denote the set of all equivalence classes of 
$\equiv^\beta_\theta$. We define
\[
\neg K=\bigcup \{e\in \EE^\beta_\theta:\, e\notin K\}.
\]
For $1\le n<\omega$ and sentences $K_i \,(i<n)$, we find the least $\theta<\kappa$ and least $\beta<\theta^+$ such that each $K_i$ is a union of equivalence classes of 
$\equiv^\beta_\theta$ (such $\theta$, $\beta$ exist by Observation \ref{loksatisfaction}). We define
\[
\bigvee_{i<n} K=\bigcup K_i=\{e\in \EE^\beta_\theta:\, (\exists i<n)\,e\in K_i\}.
\]
\end{definition}

If $\kappa=\beth_\kappa$  is singular, $L^1_\kappa$ has Craig Interpolation and a Lindstr\"om-type characterisation involving a strong undefinability of well-order called SUDWO. Moreover, $L^1_\kappa$ is the classically maximal logic stronger than $\bigcup_{\lambda<\kappa} L_{\lambda,\omega}$ which satisfies SUDWO. These facts are all proved in \cite{Sh797}.

\section{Comparison between $L^1_\kappa$ and the chain logic}\label{sec:main}
Based on the EF games for chain logic (see \cite{singtrees}), we can define:

\begin{definition}\label{chainbeta} Suppose that $\ma=(C_n)_{n<\omega}$ and $\mb=(D_n)_{n<\omega}$ are chain models.
For a sequence 
$\xb$ of length $<\kappa$ we say that it is  \emph{bounded} if it is contained in some $C_n$ or $D_n$. For simplicity, assume $(\bigcup_nC_n)\cap(\bigcup_n D_n)=\emptyset$.

Let $\beta<\kappa^+$.
Define the game $EF^c_{\kappa,\beta}$ in which Player I
plays a sequence of bounded sequences $\xb_i$, each of length $<\kappa$, and ordinals 
$\beta_0>\beta_1>...$, where 
$\beta_0<\beta$ (so the game is finite) and II responds by an increasing sequence $g_i$ of partial chain isomorphisms
of cardinality $<\kappa$, 
including $\xb_i$ in its domain if $\xb_i$ is in $\ma$ or in its range if $\xb_i$ is in $\mb$. The first player that cannot move loses. 
We write $\ma\sim^{c}_{\kappa, \beta}\mb$  if Player II has a winning strategy in $EF^c_{\kappa,\beta}$.

The global EF game $EF^c_{\kappa,\infty}$ is defined so that at each step $i<\omega$ Player I
plays a bounded sequence $\xb_i$ of length $<\kappa$ in either $\ma$ or $\mb$. Player II responds by an a partial chain isomorphism $g_i$
including $\xb_i$ in its domain if $\xb_i$ is in $\ma$ or in its range if $\xb_i$ is in $\mb$ and so that 
$\bigcup_{j<i} g_j\subseteq g_i$. The first player that cannot move loses and II wins if the game lasts $\omega$ steps. We write $\ma\sim^{c}_{\kappa, \infty}\mb$  if Player II has a winning strategy in $EF^c_{\kappa,\infty}$.

\end{definition}

It is worth noting that the relations $\sim^c_{\kappa, \beta}$ and  $\sim^c_{\kappa, \infty}$ are equivalence relations.

\begin{lemma}\label{chainEF} Two chain models $\ma=(C_n)_{n<\omega}$ and $\mb=(D_n)_{n<\omega}$ are in the relation $\ma\sim^c_{\kappa, \beta}\mb$ iff they satisfy the same $L^c_{\infty, \kappa}$-sentences of quantifier rank $<\beta$. 
\end{lemma}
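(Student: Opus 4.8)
The plan is to prove both directions by the standard back-and-forth correspondence between Ehrenfeucht–Fra\"iss\'e games and quantifier rank, adapted to the chain setting. The essential idea is that a winning strategy for Player II in $EF^c_{\kappa,\beta}$ tracks exactly the ability to preserve $L^c_{\infty,\kappa}$-sentences up to quantifier rank $<\beta$, where the quantifier rank is computed so that a single block of $<\kappa$ existential (or universal) quantifiers counts as one unit, matching the single move in which Player I plays a bounded sequence $\xb_i$ of length $<\kappa$.

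First I would set up the induction properly. Since $\beta<\kappa^+$ may be a limit ordinal, I would prove, by induction on $\beta$, the more refined statement relating \emph{positions} in the game to formulas: for every $\beta$, every partial chain isomorphism $g$ of cardinality $<\kappa$, and all tuples $\bar a\in \ma$, $\bar b\in \mb$ matched by $g$, Player II wins $EF^c_{\kappa,\beta}$ from the position $g$ iff $(\ma,\bar a)$ and $(\mb,\bar b)$ satisfy the same $L^c_{\infty,\kappa}$-formulas of quantifier rank $<\beta$ with the displayed free variables. The base case $\beta=0$ (or $\beta=1$, where only quantifier-free formulas are allowed) follows because $g$ being a partial isomorphism is exactly preservation of quantifier-free formulas, and here $\models$ and $\models^c$ agree by the remark preceding equation (\ref{newtruth}). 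The limit case is handled because a formula of quantifier rank $<\lambda$ for limit $\lambda$ already has rank $<\beta$ for some $\beta<\lambda$, and Player II wins the $\lambda$-game iff she wins all the $\beta$-games below it (a play of the $\beta$-game is a play of the $\lambda$-game, using Observation \ref{refinement}-style monotonicity in the ordinal parameter).

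The successor step $\beta\to\beta+1$ is the heart of the argument and the step I expect to be the main obstacle, because it is precisely here that the chain semantics (\ref{newtruth}) enters and differs from the classical case. For the direction ``Player II wins $\Rightarrow$ formulas preserved,'' suppose $\ma\models^c\exists\bar x\,\varphi(\bar x)$ where $\varphi$ has rank $<\beta$; by (\ref{newtruth}) there are $n<\omega$ and a witnessing $\bar a\in C_n^{<\kappa}$, so $\bar a$ is a \emph{bounded} sequence. I would have Player I play $\bar a$ (together with any $\beta_0<\beta+1$ with $\beta_0\ge\beta$, say $\beta_0=\beta$); Player II's strategy yields a partial chain isomorphism $g$ with $\bar a$ in its domain, producing a matching bounded $\bar b$ in $\mb$, and by the induction hypothesis $\varphi$ is preserved, so $\mb\models^c\exists\bar x\,\varphi(\bar x)$. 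The crucial point to verify is that the witness Player II supplies is genuinely bounded, i.e.\ lies in some $D_m$: this is guaranteed because $g$ is a partial \emph{chain} isomorphism of cardinality $<\kappa$, and one must check that the image of a bounded sequence under such a $g$ is again bounded, which is exactly the content of the chain-isomorphism condition in Definition \ref{chainour}. For the converse direction ``formulas preserved $\Rightarrow$ Player II wins,'' I would describe Player II's strategy: when Player I plays a bounded $\xb_i$ (say in $\ma$, contained in $C_n$), Player II must extend the current $g$ to include $\xb_i$ in its domain while keeping all quantifier-rank-$<\beta$ formulas preserved for the enlarged tuple. The existence of a correct response is encoded by a Hintikka-style formula: the conjunction over all rank-$<\beta$ formulas satisfied by the current tuple together with $\xb_i$ is itself (equivalent to) a single rank-$<\beta$ formula under an existential block, and the hypothesis that the rank-$<(\beta+1)$ theories agree forces a matching bounded witness on the $\mb$ side. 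Here I would invoke Theorem \ref{smallnumberofclasses}-type finiteness of the number of relevant classes, or more simply the fact that in $L^c_{\infty,\kappa}$ arbitrary conjunctions are permitted, so the Hintikka formula exists as a legitimate $L^c_{\infty,\kappa}$-sentence. Once the successor and limit steps are in place, specialising the refined statement to the empty partial isomorphism $g=\emptyset$ yields exactly the statement of the lemma.

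I would finally note that the symmetry of the game under swapping $\ma$ and $\mb$ and the assumption $(\bigcup_n C_n)\cap(\bigcup_n D_n)=\emptyset$ let me treat ``Player I plays in $\ma$'' and ``Player I plays in $\mb$'' uniformly, so no separate argument for universal quantifiers is needed beyond taking negations. The one subtlety worth flagging is that the quantifier-rank bookkeeping must assign rank $1$ to an entire block of $<\kappa$ quantifiers (not to each quantifier individually), since a single Player I move introduces a whole bounded sequence of length $<\kappa$; getting this correspondence exactly right is what makes the induction close cleanly.
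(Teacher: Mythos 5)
Your proposal is correct and is precisely the standard Ehrenfeucht--Fra\"iss\'e/quantifier-rank back-and-forth argument that the paper itself invokes: its proof of Lemma \ref{chainEF} consists of a one-line citation to the classical proof going back to Karp \cite{MR0209132}, of which your write-up is a faithful elaboration. You also correctly flag the two chain-specific points that need checking (boundedness of witnesses under partial chain isomorphisms, and counting a block of $<\kappa$ quantifiers as one unit of rank), and the Hintikka-formula step goes through because, up to logical equivalence, there are only set-many formulas of a given rank (as in the counting inside the proof of Lemma \ref{TFAEequivalence}).
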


\begin{proof}
This is a standard proof, going back to \cite{MR0209132}.
$\eop_{\ref{chainEF}}$
\end{proof}

\begin{observation}\label{localversusglobal} If Player II has a winning strategy in $EF^c_{\kappa,\infty}$ then
Player II has a winning strategy in $EF^c_{\kappa,\beta}$ for any $\beta<\kappa^+$. In other words,
$\sim^c_{\kappa, \infty}\subseteq \bigcap_{\beta<\kappa^+}\sim^c_{\kappa, \beta}$
\end{observation}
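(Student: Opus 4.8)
The plan is to observe that $EF^c_{\kappa,\beta}$ is simply a truncated version of $EF^c_{\kappa,\infty}$ in which Player~I is additionally forced to count down along a strictly decreasing sequence of ordinals below $\beta$, and then to let Player~II reuse her winning strategy from the global game while ignoring the ordinal moves altogether. The point is that the ordinal clock can only shorten the game for Player~II, so surviving the longer game $EF^c_{\kappa,\infty}$ is more than enough to survive the shorter one.

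First I would fix a winning strategy $\sigma$ for Player~II in $EF^c_{\kappa,\infty}$ played on $\ma=(C_n)_{n<\omega}$ and $\mb=(D_n)_{n<\omega}$. Given a position in $EF^c_{\kappa,\beta}$ in which Player~I has so far produced bounded sequences $\xb_0,\ldots,\xb_i$ (each in $\ma$ or $\mb$) together with ordinals $\beta_0>\cdots>\beta_i<\beta$, and Player~II has answered with $g_0,\ldots,g_{i-1}$, I define Player~II's response by deleting the ordinal coordinates and applying $\sigma$ to the surviving data: $\tau(\xb_0,\beta_0,g_0,\ldots,\xb_i,\beta_i)=\sigma(\xb_0,g_0,\ldots,\xb_i)$. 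The only thing to verify is that the sequence of moves $(\xb_0,g_0,\xb_1,g_1,\ldots)$ thereby generated is a legitimate initial segment of a play of $EF^c_{\kappa,\infty}$, so that $\sigma$ is applicable and its output is a partial chain isomorphism of cardinality $<\kappa$, extending all previous $g_j$ and placing $\xb_i$ in its domain or range according to whether $\xb_i$ lies in $\ma$ or $\mb$. This is immediate, since the admissibility conditions imposed on the moves $\xb_i$ and $g_i$ are literally the same in the two games.

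Two facts then close the argument. Since $\sigma$ is winning in the global game, Player~II is never the first player unable to move during the first $\omega$ rounds; in particular she can answer throughout the finitely many rounds of any $\beta$-play. And since any strictly decreasing sequence of ordinals is finite, Player~I's clock $\beta_0>\beta_1>\cdots$ must expire after finitely many rounds, at which point it is Player~I who cannot move and hence loses. Therefore $\tau$ is a winning strategy for Player~II in $EF^c_{\kappa,\beta}$, which is exactly the assertion $\sim^c_{\kappa,\infty}\subseteq\;\sim^c_{\kappa,\beta}$; as $\beta<\kappa^+$ was arbitrary we obtain $\sim^c_{\kappa,\infty}\subseteq\bigcap_{\beta<\kappa^+}\sim^c_{\kappa,\beta}$.

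The argument presents no serious obstacle; the only point demanding care is the bookkeeping that, once the ordinal coordinates are projected away, Player~II's answers $g_i$ still form a genuine increasing chain of partial chain isomorphisms, i.e.\ that interleaving Player~I's ordinal choices does not disturb the monotonicity $\bigcup_{j<i}g_j\subseteq g_i$ that $\sigma$ guarantees. I would also emphasise that only this inclusion is claimed: the reverse would require amalgamating, across all $\beta<\kappa^+$, the various finite-clock strategies into a single strategy surviving all $\omega$ rounds, and such an amalgamation does not follow formally from the hypothesis.
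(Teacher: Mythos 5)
Your proposal is correct and is essentially the paper's own argument: the paper likewise has Player II simply follow her winning strategy from $EF^c_{\kappa,\infty}$ inside $EF^c_{\kappa,\beta}$, noting it remains winning there. Your additional bookkeeping (projecting away the ordinal moves, and observing that Player I's strictly decreasing clock forces him to be the first unable to move) just makes explicit what the paper leaves implicit.
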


\begin{proof} If $\beta<\kappa^+$ is given, we let Player II in
$EF^c_{\kappa,\beta}$ follow the same strategy as in $EF^c_{\kappa,\infty}$, which will be winning in 
$EF^c_{\kappa,\beta}$ as well. 
$\eop_{\ref{localversusglobal}}$
\end{proof}

For a converse of Observation \ref{localversusglobal} in the case of proper chain models, see
Lemma \ref{Scottwo}.

\begin{lemma}\label{TFAEequivalence} Let $\kappa$ be a singular strong limit cardinal of cofinality $\omega$ and $\mathcal K$ a class 
 of chain models of a fixed vocabulary $\tau$. Then the following are equivalent:

\begin{enumerate}
\item $\mathcal K$ is definable by a formula of $L^{c}_{\infty,\kappa}$ with the quantifier rank $< \beta$.
\item The model class $\mathcal K$ is closed under $\sim^c_{\kappa, \beta}$.
\item $\mathcal K$ is definable by a formula of $L^{c}_{\lambda,\kappa}$ with the quantifier rank $< \beta$, where $\lambda=\max(\kappa,(\beth_{\beta+2}(|\tau|)^+)$.
\end{enumerate}
\end{lemma}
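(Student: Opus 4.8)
The plan is to prove the equivalence of the three conditions via the cycle $(1)\Rightarrow(2)\Rightarrow(3)\Rightarrow(1)$, exploiting Lemma~\ref{chainEF} to convert between syntactic definability and closure under the equivalence relation $\sim^c_{\kappa,\beta}$. The implication $(3)\Rightarrow(1)$ is immediate, since any formula of $L^c_{\lambda,\kappa}$ is \emph{a fortiori} a formula of $L^c_{\infty,\kappa}$ of the same quantifier rank, so the witnessing formula for (3) witnesses (1) verbatim. The implication $(1)\Rightarrow(2)$ is also short: if $\mathcal K$ is defined by some $\varphi\in L^c_{\infty,\kappa}$ of quantifier rank $<\beta$, and $\ma\sim^c_{\kappa,\beta}\mb$, then by Lemma~\ref{chainEF} the models $\ma$ and $\mb$ satisfy the same $L^c_{\infty,\kappa}$-sentences of quantifier rank $<\beta$, hence in particular agree on $\varphi$; therefore $\mathcal K$ is a union of $\sim^c_{\kappa,\beta}$-classes, i.e.\ closed under the relation.

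The substance of the argument is $(2)\Rightarrow(3)$, which is the usual Scott/Hintikka-style \emph{definability from an invariant} step. First I would verify that for each chain model $\ma$ there is a single ``Hintikka'' sentence $\sigma_\ma\in L^c_{\lambda,\kappa}$ of quantifier rank $<\beta$ such that, for every chain model $\mb$, one has $\mb\models^c\sigma_\ma$ iff $\mb\sim^c_{\kappa,\beta}\ma$. This sentence is built by the standard recursion on $\beta$ that records, level by level, which bounded tuples are realised and with which quantifier-free type, the existential quantifiers being interpreted in the chain sense of~(\ref{newtruth}); the point of choosing $\lambda=\max(\kappa,(\beth_{\beta+2}(|\tau|))^+)$ is precisely to make the conjunctions and disjunctions appearing in this recursion permissible in $L_{\lambda,\kappa}$. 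Granting this, if $\mathcal K$ is closed under $\sim^c_{\kappa,\beta}$ then $\mathcal K$ is a union of $\sim^c_{\kappa,\beta}$-classes, and I would define
\[
\psi_{\mathcal K}=\bigvee\{\,\sigma_\ma:\ \ma\in\mathcal K\,\}.
\]
By the defining property of the $\sigma_\ma$, a chain model satisfies $\psi_{\mathcal K}$ exactly when it is $\sim^c_{\kappa,\beta}$-equivalent to some member of $\mathcal K$, i.e.\ exactly when it lies in $\mathcal K$; so $\psi_{\mathcal K}$ defines $\mathcal K$ and has quantifier rank $<\beta$.

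\textbf{The main obstacle} is the cardinality bookkeeping that forces $\psi_{\mathcal K}$ into $L_{\lambda,\kappa}$ rather than merely $L_{\infty,\kappa}$. The outer disjunction ranges over $\sim^c_{\kappa,\beta}$-classes, so I must bound their number: this is the chain-model analogue of Theorem~\ref{smallnumberofclasses}, and by Lemma~\ref{chainEF} the number of classes is at most the number of inequivalent $L^c_{\infty,\kappa}$-sentences of quantifier rank $<\beta$, which the recursion bounds by roughly $\beth_{\beta+2}(|\tau|)$ once one counts that at each of the $<\beta$ stages the relevant quantifier-free types over $<\kappa$-tuples number at most $2^{<\kappa}\le\kappa$ (using that $\kappa$ is a strong limit). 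Thus both the width of each $\sigma_\ma$ and the width of the outer disjunction are accommodated by $\lambda$, and it is here that the strong-limit, countable-cofinality hypotheses on $\kappa$ are genuinely used. A secondary care is the interaction of the quantifier-rank recursion with the chain semantics: because~(\ref{newtruth}) witnesses an existential by an element of \emph{some} $A_n$, the Hintikka recursion must quantify over bounded tuples, and I would check that Lemma~\ref{chainEF} (the chain EF-theorem) licenses exactly this, so that the resulting $\sigma_\ma$ indeed captures $\sim^c_{\kappa,\beta}$ and not some finer relation.
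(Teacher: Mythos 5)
Your proposal is correct and takes essentially the same route as the paper's proof: $(1)\Rightarrow(2)$ via Lemma~\ref{chainEF}, $(3)\Rightarrow(1)$ trivially, and $(2)\Rightarrow(3)$ by bounding the number of $\sim^c_{\kappa,\beta}$-classes by $\beth_{\beta+2}(|\tau|)$ (via an induction counting formulas of quantifier rank $<\beta$) and disjoining class-characterizing sentences over the classes meeting $\mathcal K$. The only cosmetic difference is that you construct each characterizing sentence $\sigma_\ma$ by the Hintikka-style recursion, whereas the paper simply takes $\bigwedge\{\varphi:\,\mbox{quantifier rank}(\varphi)\le\beta\mbox{ and }\ma_e\models^c\varphi\}$ for a representative $\ma_e$, which characterizes the class directly by Lemma~\ref{chainEF}; the same counting bound underwrites both constructions.
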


Recall that by Definition \ref{generalchainlogic}, the notation $L^{c}_{\infty,\kappa}$ means that we are taking formulas of $L_{\lambda, \kappa}$ but considering them in chain models (of countable length). 

\begin{proof} 
The direction $(1)\implies (2)$ 
follows from Lemma~\ref{chainEF}.

To prove that $(2)\implies (3)$, similarly to Lemma 7.55 in \cite{joukobookgames}, we show by induction on 
$\beta$ that the number of formulas with  quantifier rank $<\beta$ over a set $X$ of free variables has size $\le \beth_{\beta+1}(|X|+ |\tau|)$. Each $\sim^c_{\kappa, \beta}$-equivalence class is determined by the set of sentences with the quantifier rank $<\beta$ which are true in one or any model in the class. Therefore, there are $\le \beth_{\beta+2}(|\tau|)$ such classes $e$. Choose for each $e$ such that $\mathcal K\cap e\neq \emptyset$ a representative $\ma_e\in \mathcal K$. Hence $\mathcal K$ is definable by the formula
\[
\bigvee_{e\cap \KK\neq \empty}
\bigwedge \{\varphi:\, \mbox{quantifier rank}(\varphi)\le\beta\,\,\&\,\, \ma_e\models^c\varphi\}.
\]

The direction $(3)\implies (1)$ follows from the definition.
$\eop_{\ref{TFAEequivalence}}$
\end{proof}

\begin{definition}\label{Kstar} For a class $K$ of models of $\tau$, let $\CC(K)$ stand for the class of all
possible weak chain representations of models in $K$.
\end{definition}

\begin{lemma}\label{betaequiv} If $\kappa$ is as above, $\beta<\kappa$ and $(C_n)_{n<\omega}$
and $(D_n)_{n<\omega}$ are 
$L^{c,2\cdot\beta}_{\kappa,\kappa}$-equivalent (i.e. equivalent for $L^{c}_{\kappa,\kappa}$-sentences of quantifier rank $<2\cdot\beta$), then $\bigcup_{n<\omega} C_n \equiv^\beta_\theta
\bigcup_{n<\omega} D_n$ (in the sense of Definition \ref{DG}) for any $\theta<\kappa$ for which 
$\beta<\theta^+$.

Consequently, suppose that $K$ is a class of models of $L^1_\kappa$ closed under $\equiv^\beta_\theta$. Then the class $\CC(K)$ is closed under 
$\sim^c_{\kappa,2\cdot\beta}$ and hence definable by an $L^c_{\kappa, \kappa}$ sentence of quantifier rank $<2\cdot \beta$.
\end{lemma}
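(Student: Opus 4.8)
The plan is to relate the chain-logic Ehrenfeucht–Fraïssé game $EF^c_{\kappa,\beta}$ of Definition~\ref{chainbeta} to Shelah's borrowing game $\dg^\beta_\theta$ of Definition~\ref{DG}, by showing that a winning strategy for Player~II in the former can be converted into a winning strategy for Player~II in the latter. The factor of $2$ in the quantifier rank is the clue: one round in $\dg$ is asymmetric (Player~I plays on one side, Player~II must cover part of the \emph{other} side via the maps $h_n$), whereas each $EF^c$ round handles one bounded sequence on one designated side. So I expect that each single round of $\dg^\beta_\theta$ should be simulated by \emph{two} rounds of $EF^c_{\kappa,2\cdot\beta}$, which is exactly where $2\cdot\beta$ comes from.

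\medskip

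First I would fix $\ma=\bigcup_n C_n$ and $\mb=\bigcup_n D_n$ together with their chain decompositions, and $\theta<\kappa$ with $\beta<\theta^+$. The core technical point is to read off, from the $EF^c$ game, the data that Player~II must produce in $\dg$. In round~$2k$ of $\dg^\beta_\theta$, Player~I presents a set $A_{2k}\in[A]^{\theta_0}$ (or $B_{2k+1}\in[B]^{\theta_0}$); since $\theta_0\le\theta<\kappa$ and the chain is increasing with union $A$, such a set is \emph{bounded}, i.e.\ contained in some $C_n$, so it is a legal move for Player~I in $EF^c$. Player~II's required output in $\dg$ consists of a partition function $h_k$ together with a partial isomorphism $g_k$ satisfying the ``borrowing'' inclusions $h_{k-1}^{-1}(1)\subseteq\dom(g_k)$, $h_{k-2}^{-1}(2)\subseteq\dom(g_k)$, etc. My plan is to define the $h_k$ directly from the chain structure: set $h_k(a)$ to be (roughly) the least level index $n$ with $a\in C_n$, suitably truncated, so that the preimages $h_k^{-1}(m)$ are exactly the ``new at level $m$'' elements, which are automatically bounded and therefore legal sequences for Player~I to feed into the $EF^c$ strategy on the next rounds. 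Player~II's partial chain isomorphisms in $EF^c$ then furnish the maps $g_k$, and the increasing-union property of the $g_i$ in $EF^c$ gives the required $g_{k-1}\subseteq g_k$.

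\medskip

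The main obstacle will be the bookkeeping of the borrowing inclusions against the finiteness of the descending ordinal sequence. In $\dg^\beta_\theta$ the promise deferred by $h_{k}$ at level $m$ must be honoured $m$ rounds later, while the ordinals $\beta=\beta_0>\beta_1>\cdots$ force the game to terminate after finitely many, say $N$, rounds; so only the promises with $m<N$ are ever called in, and I must check that the $EF^c$ strategy, run for $2N$ rounds with a correspondingly chosen descending sequence of ordinals below $2\cdot\beta$, covers exactly those. Concretely, for the $\dg$-round with $\beta_k$ I would feed $EF^c$ the ordinals $2\cdot\beta_k+1>2\cdot\beta_k$, using one of the paired rounds to put Player~I's set into the relevant $g$ and the other to discharge the oldest outstanding borrowed promise $h_{j}^{-1}(k-j)$. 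Verifying that this pairing keeps every inclusion satisfied, and that boundedness is preserved throughout (so that all sequences handed to $EF^c$ remain legal), is the delicate step; once it is done, Player~II wins $\dg^\beta_\theta(\ma,\mb)$ and hence $\ma\sim_{\beta,\theta}\mb$, which gives $\ma\equiv^\beta_\theta\mb$.

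\medskip

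For the ``Consequently'' clause I would then argue purely formally. Given $K$ closed under $\equiv^\beta_\theta$, take any two weak chain representations $(C_n)_{n<\omega}$ and $(D_n)_{n<\omega}$ with $(C_n)_{n<\omega}\sim^c_{\kappa,2\cdot\beta}(D_n)_{n<\omega}$; by the first part, $\bigcup_n C_n\equiv^\beta_\theta\bigcup_n D_n$, so one lies in $K$ iff the other does, whence their chain representations lie in $\CC(K)$ together. Thus $\CC(K)$ is closed under $\sim^c_{\kappa,2\cdot\beta}$. Finally I would invoke Lemma~\ref{TFAEequivalence}, the implication $(2)\implies(3)$ (applicable since $2\cdot\beta<\kappa$ as $\kappa$ is a limit cardinal and $\beta<\kappa$), to conclude that $\CC(K)$ is definable by an $L^c_{\kappa,\kappa}$ sentence of quantifier rank $<2\cdot\beta$.
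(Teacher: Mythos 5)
Your overall plan coincides with the paper's: define the partition maps $h_n$ from the chain levels ($h_n(a)=\min\{k: a\in C_k\}$) and simulate each round of Shelah's game $\dg^\beta_\theta$ by two rounds of $EF^c_{\kappa,2\cdot\beta}$, one per side. But your execution rests on a false claim. You assert that Player I's move $A_{2k}\in[A]^{\theta_0}$ is bounded ``since $\theta_0\le\theta<\kappa$ and the chain is increasing with union $A$''. This is not true: a set of size $<\kappa$, indeed already a countable set, can meet cofinally many levels of the chain (the decreasing sequence $\bar a$ in Example \ref{dependenceonchain} is exactly such a set), and nothing in $\dg^\beta_\theta$ restricts Player I to bounded sets. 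Consequently your schedule --- ``put Player I's set into the relevant $g$'' in one $EF^c$ round --- cannot be carried out: an unbounded set is not a legal move in $EF^c$, so the strategy $\sigma$ gives no response to it, and a partial chain isomorphism produced in one round cannot be forced to cover it. Note also that if Player I's sets were bounded, the maps $h_n$ and the whole borrowing mechanism would be superfluous; the game is designed precisely so that II need \emph{not} cover Player I's set at once, and your sketch is internally inconsistent on this point (you both cover the whole set immediately and defer pieces of it via $h_k$).

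The missing idea, which is the crux of the paper's proof, is that the deferral encoded by $h_n$ converts unbounded obligations into bounded ones: since $h_j^{-1}(m)\subseteq C_m$ (respectively $D_m$), at round $n$ the union of \emph{all} currently-due pieces on the $A$-side, $h_0^{-1}(n)\cup\dots\cup h_n^{-1}(0)$ minus the current domain, is contained in $C_n$ and hence constitutes a single legal $EF^c$ move; similarly on the $B$-side. So Player II spends her two $EF^c$ rounds covering, per side, all due pieces at once --- she never touches Player I's raw set. Your discharge schedule is also off in a second way: by the rules of $\dg^\beta_\theta$, at round $n$ every piece $h_{n-j}^{-1}(j)$ for $j\le n$ is due simultaneously (split between domain and range according to parity), not just ``the oldest outstanding'' one, so discharging one promise per round would leave II losing. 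Your ``Consequently'' paragraph is fine and matches the paper, modulo the same caveat the paper itself glosses over: Lemma \ref{TFAEequivalence}(3) yields an $L^c_{\kappa,\kappa}$ sentence only when $\beth_{2\cdot\beta+2}(|\tau|)\le\kappa$, which holds in the intended application where $\kappa=\beth_\kappa$, but does not follow merely from $2\cdot\beta<\kappa$.
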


\begin{proof} In fact we prove that $\ma\sim_{\beta,\theta}\mb$, by providing a winning strategy to II in the
game $\dg^\beta_\theta(\ma,\mb)$. By the assumption  
$\ma\sim^c_{\kappa,2\cdot\beta}\mb$, we  fix a winning strategy $\sigma$ for II in $EF^c_{\kappa,2\cdot\beta}$. The $n$-th move of II in the game $\dg^\beta_\theta(\ma,\mb)$ is supposed to produce a pair of functions $(h_n, g_n)$.  To define $h_n$ for $n$ even we let for $a\in A_{n}$:
$$h_{n}(a)= \min\{k : a\in C_k\}$$ and for $n$ odd, for  $b\in B_{n}$ we let
$$h_{n}(b)= \min\{k : b\in D_k\}.$$
That is, player II simply uses the existing chain partitions of $\ma$ and $\mb$. 

To find $g_n$ Player II uses 
$\sigma$ twice. We first assume that $n=2k$ is even. Assume we have defined $g_{n-1}$
(stipulating $g_{-1}=\emptyset$) so that the requirements for II in the game $\dg^\beta_\theta(\ma,\mb)$
are satisfied.
 Then II extends $g_{n-1}$ to a partial isomorphism whose domain includes $h_0^{-1}(2k)\cup\ldots \cup  h_{2k}^{-1}(0)$ by using $\sigma$ in the play of
$EF^c_{\kappa,\beta}$ where $\sigma$ has given 
$g_{n-1}$ in the $2(n-1)$-th step of II and I has played  $h_0^{-1}(2k)\cup\ldots \cup  h_{2k}^{-1}(0)\setminus \dom(g^{n-1})$ in the step $2(n-1)+1$. In this way we extend $g_{n-1}$ to $g'_n$. Then we continue that play of $EF^c_{\kappa,\beta}$ by letting I
respond by $g_0^{-1}(2k)\cup\ldots \cup  g_{2k}^{-1}(0)\setminus \dom(g'_n)$ 
and letting II use $\sigma$
to extend $g'_n$ to $g_n$.
The strategy for $n$ odd is similar. 
 
Now suppose that $K$ is closed under $\equiv^\beta_\theta$ and that 
$\ma=(C_n)_{n<\omega}$ is in $\CC(K)$, while $\mb=(D_n)_{n<\omega}$ is $\equiv^{c,2\cdot\beta}_{\kappa, \kappa}$-equivalent to $\ma$. Then, on the one hand, $\bigcup\ma\in K$, by the definition of $\CC(K)$ and on the other hand,
$\bigcup\ma \equiv^\beta_\theta \bigcup\mb$ by the first part of the lemma. Hence $\mb\in K$, as $K$ is closed under 
$\equiv^\beta_\theta$-equivalence and therefore $(D_n)_{n<\omega}\in \CC(K)$ by the definition of $\CC(K)$.
$\eop_{\ref{betaequiv}}$
\end{proof}

For the rest of this section, in addition to the running assumption that $\kappa$ is a singular cardinal of cofinality 
$\omega$, we  assume that 
$\kappa=\beth_\kappa$.

\begin{theorem}\label{complok} For $\kappa$ as above and $\tau$  a fixed vocabulary, we have:
$$(L^1_\kappa, \models_{L^1_\kappa}, \KK) \le (L^{c,\ast}_{\kappa, \kappa}, \models^c, \KK^c),$$
where $\KK$ is the class of all models of $\tau$ and $\KK^c$ the class of all weak chain models of $\tau$.
\end{theorem}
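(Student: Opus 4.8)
The plan is to exhibit an explicit Chu transform $(f,g)$ witnessing $\le$ in the sense of Definition \ref{def:lesseq}, and then to observe that the density condition is free. For the model component I would take $g$ to be the union map $g((A_n)_{n<\omega})=\bigcup_{n<\omega}A_n$, sending a weak chain model to its underlying ordinary model. Since every ordinary model of $\tau$ is itself a weak chain model via the constant chain (Observation \ref{variouschainmodels}(1)), this $g$ is onto $\KK$, so by the remark following Definition \ref{def:lesseq} the density requirement is automatically satisfied; the only real work is to define $f$ and verify adjointness.

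For the sentence component I would use Lemma \ref{betaequiv} as the engine. Given a sentence $K$ of $L^1_\kappa$, there are by Definition \ref{loksatisfaction} a least $\theta<\kappa$ and a least $\beta<\theta^+$ with $K$ a union of $\equiv^\beta_\theta$-classes; in particular $K$ is closed under $\equiv^\beta_\theta$. Since $\kappa$ is singular it is a limit cardinal, so $\theta<\kappa$ forces $\theta^+<\kappa$ and hence $\beta<\kappa$, which is exactly the hypothesis needed to apply Lemma \ref{betaequiv}. That lemma then yields that $\CC(K)$ is closed under $\sim^c_{\kappa,2\cdot\beta}$ and is definable by an $L^c_{\kappa,\kappa}$-sentence $\varphi_K$ of quantifier rank $<2\cdot\beta$; I would set $f(K)=\varphi_K$. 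I should check that $\varphi_K$ genuinely lives in $L_{\kappa,\kappa}$: inspecting the formula built in the proof of Lemma \ref{betaequiv}, its conjunctions and disjunctions have size at most $\beth_{\beta+2}(|\tau|)$, which is $<\beth_\kappa=\kappa$ because $|\tau|$ is countable and $\beta<\kappa$.

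The adjointness condition $M'\models^c f(K)\iff g(M')\models_{L^1_\kappa}K$ should then unwind from the definitions. For a weak chain model $M'=(A_n)_{n<\omega}$, the right-hand side says $g(M')=\bigcup_{n<\omega}A_n\in K$ by the satisfaction clause of Definition \ref{loksatisfaction}. The left-hand side says $M'\models^c\varphi_K$, which by the choice of $\varphi_K$ is equivalent to $M'\in\CC(K)$; and by Definition \ref{Kstar}, $M'\in\CC(K)$ holds exactly when $\bigcup_{n<\omega}A_n\in K$. Hence both sides are equivalent to $\bigcup_{n<\omega}A_n\in K$, which gives adjointness and completes the verification that $(f,g)$ is a Chu transform satisfying density.

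I expect essentially no new difficulty here, since the combinatorial heavy lifting is entirely contained in Lemma \ref{betaequiv} (which in turn rests on transferring a winning strategy in the chain EF-game $EF^c_{\kappa,2\cdot\beta}$ into one for the borrowing game $\dg^\beta_\theta$). The one point to watch is the bookkeeping that makes $f$ land inside $L_{\kappa,\kappa}$ rather than a larger $L_{\lambda,\kappa}$: this is precisely where the assumption $\kappa=\beth_\kappa$ is used, together with $\beta<\kappa$ coming from singularity, to keep the size of the conjunctions and disjunctions below $\kappa$.
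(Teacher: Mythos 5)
Your proof is correct and is essentially the paper's own argument: the same Chu transform, with $g$ the union map (onto via the constant-chain representation) and $f(K)$ chosen as an $L^c_{\kappa,\kappa}$-sentence defining $\CC(K)$, obtained by combining Lemma \ref{betaequiv} with Lemma \ref{TFAEequivalence}, and adjointness unwound exactly as you do through $M'\in\CC(K)\iff\bigcup_{n<\omega}A_n\in K$. Your explicit bookkeeping that $\beta<\theta^+<\kappa$ and that $\beth_{\beta+2}(|\tau|)<\beth_\kappa=\kappa$ keeps the defining sentence inside $L_{\kappa,\kappa}$ is precisely the (implicit) use of $\kappa=\beth_\kappa$ in the paper's appeal to Lemma \ref{TFAEequivalence}.
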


\begin{proof} We define a Chu transform $(f,g)$ which will witness the desired relation.

To define $f$, suppose that $K$ is a sentence of $L^1_\kappa$. We shall define $f(K)$ to be a certain sentence of $L^{c}_{\kappa,\kappa}$, as we now describe. 

By the definition of what a sentence is in $L^1_\kappa$ (Definition \ref{loksatisfaction}), there are 
$\beta<\theta^+<\kappa$ such that $K$ is the union of certain
equivalence classes of $\ma\equiv^\beta_\theta\mb$ and in particular $K$ is closed under the relation 
$\sim_{\beta, \theta}$. To associate a sentence of $L^{c}_{\kappa,\kappa}$ to $K$,  we prove that $\CC(K)$ is definable in $L^{c}_{\kappa,\kappa}$. By Lemma \ref{TFAEequivalence}, since $\kappa=\beth_\kappa$, it is suffices to show that $\CC(K)$ is closed under 
$\sim^c_{\kappa,\beta}$. This follows from Lemma \ref{betaequiv}. Hence now we have an 
$L^{c}_{\kappa,\kappa}$-sentence which defines $\CC(K)$ and we choose any such sentence to be $f(K)$.

To define $g$, suppose that $(C_n)_{n<\omega}$ is a weak chain model of $\tau$. Then we let $g((C_n)_{n<\omega})=\bigcup_{n<\omega} C_n$. 

Now that we have defined $f$ and $g$, it remains to prove that the adjointness and the density condition are satisfied. 
To prove that $(f,g)$ satisfies the adjointness condition, suppose that for some chain model $(C_n)_{n<\omega}$ and an $L^1_\kappa$ sentence
$K$ we have that $\ma=\bigcup_{n<\omega} C_n\models_{L^1_\kappa} K$. This means that for some
relevant $\beta, \theta$, $K$ is a union of $\equiv^\beta_\theta$-equivalence classes and that $\ma\in K$. By Lemma \ref{betaequiv}, $\CC(K)$ is closed under $\equiv_{L^{c,\beta}_{\kappa, \kappa}}$ and by Lemma \ref{TFAEequivalence}, it is $L^c_{\kappa, \kappa}$-definable by a sentence $\varphi$ of quantifier rank $<\beta$. Moreover, we have chosen
$f(K)$ to be some $L^c_{\kappa, \kappa}$-equivalent representation of $\varphi$. Since 
$(C_n)_{n<\omega}\in \CC(K)$, we have that $(C_n)_{n<\omega}\models^c \varphi$ and hence
$(C_n)_{n<\omega}\models^c f(K)$. In the other direction, suppose that $(C_n)_{n<\omega}\models^c f(K)$ for some $K$. We can find the corresponding $\beta, \theta$ and $\varphi$ as in the previous argulent, so that we now have that
 $(C_n)_{n<\omega}\models^c \varphi$. Then by the choice of $\varphi$ it follows that 
$\bigcup_{n<\omega} C_n\models_{L^1_\kappa} K$.



The density condition is easy to prove since $g$ is onto: any model is the union of a weak chain model.
$\eop_{\ref{complok}}$
\end{proof}

We now show that $L^{c,\ast}_{\kappa, \kappa}$ is strictly stronger than $L^1_\kappa$ in Shelah's sense. 
Namely, Shelah \cite{Sh797} only considers the classically studied order between logics, where 
$\LL_1\le \LL_2$ means that for every $\tau$ and $\varphi\in \LL_1(\tau)$
there is $\psi\in \LL_2(\tau)$ such that  for every model,  $M\models_{\LL_1} \varphi$ iff $M\models_{\LL_2} \psi$. In this definition, the models are fixed, so in the language of Chu transforms, $g$ is the identity,
whereas we wish to compare two logics for which the notions of a model are different, such as  chain logic and $L^1_\kappa$. Then   non-trivial Chu transforms are needed.  In the spirit of the classical definition, our definition of a Chu transform  says that $g$ is `practically' the identity. We shall use similar technology to show that $L^{c,\ast}_{\kappa, \kappa}$ is not in this `Chu identitary' sense weaker than $L_{\kappa,\kappa}$, in contrast to our results in
Theorem \ref{PCtheorem}, which say that $L^{c,\ast}_{\kappa, \kappa}$ is Chu below a model class of 
$L_{\kappa,\kappa}$.

\begin{definition}\label{} Suppose that $(f,g)$ is a Chu transform between logics $\LL_1=(L_1, \models_1, S_1)$ and $\LL_2=(L_2, \models_2, S_2)$ is in the same vocabulary $\tau$, where the elements of $S_1$ are chain models and the elements of $S_2$
are ordinary models. Let $\sim$ be an equivalence relation among the elements of $S_2$.

We say that $(f,g)$ is an {\em $\sim$-identitary} Chu transform if $g$ is onto and, moreover, for every chain model
$(A_n)_{n<\omega}$ in $S_1$ and $\ma\in S_2$ with $\ma= \bigcup_{n<\omega} A_n$, 
there is $\mb\in S_2$ such that 
\[
g(\mb)=\langle A_n:\,n<\omega\rangle\mbox{ and } \mb\sim\ma.
\]
We write $\LL_1\le_i^\sim \LL_2$ if there is an $\sim$-identitary Chu transform  $(f,g)$ between  $\LL_1$ and $\LL_2$.
\end{definition}

\begin{theorem}\label{chainbigger} Suppose that $\kappa$ is a singular cardinal of countable cofinality, $\tau$ is a fixed vocabulary containing a binary relation symbol $<$, $\KK$ is the class of all models of $\tau$ and $\KK^c$ the class of all weak chain models of $\tau$.

{\noindent (1)} Let $\equiv$ stand for the elementary equivalence in $L_{\kappa,\kappa}$. Then we have:
$$ (L^{c,\ast}_{\kappa, \kappa}, \models^c, \KK^c) \nleq_i^\equiv  (L_{\kappa,\kappa}, \models, \KK). $$

{\noindent (2)} Suppose that in addition $\kappa=\beth_\kappa$ and for $\beta<\theta^+<\kappa$
let $\equiv^\beta_\theta$ be the relation defined in Definition \ref{DG}. For $\ma, \mb\in\KK$ define
 $\ma \equiv^1_\kappa\mb$ iff for all $\theta<\kappa$ and all $\beta <\theta^+$ we have
 $\ma \equiv^\beta_\theta\mb$. 
 
 Then 
 $$ (L^{c,\ast}_{\kappa, \kappa}, \models^c, \KK^c) \nleq_i^{\equiv^1_\kappa}  (L^1_\kappa, \models_{L^1_\kappa}, \KK). $$
\end{theorem}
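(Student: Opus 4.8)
The plan is to separate the two negative statements and in each case exhibit a single chain model whose truth value under $\models^c$ cannot be matched by \emph{any} ordinary model that is both in the range of $g$ and equivalent (in the relevant sense $\equiv$ or $\equiv^1_\kappa$) to the union of the chain. The underlying phenomenon is that chain satisfaction is sensitive to the chosen decomposition, as already exhibited in Example \ref{dependenceonchain}: two chain models with the same union can disagree on the sentence $\varphi$ asserting the existence of an infinite $<$-descending sequence. This is exactly the leverage an $\sim$-identitary transform cannot overcome, because such a transform is required to produce, for a given chain decomposition, an ordinary model that is $\sim$-equivalent to the union and whose \emph{chosen} decomposition under $g$ is that very chain.

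For part (1), I would argue by contradiction. Suppose $(f,g)$ is an $\equiv$-identitary Chu transform witnessing $(L^{c,\ast}_{\kappa,\kappa},\models^c,\KK^c)\le_i^{\equiv}(L_{\kappa,\kappa},\models,\KK)$. Take the lexicographic-sum model $\mathfrak A$ of Example \ref{dependenceonchain} together with its two decompositions $(A_n)_{n<\omega}$ and $(B_n)_{n<\omega}$, for which $(A_n)_{n<\omega}\models^c\varphi$ while $(B_n)_{n<\omega}\models^c\neg\varphi$, where $\varphi$ is the descending-chain sentence and both chains have union $\mathfrak A$. Apply the identitary property to the chain model $(A_n)_{n<\omega}$: it yields an ordinary model $\mathfrak B_A$ with $g(\mathfrak B_A)=\langle A_n\rangle$ and $\mathfrak B_A\equiv\mathfrak A$. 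By adjointness, $\mathfrak B_A\models f(\varphi)$ iff $(A_n)_{n<\omega}\models^c\varphi$, so $\mathfrak B_A\models f(\varphi)$. Running the same argument on $(B_n)_{n<\omega}$ gives $\mathfrak B_B\equiv\mathfrak A$ with $\mathfrak B_B\models\neg f(\varphi)$, hence $\mathfrak B_A\not\equiv\mathfrak B_B$ because $f(\varphi)$ is an $L_{\kappa,\kappa}$-sentence and these are preserved by $\equiv$. But $\mathfrak B_A\equiv\mathfrak A\equiv\mathfrak B_B$ forces $\mathfrak B_A\equiv\mathfrak B_B$, a contradiction. The key point is that $f(\varphi)\in L_{\kappa,\kappa}$, so its truth value is an $\equiv$-invariant, yet the identitary transform would have to assign both truth values to $\equiv$-equivalent models.

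For part (2) the same skeleton applies, but now the target equivalence is the coarser $\equiv^1_\kappa$ and the target logic is $L^1_\kappa$ itself, so I must check that $f(\varphi)$ remains an invariant of $\equiv^1_\kappa$. Here $f(\varphi)$ is a sentence of $L^1_\kappa$, and by Definition \ref{loksatisfaction} every such sentence is a union of $\equiv^\beta_\theta$-classes for some $\beta<\theta^+<\kappa$; two models that are $\equiv^1_\kappa$-equivalent are in particular $\equiv^\beta_\theta$-equivalent, so they agree on $f(\varphi)$. Thus the contradiction goes through identically: the identitary transform applied to $(A_n)_{n<\omega}$ and to $(B_n)_{n<\omega}$ would produce $\mathfrak B_A,\mathfrak B_B$, both $\equiv^1_\kappa$ to $\mathfrak A$ hence to each other, yet disagreeing on the $L^1_\kappa$-sentence $f(\varphi)$ by adjointness applied to the chain-level disagreement $(A_n)_{n<\omega}\models^c\varphi$ versus $(B_n)_{n<\omega}\models^c\neg\varphi$. (The assumption $\kappa=\beth_\kappa$ is what makes $L^1_\kappa$ and its $\equiv^\beta_\theta$ apparatus available.)

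The main obstacle I anticipate is verifying that Example \ref{dependenceonchain} applies under the hypotheses of this theorem and that $\varphi$ is genuinely distinguished at the chain level by the two decompositions of the \emph{same} union; once that is in hand the adjointness-plus-invariance argument is short. In particular I would double-check that the identitary property gives a model $\mathfrak B$ whose $g$-image is \emph{exactly} the given chain $\langle A_n\rangle$ (not merely some cofinal refinement), since it is precisely this rigidity that lets the chain-level truth value of $\varphi$ be read off from the ordinary model via $f$. I would also make sure $\varphi$ is expressible in $L_{\kappa,\kappa}$ — it is the countable conjunction $\bigwedge_{n<\omega}(x_{n+1}<x_n)$ under $\omega$ existential quantifiers, well within $L_{\kappa,\kappa}$ since $\cf(\kappa)=\omega$ and $\kappa>\omega$ — so that $f(\varphi)$ is defined and the invariance arguments above have something to act on.
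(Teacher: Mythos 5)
Your proof is correct and follows essentially the same route as the paper: the paper also exploits two decompositions of a lexicographic-sum model disagreeing on a chain sentence (it uses the well-order sentence, the dual of your descending-sequence sentence, on a sum of $\omega^\ast$ copies of $\kappa$, explicitly citing Example \ref{dependenceonchain} as the template), then applies the identitary property to both chains, adjointness, and the $\equiv$- (resp.\ $\equiv^\beta_\theta$-) invariance of $f(\varphi)$ to reach the same contradiction. Your observation for part (2) that a sentence of $L^1_\kappa$ is a union of $\equiv^\beta_\theta$-classes, hence an $\equiv^1_\kappa$-invariant, is exactly the step the paper makes.
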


\begin{proof} 
Let $\varphi$ be
the $L_{\omega_1, \omega_1}$ sentence which says that $<$ is a well-order. In particular, this is an
$L^{c,\ast}_{\kappa, \kappa}$-sentence. Similarly to the proof in Example \ref{dependenceonchain}, let us define an ordinary model
$\ma$ of $\neg\varphi$ as the lexicographic sum of $\omega^\ast$ many copies of $\kappa$, i.e. an infinite decreasing sequence of copies of $\kappa$. Then $\ma\models \neg \varphi$. We now define 
a chain model $(A_n)_{n<\omega}$ such that each $A_n$ consists of the first $n$
copies of $\kappa$ within $\ma$. Therefore $(A_n)_{n<\omega}\models^c\varphi$. Next, we define an $\omega$-sequence $\bar{a}$ by taking exactly one point from each copy of $\kappa$ in $\ma$, so $\bar{a}$ is an infinite decreasing $<$-sequence in $\ma$. Finally, let $B_n=A_n\cup \ran(\bar{a})$, for each $n$. Then $(B_n)_{n<\omega}\models^c\neg \varphi$ and 
$\bigcup_{n<\omega}B_n=\bigcup_{n<\omega}A_n=\ma$. We shall use these models in both parts of the proof.

\smallskip

{\noindent (1)} 
By the definition of an identitary Chu transform, there are $\mathfrak C$ and $\mathfrak D$ in $\KK$ such that:
\begin{itemize}
\item $g(\mathfrak C)= (A_n)_{n<\omega}$,
\item $g(\mathfrak D)= (B_n)_{n<\omega}$ and 
\item $\mathfrak C\equiv \mathfrak D\equiv \ma$.
\end{itemize}
By the defining property of elementary equivalence, we have that either $\mathfrak C,\mathfrak D \models f(\varphi)$ or $\mathfrak C,\mathfrak D \models \neg f(\varphi)$. However, the adjointness condition implies that $\mathfrak C \models f(\varphi)$, since $(A_n)_{n<\omega} \models^c \varphi$, while
$\mathfrak D\models
 \neg f(\varphi)$, since $(B_n)_{n<\omega} \models^c \neg \varphi$. A contradiction.

\smallskip

{\noindent (2)} 
Consider $f(\varphi)$. 
It is an $L^1_\kappa$-sentence, so there are $\beta<\theta^+<\kappa$ such that 
$f(\varphi)$ is a union of $\equiv^{\beta}_\theta$-equivalence classes. By the definition of an identitary Chu transform with respect to $\equiv^1_\kappa$, there are $\mathfrak C$ and $\mathfrak D$ in $S_2$ such that:
\begin{itemize}
\item $g(\mathfrak C)= (A_n)_{n<\omega}$,
\item $g(\mathfrak D)= (B_n)_{n<\omega}$,
\item $\mathfrak C \equiv^{\beta}_\theta \ma \equiv^{\beta}_\theta
\mathfrak D$.
\end{itemize}
In particular, $\mathfrak C\equiv^{\beta}_\theta \mathfrak D$, which means by the properties of 
$f(\varphi)$, that either $\mathfrak C,\mathfrak D \models_{L^1_\kappa} f(\varphi)$ or $\mathfrak C,\mathfrak D \models_{L^1_\kappa} \neg f(\varphi)$. However, the adjointness condition implies that $\mathfrak C
\models_{L^1_\kappa} f(\varphi)$, since $(A_n)_{n<\omega} \models^c \varphi$, while
$\mathfrak D
\models_{L^1_\kappa} \neg f(\varphi)$, since $(B_n)_{n<\omega} \models^c \varphi$. A contradiction.
$\eop_{\ref{chainbigger}}$
\end{proof}

\begin{example}\label{example} In Theorem \ref{PCtheorem} (2) we show that  $ (L^{c,\ast}_{\kappa, \kappa}, \models^c, \KK^c) \leq_i^\fallingdotseq \LL_2$ for a model class  $\LL_2$ of $L_{\kappa, \kappa}$ and
an equivalence relation $\fallingdotseq$ in which all models with the same  $\tau$-reduct are equivalent.
\end{example}

\section{Maximality of the Chain Logic}\label{maximalitysec}
Chain logic satisfies the undefinability of well order, by a theorem Theorem~\ref{Makkaiudwo} of Makkai which we recall here \footnote{We note that Makkai published an errata \cite{ErrataMakkai} to this theorem. A corrected proof was given by Cunnigham in \cite{Cunnighamthesis}, where she explains the situation but still credits the theorem to Makkai.}. In this section we show that the property $\udwo_\kappa$, defined in Definition~\ref{romu}, in a sense characterises the chain logic $L^c_{\kappa, \kappa}$, as per Theorem~\ref{maxudwo}.

\begin{theorem}[Makkai \cite{Makkaichainlogic}\label{th:udwo} Th. 4.1; Cunningham, Th. 3.4.2. of \cite{Cunnighamthesis}]\label{Makkaiudwo} Suppose that $\tau$ is a vocabulary containing $\{<, U\}$ where $<$ is a binary relation symbol and $U$ is a unary relation symbol. Further suppose that $\varphi$ is an $L_{\kappa, \kappa}$-sentence in $\tau$ such that for each $\alpha<\kappa^+$ there is a chain model
$\mb_\alpha$ of $\varphi$ satisfying that $(U^{\mb_\alpha}, <^{\mb_\alpha})$ is a well order of type $\ge\alpha$.

Then there is a strict chain model $\mb$ of $\varphi$ having cardinality $\le\kappa$ and such that $(U^{\mb}, <^{\mb})$ contains a copy of the rationals.
\end{theorem}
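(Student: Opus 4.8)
The plan is to reduce the problem, via the game characterisation of Lemma~\ref{chainEF}, to the construction of a single chain model that is $\sim^c_{\kappa,\beta}$-equivalent to one of the $\mb_\alpha$ but whose $U$-part contains a copy of $\mathbb{Q}$. Since $\varphi$ is an $L_{\kappa,\kappa}$-sentence it has some quantifier rank $<\kappa^+$; fix $\beta<\kappa^+$ strictly above it, so that by Lemma~\ref{chainEF} membership in the class of chain models of $\varphi$ depends only on the $\sim^c_{\kappa,\beta}$-class (as $\varphi$ has quantifier rank $<\beta$ and is preserved by this equivalence). Thus it suffices to produce a chain model $\mb$ with $\mb\sim^c_{\kappa,\beta}\mb_\alpha$ for a suitable $\alpha$ and with $(U^{\mb},<^{\mb})$ containing a copy of $\mathbb{Q}$; such $\mb$ will automatically satisfy $\varphi$, and a final application of the downward L\"owenheim--Skolem theorem for chain logic will bring it down to a strict chain model of size $\le\kappa$.

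The heart of the construction is the extraction of order-indiscernibles. First I would count the number of $\sim^c_{\kappa,\beta}$-types of increasing finite tuples from a linear order inside a chain model; as in the proof of Lemma~\ref{TFAEequivalence} this is bounded by a $\beth$-number computed from $\beta$ and $|\tau|$. When this bound is $<\kappa$ --- which, because $\kappa=\beth_\kappa$ is a strong limit, is exactly the regime $\beta<\kappa$ --- the corresponding Erd\H{o}s--Rado partition number is still $<\kappa^+$, so the hypothesis furnishes a model $\mb_\alpha$ whose well-order $(U^{\mb_\alpha},<)$ is long enough to contain, by iterated application of the Erd\H{o}s--Rado theorem, an infinite set $I\subseteq U^{\mb_\alpha}$ of order-indiscernibles for quantifier-rank-$<\beta$ chain formulas. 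I would then re-index $I$ by $\mathbb{Q}$ and form the Ehrenfeucht--Mostowski-style chain model $\mb$ generated by a $\mathbb{Q}$-indexed copy of the indiscernibles. Indiscernibility gives Player II a winning strategy in $EF^c_{\kappa,\beta}(\mb,\mb_\alpha)$: since that game is finite and each of Player I's moves is a bounded sequence, II answers by matching the finitely many indiscernibles named so far in an order-preserving way and copying the rest of the structure, whence $\mb\sim^c_{\kappa,\beta}\mb_\alpha$ and $\mb\models^c\varphi$.

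To secure the required size and form of the model, I would add $\omega$ new constants $\langle c_q:\,q\in\mathbb{Q}\rangle$ to $\tau$ together with the (countable, hence $L_{\kappa,\kappa}$) axioms $U(c_q)$ and $c_q<c_{q'}$ for $q<q'$, so that $\varphi\wedge\bigwedge_{q<q'}(c_q<c_{q'})\wedge\bigwedge_q U(c_q)$ has a chain model. The downward L\"owenheim--Skolem theorem for chain logic (Corollary~2.5 of \cite{Cunnigham}, used already in Theorem~\ref{lowerbound}(3)) then yields a strict chain model of this expanded sentence of power at most $\kappa$; its $\tau$-reduct is a strict chain model $\mb$ of $\varphi$ of cardinality $\le\kappa$ in which the interpretations of the $c_q$ exhibit a copy of $\mathbb{Q}$ inside $(U^{\mb},<^{\mb})$. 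By Observation~\ref{variouschainmodels}(3) this may be taken to be a proper chain model if desired.

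The main obstacle is the indiscernibility extraction when the quantifier rank of $\varphi$ is large, i.e. $\kappa\le\beta<\kappa^+$: then the number of $\beta$-types can reach $\beth_{\beta+1}\ge\kappa^+$, the Erd\H{o}s--Rado bound exceeds $\kappa^+$, and the hypothesis no longer supplies well-orders long enough to run the partition argument directly. Overcoming this is the delicate point and is where the assumptions that $\kappa$ is a strong limit of cofinality $\omega$ must really be exploited --- either by a Scott-style analysis of chain models of size $\le\kappa$ (cf.\ the Scott analysis of \cite{singtrees}) that lets one replace $\varphi$, on the relevant models, by an equivalent sentence of quantifier rank $<\kappa$, or by a boundedness/absoluteness argument showing that if no chain model of $\varphi$ had an ill-founded $U$-part then the well-order types occurring would be bounded below $\kappa^+$, contrary to hypothesis. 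Either route localises the difficulty precisely at the interaction between the Hanf-number estimate and the ceiling $\kappa^+$ on available order types.
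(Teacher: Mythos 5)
The paper does not prove this theorem at all --- it imports it from Makkai (Th.~4.1 of \cite{Makkaichainlogic}, corrected in \cite{Cunnighamthesis}, Th.~3.4.2), and those proofs go through the method of (chain) consistency properties, i.e.\ a model existence theorem: one writes down a consistency property asserting $\varphi$ together with a $\mathbb{Q}$-indexed descending/increasing configuration in $(U,<)$, verifies its conditions by assigning ordinal ranks drawn from the models $\mb_\alpha$, $\alpha<\kappa^+$, and reads off a chain model of power $\le\kappa$. Your Erd\H{o}s--Rado/Ehrenfeucht--Mostowski route is genuinely different, and it has a gap that is not just the large-rank case you flag at the end. First, iterated Erd\H{o}s--Rado gives, for each fixed exponent $n$, an $n$-indiscernible set, but a single \emph{infinite} set indiscernible for all finite arities simultaneously is the partition relation $\lambda\to(\omega)^{<\omega}_\mu$, an Erd\H{o}s-cardinal property not available from order types below $\kappa^+$. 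The first-order remedy is compactness (realize the EM template finitely, then compactness does the rest), but chain logic is \emph{not} $\kappa$-compact --- this very paper proves that (Theorem~\ref{incompactness}) --- so the only known substitute is precisely the model existence theorem for chain consistency properties, which is the ingredient your proposal is missing and which is the actual content of the Makkai--Cunningham proof. This is also why the sharp hypothesis ``well-orders of all types $<\kappa^+$'' is attainable there, whereas partition-calculus arguments in the Morley style inherently need Hanf-number-length well-orders (of order $\beth$-iterates past $2^{<\kappa}$), far beyond what the hypothesis supplies.

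Second, even granting an infinite order-indiscernible spine, your winning strategy for Player II in $EF^c_{\kappa,\beta}$ does not follow: by Definition~\ref{chainbeta}, Player I plays \emph{bounded sequences of length $<\kappa$}, not finite tuples, so indiscernibility with respect to finite increasing tuples controls at best $L_{\infty,\omega}$-type transfer. To match $L^c_{\kappa,\kappa}$-equivalence you must extend partial chain isomorphisms over $<\kappa$-sized bounded sets, and ``copying the rest of the structure'' between the EM hull over $\mathbb{Q}$ and the hull over a well-order is exactly the point that needs proof; the two hulls are not chain-isomorphic off the spine. Your closing paragraph correctly senses that something breaks when the quantifier rank reaches $[\kappa,\kappa^+)$, but neither suggested repair (Scott-style rank reduction or a boundedness argument) is carried out, and in fact the argument is already incomplete for $\beta<\kappa$ for the two reasons above. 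The final L\"owenheim--Skolem step (new constants $c_q$, $q\in\mathbb{Q}$, plus Cunningham's Corollary~2.5) is fine, but it is downstream of the unproven core.
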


We abstract this property of $L^c_{\kappa,\kappa}$ to arbitrary logics on chain models:

\begin{definition}\label{romu} A {\em logic on chain models} is a logic of the form
$(L,\models_{\mathcal{L},},S)$, where $S $ is a class of weak chain models. 

{\noindent (1)} A logic on chain models is said to satisfy the property $\udwo_\kappa$ if every sentence in $L$ with a distinguished binary predicate $<$ such that for all $\alpha<\kappa^+$ there is chain model of $\varphi$ in $S$ in which $<$ is well-ordered in type $\ge\alpha$, has a proper chain model of cardinality $\le\kappa$ in $S$ in which $<$ is non-well-ordered.

{\noindent (2)} A logic on chain models is said to satisfy $\udwo_{<\kappa}$ 
if for every sentence $\varphi$ in $L$ with a distinguished binary predicate $<$,  there is $\lambda<\kappa$ such that if for all $\alpha<\lambda^+$ there is chain model of $\varphi$ in $S$ in which $<$ is well-ordered in type $\ge\alpha$, then $\varphi$ has a proper chain model of cardinality $\le\lambda$ in $S$ in which $<$ is non-well-ordered.
\end{definition}

Let us define

\begin{definition}\label{limitchain} Suppose that $\kappa$ is a limit of singular cardinals $\lambda$ with
$\cf(\lambda)=\omega$. Then we define $L^c_{<\kappa, <\kappa}=\bigcup_{\lambda<\kappa, \cf(\lambda)=
\omega} L^c_{<\lambda, <\lambda}$.
\end{definition}

We then have the following corollary of Theorem \ref{th:udwo}:

\begin{corollary}\label{lessudwo} Suppose that $\kappa$ is a limit of singular cardinals $\lambda$ with
$\cf(\lambda)=\omega$. Then $L^c_{<\kappa, <\kappa}$ has $\udwo_{<\kappa}$.
\end{corollary}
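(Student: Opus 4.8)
The plan is to derive this as a direct consequence of Makkai's Theorem~\ref{th:udwo}, using the fact that the logic $L^c_{<\kappa, <\kappa}$ is, by Definition~\ref{limitchain}, a union of the logics $L^c_{<\lambda, <\lambda}$ over singular $\lambda<\kappa$ of countable cofinality. The key observation is that any sentence $\varphi$ of $L^c_{<\kappa, <\kappa}$ already lives in one of the levels of this union: by definition of the union, there is some singular $\lambda<\kappa$ with $\cf(\lambda)=\omega$ such that $\varphi$ is a sentence of $L^c_{<\lambda, <\lambda}$. This $\lambda$ is the witness demanded by clause (2) of Definition~\ref{romu}.

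First I would fix an arbitrary sentence $\varphi$ of $L^c_{<\kappa, <\kappa}$ in a vocabulary with a distinguished binary predicate $<$, and select the singular $\lambda<\kappa$ of countable cofinality with $\varphi\in L^c_{<\lambda, <\lambda}$; note that $L^c_{<\lambda,<\lambda}\subseteq L^c_{\lambda,\lambda}$, so $\varphi$ is in particular an $L_{\lambda,\lambda}$-sentence and hence is covered by Theorem~\ref{th:udwo} with $\kappa$ there replaced by $\lambda$. Then I would assume the hypothesis of $\udwo_{<\kappa}$ for this $\lambda$: for every $\alpha<\lambda^+$ there is a chain model of $\varphi$ in which $<$ is well-ordered of type $\ge\alpha$. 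This is precisely the hypothesis of Theorem~\ref{th:udwo} applied at $\lambda$ (which is a singular strong limit of cofinality $\omega$, as required—here one should check that the ambient running assumptions give strong-limit-ness of these $\lambda$, or note that Theorem~\ref{th:udwo} only needs $\lambda$ singular of cofinality $\omega$). Makkai's theorem then produces a strict (proper) chain model $\mb$ of $\varphi$ of cardinality $\le\lambda$ in which $(U^{\mb},<^{\mb})$ contains a copy of the rationals, hence in which $<$ is non-well-ordered. This is exactly the conclusion required by $\udwo_{<\kappa}$ for the chosen $\lambda$.

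The one genuine subtlety to address is a matching of the predicates and of the model classes. Theorem~\ref{th:udwo} as stated uses a unary predicate $U$ to delimit the field of the well-order, whereas Definition~\ref{romu} speaks of $<$ being well-ordered on the whole model; I would reconcile this either by taking $U$ to be interpreted as the full universe (so that the two formulations coincide), or by noting that a non-well-ordered $U$-part yields a non-well-ordered $<$ on the model, which is all that $\udwo_{<\kappa}$ asks. One should also confirm that the ``proper chain model of cardinality $\le\lambda$'' delivered by Makkai's theorem lies in the model class $S$ associated to $L^c_{<\kappa,<\kappa}$; since that class consists of weak chain models and a strict chain model of power $\le\lambda$ is in particular such a model (by Observation~\ref{variouschainmodels}), this is immediate. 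I expect this bookkeeping—rather than any deep argument—to be the only real work: the mathematical content is entirely carried by Theorem~\ref{th:udwo}, and the corollary amounts to observing that each sentence of the union logic is captured at a single level $\lambda<\kappa$, which then serves as the required bound.
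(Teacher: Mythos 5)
Your proposal is correct and is exactly the paper's route: the paper states Corollary~\ref{lessudwo} as an immediate consequence of Makkai's Theorem~\ref{th:udwo}, with the witness $\lambda$ being the level of the union $L^c_{<\kappa,<\kappa}=\bigcup_{\lambda<\kappa,\,\cf(\lambda)=\omega}L^c_{<\lambda,<\lambda}$ at which the given sentence lives, precisely as you argue. Your bookkeeping remarks (reconciling the predicate $U$ with Definition~\ref{romu}, and passing from Makkai's strict chain model to a chain-isomorphic proper one via Observation~\ref{variouschainmodels}(3)) fill in details the paper leaves implicit, and are accurate.
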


It turns out that for $\kappa$ as in Corollary \ref{lessudwo} which is in addition a strong limit, $L^c_{<\kappa, <\kappa}$ is a maximal logic
on chain models having $\udwo_{<\kappa}$. This section is devoted to the proof of that fact, which is given in
Theorem \ref{realmax}. The main theorem leading to the maximality result is the following Theorem \ref{maxudwo}, which can be seen as a quasi-maximality result for chain logic $L^c_{\kappa, \kappa}$.

\begin{theorem}\label{maxudwo} Let $\kappa$ be a singular cardinal of countable cofinality, $\tau$ a fixed vocabulary and $\KK^c$ the class of all weak chain models of $\tau$. 
Suppose that $\LL^\ast$ is a logic on chain models such that:
\begin{itemize}
\item $L^c_{\kappa, \kappa}\le\LL^\ast$ and
\item $(\LL^\ast,\models^c, \KK^c)$ satisfies $\udwo_\kappa$.
\end{itemize}
Then for every sentence $\varphi$ of $\LL^\ast$ 
there is $\alpha<\kappa^+$ such that the class of models of $\varphi$ is 
closed under $\sim^c_{\kappa,\alpha}$. Hence $\varphi$ is definable in $L^c_{\lambda,\kappa}$, where $\lambda=\max(\kappa,(\beth_{\alpha+2}(|\tau|)^+)$.
\end{theorem}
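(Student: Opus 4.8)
The plan is to prove the contrapositive of the main dichotomy: if no such $\alpha$ works, then the models of $\varphi$ cannot be separated by the chain-logic equivalence relations, and I use $\udwo_\kappa$ together with $L^c_{\kappa,\kappa}\le\LL^\ast$ to derive a contradiction. Concretely, suppose toward a contradiction that for every $\alpha<\kappa^+$ the class of models of $\varphi$ is \emph{not} closed under $\sim^c_{\kappa,\alpha}$. This gives, for each $\alpha$, a pair of chain models $\ma_\alpha, \mb_\alpha$ with $\ma_\alpha\sim^c_{\kappa,\alpha}\mb_\alpha$ but $\ma_\alpha\models^c\varphi$ and $\mb_\alpha\not\models^c\varphi$. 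The idea is to package this failure of closure into a single auxiliary sentence $\psi$ in the logic $L^c_{\kappa,\kappa}$ (which, by $L^c_{\kappa,\kappa}\le\LL^\ast$ and Observation~\ref{francesco}, can be combined with $\varphi$ inside $\LL^\ast$) whose chain models encode, via a distinguished order $<$ on a predicate $U$, the ordinal $\alpha$ measuring how deeply $\ma_\alpha$ and $\mb_\alpha$ agree.

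The key steps, in order, are as follows. First I would build a ``coding'' vocabulary extending $\tau$ by $\{<,U\}$ together with enough machinery (two disjoint copies of the underlying structure, plus a relation picking out the position in an EF-style back-and-forth) so that a chain model of the coding sentence $\psi$ consists of a pair of $\tau$-chain-models $\ma,\mb$ and a well-ordered set $(U,<)$ of some order type $\alpha$ witnessing $\ma\sim^c_{\kappa,\alpha}\mb$ while $\ma\models^c\varphi\wedge\mb\models^c\neg\varphi$. Using Lemma~\ref{chainEF}, the relation $\sim^c_{\kappa,\alpha}$ is exactly equivalence for $L^c_{\infty,\kappa}$-sentences of quantifier rank $<\alpha$, so a winning strategy for II in $EF^c_{\kappa,\alpha}$ is the combinatorial object being coded by $(U,<)$. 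Second, since by hypothesis closure fails at every $\alpha<\kappa^+$, for each $\alpha<\kappa^+$ the sentence $\psi$ has a chain model in which $(U,<)$ is well-ordered of type $\ge\alpha$. Third, I invoke $\udwo_\kappa$ (which $\LL^\ast$ satisfies, and which applies to $\psi$ since $\psi\in L^c_{\kappa,\kappa}\subseteq\LL^\ast$): this yields a proper chain model $\mathfrak N$ of $\psi$ of cardinality $\le\kappa$ in which $<$ is \emph{non}-well-ordered, i.e. in which the coded ``agreement ordinal'' contains a copy of the rationals. Fourth, I extract from $\mathfrak N$ a genuine pair $\ma^\ast,\mb^\ast$ together with an ill-founded back-and-forth system; the point is that an ill-founded (infinite descending) coded strategy is strong enough to furnish II with a winning strategy in the \emph{global} game $EF^c_{\kappa,\infty}$, so $\ma^\ast\sim^c_{\kappa,\infty}\mb^\ast$. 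By Observation~\ref{localversusglobal} and the fact that $\varphi$ has bounded quantifier rank once we fix $\mathfrak N$, this forces $\ma^\ast$ and $\mb^\ast$ to agree on $\varphi$, contradicting that $\psi$ encodes $\ma^\ast\models^c\varphi$ and $\mb^\ast\models^c\neg\varphi$.

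The hard part will be the third-to-fourth transition: arranging the coding sentence $\psi$ so that the \emph{non}-well-foundedness of $(U,<)$ delivered by $\udwo_\kappa$ really translates into a winning strategy for II in $EF^c_{\kappa,\infty}$ rather than merely in each finite $EF^c_{\kappa,\alpha}$. This is precisely the ``borrowing'' phenomenon that motivates the whole setup: an infinite descending sequence through $U$ provides unboundedly much ``time'' for II to keep extending partial chain isomorphisms, and one must check that $\psi$ can be written in $L^c_{\kappa,\kappa}$ so as to enforce exactly this — that every descending chain in $U$ indexes a coherent, $\subseteq$-increasing family of partial chain isomorphisms handling the bounded sequences played so far. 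I expect this to require care because the partial isomorphisms have size $<\kappa$ and the chain-model satisfaction relation only sees witnesses appearing in some $A_n$, so the coding must respect both the chain structure and the $(<\kappa)$-size constraint; getting $\psi$ to be a legitimate $L_{\kappa,\kappa}$-sentence (conjunctions of length $<\kappa$, quantifier strings of length $<\kappa$) while faithfully encoding the game is the crux.

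Once the contradiction is obtained, the conclusion is immediate: there is $\alpha<\kappa^+$ such that the class of models of $\varphi$ is closed under $\sim^c_{\kappa,\alpha}$, and then the final definability clause follows directly from Lemma~\ref{TFAEequivalence} (the equivalence of $(2)$ and $(3)$), giving definability in $L^c_{\lambda,\kappa}$ with $\lambda=\max(\kappa,(\beth_{\alpha+2}(|\tau|))^+)$.
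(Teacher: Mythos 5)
Your overall architecture matches the paper's proof almost exactly: assume closure fails at every $\alpha<\kappa^+$, code the witnessing pairs together with a back-and-forth system indexed by a well-order $(U,<^\ast)$ into an auxiliary sentence (the paper does this with predicates $I_n$ of arity $1+\kappa_n+\kappa_n$, using the system characterisation of Lemma~\ref{d}), apply $\udwo_\kappa$ to get a proper chain model of cardinality $\le\kappa$ with $<^\ast$ ill-founded, and read off from a descending chain a winning strategy for Player II in the global game $EF^c_{\kappa,\infty}$ between the $P$-part and the $Q$-part (this is Lemma~\ref{e}). Up to that point your plan is sound, and you correctly identify the coding of the back-and-forth system as the delicate part.

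However, your concluding step contains a genuine gap. You claim that $\ma^\ast\sim^c_{\kappa,\infty}\mb^\ast$ forces agreement on $\varphi$ ``by Observation~\ref{localversusglobal} and the fact that $\varphi$ has bounded quantifier rank once we fix $\mathfrak N$.'' But $\varphi$ is an arbitrary sentence of the abstract logic $\LL^\ast$; it has no quantifier rank, and the assertion that its model class is closed under some $\sim^c_{\kappa,\alpha}$ is \emph{precisely the conclusion of the theorem}, so invoking it mid-proof is circular. Observation~\ref{localversusglobal} does not help either: it passes from a global win to local wins, i.e.\ it yields $\sim^c_{\kappa,\beta}$-equivalence for all $\beta<\kappa^+$, which says nothing about an abstract $\LL^\ast$-sentence. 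The only invariance available for $\LL^\ast$ is invariance under \emph{chain isomorphism}, so the correct final move — and the one the paper makes — is to upgrade the winning strategy in $EF^c_{\kappa,\infty}$ to an actual chain isomorphism via Lemma~\ref{Scottwo}: since $\udwo_\kappa$ delivers a \emph{proper} chain model of cardinality $\le\kappa$, every level of the $P$-part and the $Q$-part has size $<\kappa$, and for singular $\kappa$ of countable cofinality this suffices (Player I enumerates $M_0, N_0, M_1, N_1,\ldots$) to conclude $\ma^\ast\cong^c\mb^\ast$. Chain-isomorphism invariance of $\LL^\ast$ then contradicts $\ma^\ast\models^c\varphi$ and $\mb^\ast\models^c\neg\varphi$. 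Since you already have the proper model of cardinality $\le\kappa$ in hand, your argument is repaired by replacing your last step with this appeal to Lemma~\ref{Scottwo}. A smaller omission: expressing ``$\varphi$ holds in the $P$-part and $\neg\varphi$ in the $Q$-part'' requires that $\LL^\ast$ relativizes, which the paper assumes explicitly; Observation~\ref{francesco}, which concerns boolean combinations under Chu transforms, does not supply this.
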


\medskip

The last sentence in the theorem follows from the rest by Lemma \ref{TFAEequivalence}. To prove the theorem, we shall need some further lemmas. The one is the crucial one and the one where the singularity of $\kappa$ is really essential. It provides a converse to Observation \ref{localversusglobal} in the case of proper chain models.

A logic is said to \emph{relativize} if for every sentence $\varphi$ of the logic with vocabulary $\tau$ and every unary predicate $P\notin\tau$ there is a sentence $\psi$ in the logic such that $\psi$ holds in a model with vocabulary $\tau\cup\{P\}$ if and only if the relativization of the model to $P$ (the ``$P$-part '' of the model) satisfies $\varphi$. Our chain logics satisfies this property but one has to bear in mind that the relativization of a proper chain model need not be a proper chain model. For details concerning relativization we refer to \cite{BarwiseFeferman}.
\begin{lemma}\label{Scottwo} Let $\kappa$ be a singular cardinal of countable cofinality and 
suppose that $(M_n)_{n<\omega}$ and  $(N_n)_{n<\omega}$ are weak chain models of the same vocabulary such that $|M_n|<\kappa$ and $|N_n|<\kappa$ for all $n$. If player II has a
winning strategy in $EF^c_{\kappa,\infty}((M_n)_{n<\omega},(N_n)_{n<\omega})$, then $(M_n)_{n<\omega}
\cong^c (N_n)_{n<\omega}$.
\end{lemma}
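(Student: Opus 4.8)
The plan is to run a single, carefully chosen play of the global game $EF^c_{\kappa,\infty}((M_n)_{n<\omega},(N_n)_{n<\omega})$ against Player II's winning strategy, arranging that Player I exhausts both $\bigcup_n M_n$ and $\bigcup_n N_n$, and then to read off the desired chain isomorphism from the partial chain isomorphisms that Player II is forced to produce. First I would fix a winning strategy $\sigma$ for II. The essential use of $\cf(\kappa)=\omega$ together with $|M_n|,|N_n|<\kappa$ is that each level is itself a legal move: at an even step $2n$ Player I plays a bounded sequence $\bar x_{2n}$ enumerating all of $M_n$ (legitimate, since $M_n\subseteq M_n$ has length $<\kappa$), and at an odd step $2n+1$ Player I plays $\bar x_{2n+1}$ enumerating all of $N_n$. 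Were $\kappa$ regular this would be impossible, as $\omega$ moves of length $<\kappa$ could only reach a set of size $<\kappa$; it is exactly the countable cofinality that lets the two models be swept out in $\omega$ rounds, which is why the singularity of $\kappa$ is really essential here.

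Since $\sigma$ is winning, the play lasts $\omega$ steps and yields an increasing sequence of partial chain isomorphisms $g_0\subseteq g_1\subseteq\cdots$ with $\bar x_i$ in $\dom(g_i)$ (resp. $\ran(g_i)$) according to whether $\bar x_i$ lies in $\mathfrak A$ or $\mathfrak B$. I would then put $g=\bigcup_{i<\omega}g_i$. By the even moves $\dom(g)\supseteq\bigcup_n M_n=M$ and by the odd moves $\ran(g)\supseteq\bigcup_n N_n=N$; as each $g_i$ maps a subset of $M$ into $N$, in fact $\dom(g)=M$, $\ran(g)=N$, and $g$ is a bijection of $M$ onto $N$. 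For the chain-respecting clauses of Definition \ref{chainour}: the image $g(M_n)=g_{2n}(M_n)$ is the image under the partial chain isomorphism $g_{2n}$ of the bounded set $M_n$, hence is bounded, i.e. contained in some $N_m$; symmetrically $g^{-1}(N_n)=g_{2n+1}^{-1}(N_n)$ is contained in some $M_k$. Thus $g$ carries levels to bounded sets in both directions, which is precisely the extra condition required for $g$ to be a chain isomorphism.

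The one genuinely delicate point, and the main obstacle, is to confirm that $g$ is an honest isomorphism of $M$ onto $N$, not merely a chain-respecting bijection built from atomic-preserving pieces. The difficulty is that $\tau$ may contain a relation symbol of infinite arity $\mu<\kappa$, and a tuple $\bar a\in M^\mu$ need not be bounded: since $M=\bigcup_n M_n$ is a countable union, $\bar a$ may meet infinitely many levels and therefore lie in no single $\dom(g_i)$, so preservation of the relation at $\bar a$ does not follow from any one $g_i$ being a partial isomorphism. I would resolve this by appealing to the chain-model semantics rather than to brute preservation: in (\ref{newtruth}) quantifiers are only ever witnessed by bounded tuples, so the satisfaction relation $\models^c$ never interrogates a relation symbol at an unbounded tuple, and a chain-respecting bijection preserving atomic formulas on bounded tuples already preserves $\models^c$ of every $L_{\kappa,\kappa}$-formula by induction on complexity. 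Every bounded tuple does lie in some $\dom(g_i)$, Player I having dumped the relevant level, where $g_i$ preserves it; this yields $(M_n)_{n<\omega}\cong^c(N_n)_{n<\omega}$ in the sense that matters for the logic. Verifying that this chain-level notion of preservation is exactly the $\cong^c$ of Definition \ref{chainour}, and that the inductive preservation argument goes through uniformly across all levels, is the part that requires the most care.
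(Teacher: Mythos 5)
Your first two paragraphs are precisely the paper's proof of Lemma \ref{Scottwo}: the paper likewise has Player I sweep out both models by playing, in alternation, sequences covering $M_0, N_0, M_1, N_1,\dots$ while Player II follows her winning strategy, and then takes the union of the resulting partial chain isomorphisms; your verification of the two boundedness clauses of Definition \ref{chainour} via $g\rest M_n=g_{2n}\rest M_n$ and $g^{-1}\rest N_n=g_{2n+1}^{-1}\rest N_n$ is exactly the detail the paper leaves implicit. (One small correction: the legality of the covering moves needs only the hypothesis $|M_n|,|N_n|<\kappa$, not the singularity of $\kappa$; what countable cofinality buys is that such chain models can have full size $\kappa$ at all.)

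Concerning your third paragraph: for relation symbols of \emph{finite} arity your worry is vacuous, since the chain is increasing, every finite tuple lies in some $M_n$, hence is bounded and is certified by some $g_i$; in that case $g$ is an honest isomorphism outright and no detour through the semantics (\ref{newtruth}) is needed. For symbols of infinite arity $<\kappa$ (which the paper's conventions do allow) the point you raise is genuine, but your fallback proves strictly less than the stated conclusion: Definition \ref{chainour} demands an actual isomorphism of the underlying $\tau$-structures, and in the application (Theorem \ref{maxudwo}) the sentences of the abstract logic $\LL^\ast$ are only assumed closed under that relation, not under chain-respecting bijections preserving atomic formulas at bounded tuples. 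Moreover, the step you defer (``verifying that this chain-level notion of preservation is exactly the $\cong^c$ of Definition \ref{chainour}'') cannot be carried out in general: take $R$ of arity $\omega$ with $R^{\mathfrak A}=\emptyset$ and $R^{\mathfrak B}$ consisting of a single unbounded tuple; Player II wins $EF^c_{\kappa,\infty}$ by playing identity maps on bounded sets, yet the structures are not isomorphic. The correct repair is thus definitional rather than semantic: one should either restrict to finitary relation symbols or read chain (and partial chain) isomorphisms as required to preserve atomic formulas at bounded tuples only --- a convention the paper itself effectively adopts, compare clauses (3) and (6) in the proof of Theorem \ref{maxudwo}, where the tuples entering the predicates $I_n$ are required to be bounded. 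With that convention in place your construction is complete; the paper's own three-line proof passes over this point in silence, so on this issue you have been more careful than the source.
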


\begin{proof} We can choose an increasing sequence $\langle \kappa_n:\, n<\omega\rangle$ of cardinals such that
$\kappa=\sup_{n\in \omega} \kappa_n$ and so that
$|M_n|, |N_n| \le\kappa_n$. We play $EF^c_{\kappa}(M,N)$ so that Player I starts by playing a sequence covering $M_0$, then one covering $N_0$, then $M_1$, then $N_1$, etc. Player II plays according to the winning strategy. In the end we have a chain isomorphism from $(M_n)_{n<\omega}$ to $(N_n)_{n<\omega}$.
$\eop_{\ref{Scottwo}}$
\end{proof}

We now prove two lemmas that work for weak as well as proper chain models. Then we formulate the same lemmas appropriately for chain models satisfying extra conditions. The proofs are quite standard and based on \cite{MR0209132}.

\begin{lemma}\label{dd}
The following conditions are equivalent:
\begin{enumerate}
\item Player II has a winning strategy in the game 
$EF^c_{\kappa,\beta}((M_n)_{n<\omega},(N_n)_{n<\omega})$,
\item There are sets $\langle I_\xi:\,\xi<\beta, n<\omega\rangle $ such that:
\begin{enumerate}
\item If $\xi<\zeta<\beta$, then $\emptyset\ne I_\zeta\subseteq I_\xi$.
\item if $f\in I_\xi$, then $f=\langle(a_\zeta,b_\zeta): \zeta<\kappa_n\rangle$ is a partial chain isomorphism  from $(M_n)$ to $(N_n)$, 
\item if $f\in I_{\xi}$, then for all $\eta<\xi$ and all bounded $A\subseteq \bigcup_{n<\omega} M_n$ of cardinality $<\kappa$ there is $g\in I_\eta$ such that $f\subseteq g$ and $A\subseteq\dom(g)$.
\item If $f\in I_{\xi}$, then for all $\eta<\xi$ and all bounded $B\subseteq 
\bigcup_{n<\omega} N_n$ of cardinality $<\kappa$  there is $g\in I_\eta$  for some $k<\omega$  such that $f\subseteq g$ and $B\subseteq\ran(g)$.
\end{enumerate}
\end{enumerate}
\end{lemma}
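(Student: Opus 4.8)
The plan is to prove this equivalence by the standard correspondence, going back to \cite{MR0209132}, between a winning strategy for Player II in an Ehrenfeucht--Fra\"iss\'e game of ordinal length and an ordinal-indexed descending family of sets of partial isomorphisms enjoying the back-and-forth extension property; the only new feature is that everything must be kept inside the chain framework, so all maps involved are partial \emph{chain} isomorphisms and all sets covered at each round are bounded sequences of length $<\kappa$. Recall from Definition \ref{chainbeta} that a play of $EF^c_{\kappa,\beta}((M_n)_{n<\omega},(N_n)_{n<\omega})$ has Player I choose a strictly decreasing sequence of ordinals $\beta_0>\beta_1>\cdots$ below $\beta$ together with bounded sequences $\xb_i$ in $(M_n)$ or $(N_n)$, while Player II answers with an increasing chain of partial chain isomorphisms $g_i$ covering each $\xb_i$ in domain or in range; since the ordinals strictly decrease, every play is finite.

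For the direction $(1)\Rightarrow(2)$, I would fix a winning strategy $\sigma$ for Player II and declare $f\in I_\xi$ to hold exactly when $f$ is a $\sigma$-reachable position from which Player II still wins under the promise that Player I's next ordinal move is $<\xi$ (equivalently, $f$ is a $\sigma$-consistent position at which the clock reads $\xi$). Clause (b) is then immediate, since every position reached in a legal play is a partial chain isomorphism of bounded size. Clause (a) splits into monotonicity and nonemptiness: if $\xi<\zeta$ then a position winning with clock $\zeta$ is winning with the more permissive clock $\xi$, giving $I_\zeta\subseteq I_\xi$, while the nonemptiness of each $I_\zeta$ for $\zeta<\beta$ follows from $\sigma$ being winning in the whole game, which lets the initial position survive any smaller clock. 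Clauses (c) and (d) are precisely the content of $\sigma$ being winning: from $f\in I_\xi$, if Player I plays an ordinal $\eta<\xi$ together with a bounded $A\subseteq\bigcup_n M_n$ (forth) or a bounded $B\subseteq\bigcup_n N_n$ (back), then $\sigma$ produces $g\supseteq f$ covering $A$ in $\dom(g)$, respectively $B$ in $\ran(g)$, and $g$ remains winning with clock $\eta$, so $g\in I_\eta$.

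For $(2)\Rightarrow(1)$, I would have Player II play so as to maintain the invariant that, after she answers Player I's $i$-th move (which has fixed the ordinal $\beta_i$), her current map satisfies $g_i\in I_{\beta_i}$. Granting the invariant, when Player I next plays $\beta_{i+1}<\beta_i$ and a bounded sequence $\xb_{i+1}$, one applies clause (c) if $\xb_{i+1}$ lies in $(M_n)$, with $A=\ran(\xb_{i+1})$, or clause (d) if it lies in $(N_n)$, taking $\xi=\beta_i$ and $\eta=\beta_{i+1}$; this yields $g_{i+1}\supseteq g_i$ in $I_{\beta_{i+1}}$ covering $\xb_{i+1}$, so Player II can always move and the invariant persists. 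Since Player I's ordinals strictly decrease the play terminates with Player II having answered every move, so she wins. The sole point needing a convention is the opening move: when Player I opens with $\beta_0<\beta$, one starts the extension from an element of $I_{\beta_0+1}$ when $\beta_0+1<\beta$, and otherwise reads the initial clock as $\beta$ and the empty map as lying above the family, which is the harmless normalization used in \cite{MR0209132}.

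The steps that demand the most care --- and which constitute the genuine \emph{chain} content rather than bookkeeping --- are verifying that the extensions delivered in (c) and (d) are honest partial chain isomorphisms and that the auxiliary index $n<\omega$ in the family $\langle I_\xi:\,\xi<\beta,\ n<\omega\rangle$, which records the size bound $\kappa_n$ on $f$ in clause (b), can always be increased to absorb the newly covered bounded set. Because every bounded sequence of length $<\kappa$ lies inside a single $M_n$ or $N_n$, the union of $g_i$ with the new pairs stays bounded and respects the chain decomposition exactly as in the classical back-and-forth argument, so no new phenomenon arises and the remaining verifications are the routine ones of \cite{MR0209132}. I expect no serious obstacle here, only the disciplined tracking of boundedness and of the chain condition on the maps; this is also consistent with the fact that the lemma is the ``local'' companion of the converse to Observation \ref{localversusglobal} furnished by Lemma \ref{Scottwo}.
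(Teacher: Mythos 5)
Your proof is correct and takes essentially the same route as the paper's: the paper likewise extracts $I_\xi$ from a winning strategy by collecting the partial chain isomorphisms arising after Player I plays $\xi$ as his rank move, and conversely has Player II maintain the invariant that the map she plays after rank move $\xi$ lies in $I_\xi$, using clauses (c) and (d) to extend. Your variant definition of $I_\xi$ via positions that remain winning with clock $\xi$ (which makes monotonicity in (a) immediate) and your explicit normalization of the opening move are slightly more careful than the paper's terse treatment, but they amount to the same standard correspondence from \cite{MR0209132}.
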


\begin{proof}
During the game $EF^c_{\kappa,\beta}$ Player I plays, among other things, some elements of $\beta$ in descending order. Let us call those move \emph{rank moves}. If (1) holds then we let $I_\xi$ be the set of partial chain isomorphisms  from $(M_n)$ to $(N_n)$ that arise in the game when II is playing her winning strategy and Player I has played $\xi$ as his rank-move. Conversely, suppose such sets $I_\xi$ exist. The strategy of II is to make sure that the partial functions she plays after Player I plays $\xi$ as his rank move, are in $I_\xi$. More exactly, suppose she has played a partial chain isomorphism $f$  in $I_\xi$ and then Player I plays a bounded set $A$ of cardinality $<\kappa$ as well as $\eta<\xi$ as the rank move. Player II uses (c) above to find $g\in I_\eta$ such that $f\subseteq g$ and $A\subseteq\dom(g)$. She plays this $g$ in the game.
$\eop_{\ref{d}}$
\end{proof}

\begin{lemma}\label{ee}
The following conditions are equivalent:
\begin{enumerate}

\item Player II has a winning strategy in the game $EF^c_{\kappa,\infty}((M_n)_{n<\omega},(N_n)_{n<\omega})$.
\item There is a set $J$ such that
\begin{enumerate}
\item $J\ne\emptyset$.
\item if $f\in J$, then $f$ is a partial chain isomorphism  of cardinality $<\kappa$  from $(M_n)$ to $(N_n)$,
\item if $f\in J$, then for all bounded $A\subseteq \bigcup_{n<\omega} M$ of cardinality $<\kappa$ there is $g\in J$ such that $f\subseteq g$ and $A\subseteq\dom(g)$.
\item If $f\in J$, then for  all bounded $B\subseteq \bigcup_{n<\omega} N_n$  of cardinality $<\kappa$  there is $g\in J$ such that $f\subseteq g$ and $B\subseteq\ran(g)$.
\end{enumerate}
\end{enumerate}
\end{lemma}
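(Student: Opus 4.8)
The plan is to prove Lemma~\ref{ee}, the back-and-forth characterisation of the global game $EF^c_{\kappa,\infty}$, by the standard route: the set $J$ of positions compatible with a winning strategy witnesses $(1)\implies(2)$, and conversely a set $J$ with properties (a)--(d) \emph{is} a winning strategy for Player II. This is exactly parallel to the finite-rank version in Lemma~\ref{dd}, with the simplification that there are no rank-moves to track, so a single set $J$ replaces the descending family $\langle I_\xi\rangle$.

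For $(1)\implies(2)$, assume Player II has a winning strategy $\sigma$ in $EF^c_{\kappa,\infty}((M_n)_{n<\omega},(N_n)_{n<\omega})$. I would let $J$ be the collection of all partial chain isomorphisms $g$ that Player II produces as some move when she follows $\sigma$ against an arbitrary legal sequence of moves by Player I. Property (b) is immediate, since every move of II is by definition a partial chain isomorphism of cardinality $<\kappa$. Property (a) holds because II's response to Player I's opening move already lies in $J$. For (c): given $f\in J$, realized after some partial play $p$, and a bounded $A\subseteq\bigcup_n M_n$ of cardinality $<\kappa$, I let Player I extend $p$ by playing $A$ (on the $\ma$-side); since $\sigma$ is winning, II can respond, and her response $g\supseteq f$ with $A\subseteq\dom(g)$ is again in $J$. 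Property (d) is symmetric, with Player I playing a bounded $B\subseteq\bigcup_n N_n$ on the $\mb$-side.

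For $(2)\implies(1)$, assume such a $J$ exists and describe the strategy for Player II directly, exactly as in the proof of Lemma~\ref{dd}: she maintains the invariant that the current partial chain isomorphism lies in $J$. She opens by choosing any $f_0\in J$ (nonempty by (a)) whose domain or range covers Player I's first bounded move, using (c) or (d) as appropriate to arrange the containment; inductively, having played $f_i\in J$, when Player I plays a bounded set of size $<\kappa$ on the $\ma$-side she invokes (c) to find $g\in J$ with $f_i\subseteq g$ and that set in $\dom(g)$, and symmetrically uses (d) on the $\mb$-side. Since every requested move can be answered, Player II never gets stuck, so the play lasts $\omega$ steps and she wins. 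The only point needing a word of care is that property (a) guarantees a nonempty starting position even before Player I has constrained anything, and that at each stage the extension furnished by (c)/(d) remains a partial chain isomorphism of cardinality $<\kappa$, which is built into clause (b); both are routine.

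There is no substantive obstacle here — this is a textbook back-and-forth argument of the kind attributed to \cite{MR0209132}, and the authors flag it as ``quite standard.'' The only feature distinguishing it from the classical Ehrenfeucht--Fra\"iss\'e setting is that the partial maps are \emph{chain} isomorphisms and Player I's moves are \emph{bounded} sequences of length $<\kappa$; accordingly, the mild care I would take is to check throughout that ``boundedness'' and the cardinality bound $<\kappa$ are preserved when passing from $f$ to its extension $g$, so that the objects populating $J$ are genuinely of the type demanded by clause (b). Beyond this bookkeeping the proof is a direct transcription of Lemma~\ref{dd} with the rank parameter removed.
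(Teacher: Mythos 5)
Your proof is correct and follows essentially the same route as the paper: both directions define $J$ as the set of partial chain isomorphisms arising from plays of Player II's winning strategy, and conversely read off a strategy from $J$ by invoking (c) and (d) to extend the current map, exactly as in the paper's treatment (which mirrors Lemma~\ref{dd} with the rank moves removed). No gaps to report.
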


\begin{proof}
If (1) holds then we let $J$ be the set of partial chain isomorphisms  of cardinality $<\kappa$  from $(M_n)$ to $(N_n)$ that arise in the game when II is playing her winning strategy. Conversely, suppose such sets $J$ exist. The strategy of II is to make sure that the partial functions she plays  are in $J$. More exactly, suppose she has played a partial chain isomorphism $f$ of cardinality $<\kappa$ in $J$ and then Player I plays a bounded set $A$ of cardinality $<\kappa$. Player II uses (c) above to find $g\in J$ such that $f\subseteq g$ and $A\subseteq\dom(g)$. She plays this $g$ in the game.
$\eop_{\ref{e}}$
\end{proof}

For singular $\kappa$ we can be more specific:

\begin{lemma}\label{d}Let $\kappa$ be a singular cardinal of  cofinality $\omega$ and $\kappa=\sup_n\kappa_n$, where $\kappa_n<\kappa_{n+1}$.
The following conditions are equivalent:
\begin{enumerate}
\item Player II has a winning strategy in the game 
$EF^c_{\kappa,\beta}((M_n)_{n<\omega},(N_n)_{n<\omega})$,
\item There are sets $\langle I^n_\xi:\,\xi<\beta, n<\omega\rangle $ such that:
\begin{enumerate}
\item If $\xi<\zeta<\beta$, then $\emptyset\ne I^n_\zeta\subseteq I^n_\xi$.
\item if $f\in I^n_\xi$, then $f=\langle(a_\zeta,b_\zeta): \zeta<\kappa_n\rangle$ is a partial chain isomorphism  from $(M_n)$ to $(N_n)$, 
\item if $f\in I^n_{\xi}$, then for all $\eta<\xi$ and all bounded $A\subseteq \bigcup_{n<\omega} M_n$ of cardinality $\le\kappa_{m}$, $m>n$, there is $g\in I^{m}_\eta$ such that $f\subseteq g$ and $A\subseteq\dom(g)$.
\item If $f\in I^n_{\xi}$, then for all $\eta<\xi$ and all bounded $B\subseteq 
\bigcup_{n<\omega} N_n$ of cardinality $\le\kappa_{m}$, $m>n$,  there is $g\in I^m_\eta$  such that $f\subseteq g$ and $B\subseteq\ran(g)$.
\end{enumerate}
\end{enumerate}
\end{lemma}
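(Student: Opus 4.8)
My plan is to follow the proof of Lemma~\ref{dd} almost verbatim, refining the single family $\langle I_\xi\rangle$ into the stratified family $\langle I^n_\xi\rangle$ by sorting the partial chain isomorphisms that occur according to their size on the scale $\langle\kappa_n\rangle$. Since $\cf(\kappa)=\omega$, every partial chain isomorphism of cardinality $<\kappa$ and every bounded sequence Player~I can play sits at some finite level of the scale, and this is the only place where the hypothesis on $\kappa$ is used.

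For $(1)\Rightarrow(2)$ I would, for a partial chain isomorphism $f$ and $\xi<\beta$, say that II \emph{wins from} $(f,\xi)$ if she has a winning strategy in the play continued from the position in which the isomorphism constructed so far is $f$ and Player~I's next rank-move must be $<\xi$, and then set
\[
I^n_\xi=\{\,f:\ |f|\le\kappa_n,\ f\text{ a partial chain isomorphism, II wins from }(f,\xi)\,\}.
\]
Clause (b) holds by definition. For (a), if $\xi<\zeta$ then winning from $(f,\zeta)$ implies winning from $(f,\xi)$, since Player~I simply has fewer admissible opening ranks, so $I^n_\zeta\subseteq I^n_\xi$; and each $I^n_\xi$ is nonempty because $\emptyset$ lies in it: the $(\emptyset,\xi)$-game is the full game with Player~I's opening rank restricted to be $<\xi<\beta$, which II still wins. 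For (c), given $f\in I^n_\xi$, $\eta<\xi$ and bounded $A$ with $|A|\le\kappa_m$ and $m>n$, I let Player~I play rank $\eta$ together with an enumeration of $A$; II's winning answer is some $g\supseteq f$ with $A\subseteq\dom(g)$ from which she still wins, so $g$ witnesses the required membership once we check $|g|\le\kappa_m$. This is where I would invoke that II may be taken to play \emph{economically}, extending the current isomorphism only by the images (or preimages) of the points just played: such a strategy is extracted from any winning strategy by having II privately follow the original strategy while publicly committing only to the required points. Then $|g|\le|f|+|A|\le\kappa_n+\kappa_m=\kappa_m$, so $g\in I^m_\eta$. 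Clause (d) is symmetric.

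For $(2)\Rightarrow(1)$ I would have II maintain the invariant that, after Player~I's move of rank $\beta_i$, her response $g_i$ lies in $I^{n_i}_{\beta_i}$ for some $n_i$ with $\kappa_{n_i}\ge|g_i|$, covering all points played so far. When Player~I next plays $\beta_{i+1}<\beta_i$ and a bounded sequence $\bar x_{i+1}$, I pick $m>n_i$ with $|\bar x_{i+1}|\le\kappa_m$ (possible as $\kappa=\sup_k\kappa_k$) and use (c) or (d) to get $g_{i+1}\in I^m_{\beta_{i+1}}$ extending $g_i$ and covering $\bar x_{i+1}$, setting $n_{i+1}=m$. As the ranks $\beta_0>\beta_1>\cdots$ strictly descend the play is finite and II is never stuck, so she wins; the first move is handled in the same way, starting from $\emptyset$ and the nonemptiness in (a).

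The main obstacle, and the only real difference from Lemma~\ref{dd}, is the cardinality control in (c)/(d): one must be sure that a set of size $\le\kappa_m$ can be absorbed by an isomorphism that is \emph{both} of size $\le\kappa_m$ and a continuation of a winning play. This is exactly what the passage to an economical winning strategy secures, and it is where $\cf(\kappa)=\omega$ is essential, since it places every relevant object at a finite level $\kappa_m$ of the scale. The remaining verifications are the routine back-and-forth bookkeeping already carried out for Lemma~\ref{dd}.
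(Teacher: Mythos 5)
Your proposal is correct and takes essentially the same approach as the paper: where you define $I^n_\xi$ as the winning positions of cardinality $\le\kappa_n$ and then invoke an ``economical'' strategy (equivalently, monotonicity of winning positions under passing to subfunctions) to verify the cardinality bound in (c) and (d), the paper builds that same cut-down step directly into the definition, taking $I^n_\xi$ to be the subfunctions of cardinality $\le\kappa_n$ of the partial chain isomorphisms arising when II plays her winning strategy after rank-move $\xi$. Your $(2)\Rightarrow(1)$ direction matches the paper's bookkeeping, and your treatment of the opening move is at the same level of detail as the paper's.
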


\begin{proof}
 If (1) holds then we let $I^n_\xi$ be the set of partial chain isomorphisms  of cardinality $\le\kappa_n$  from $(M_n)$ to $(N_n)$ which are subsets of cardinality $\le\kappa_n$ of partial functions that arise in the game when II is playing her winning strategy and Player I has played $\xi$ as his rank-move. Conversely, suppose such sets $I^n_\xi$ exist. The strategy of II is to make sure that the partial functions she plays after Player I plays $\xi$ as his rank move, are in $I^n_\xi$ for a suitable $n$. More exactly, suppose she has played a partial chain isomorphism $f$ of cardinality $\le\kappa_n$ in $I^n_\xi$ and then Player I plays a bounded set $A$ of cardinality $\le\kappa_m$, where $m>n$ as well as $\eta<\xi$ as the rank move. Player II uses (c) above to find $g\in I^m_\eta$ such that $f\subseteq g$ and $A\subseteq\dom(g)$. She plays this $g$ in the game.
$\eop_{\ref{d}}$
\end{proof}

\begin{lemma}\label{e}Let $\kappa$ be a singular cardinal of  cofinality $\omega$ and $\kappa=\sup_n\kappa_n$, where $\kappa_n<\kappa_{n+1}$.
The following conditions are equivalent:
\begin{enumerate}
\item Player II has a winning strategy in the game $EF^c_{\kappa,\infty}((M_n)_{n<\omega},(N_n)_{n<\omega})$.
\item There are sets $J_n$, $n<\omega$, such that
\begin{enumerate}
\item $J_n\ne\emptyset$.
\item if $f\in J_n$, then $f$ is a partial chain isomorphism  of cardinality $\le\kappa_n$  from $(M_n)$ to $(N_n)$,
\item if $f\in J_n$, then for all bounded $A\subseteq \bigcup_{n<\omega} M$ of cardinality $\le\kappa_m$, $m>n$, there is $g\in J_m$ such that $f\subseteq g$ and $A\subseteq\dom(g)$.
\item If $f\in J_n$, then for  all bounded $B\subseteq \bigcup_{n<\omega} N_n$  of cardinality $\le\kappa_m$, $m>n$,  there is $g\in J_m$ such that $f\subseteq g$ and $B\subseteq\ran(g)$.
\end{enumerate}
\end{enumerate}
\end{lemma}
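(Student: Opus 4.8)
The plan is to prove both implications by the standard back-and-forth bookkeeping, paralleling the proof of Lemma \ref{d} but now for the infinite game $EF^c_{\kappa,\infty}$: the decreasing rank moves of the $\beta$-game disappear and are replaced by the requirement that Player II survive all $\omega$ rounds, while the level-indexing by $n<\omega$ (tied to $\kappa=\sup_n\kappa_n$) is kept exactly as in Lemma \ref{d}. This also supplies the converse to Observation \ref{localversusglobal} promised before Lemma \ref{Scottwo}.

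For the direction $(1)\implies(2)$, I would fix a winning strategy $\sigma$ for Player II in $EF^c_{\kappa,\infty}$ and set, for each $n<\omega$,
\[
J_n=\{f:\ f \text{ is a partial chain isomorphism with } |f|\le\kappa_n \text{ and } f\subseteq g \text{ for some position } g \text{ reachable under } \sigma\}.
\]
Property (b) is then immediate and (a) holds because $\emptyset$ is a subset of the first response $g_0$ that $\sigma$ produces against any opening move of Player I. For (c), suppose $f\in J_n$ witnessed by $f\subseteq g$ along a play $\pi$ following $\sigma$, and let $A$ be bounded with $|A|\le\kappa_m$, $m>n$. I continue $\pi$ by having Player I enumerate $A$ in $\ma$ at the next round; since $\sigma$ is winning it answers with a reachable $g'\supseteq g$ such that $A\subseteq\dom(g')$. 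Putting $h=f\cup(g'\rest A)$ gives $h\subseteq g'$, so $h$ is again a partial chain isomorphism, with $f\subseteq h$, $A\subseteq\dom(h)$, and $|h|\le|f|+|A|\le\kappa_n+\kappa_m=\kappa_m$; hence $h\in J_m$. Property (d) is the mirror image, using $\mb$ and the range.

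For $(2)\implies(1)$, I would convert the $J_n$ into a strategy for Player II maintaining the invariant that her current response $g_i$ lies in $J_{n_i}$ for some index $n_i$. At the first round Player I plays a bounded $\bar x_0$ of length $<\kappa$; choosing $m\ge 1$ with $|\bar x_0|\le\kappa_m$ and starting from some $f_0\in J_0$ (nonempty by (a)), property (c) (respectively (d), according as $\bar x_0$ lies in $\ma$ or $\mb$) supplies $g_0\in J_m$ extending $f_0$ with $\bar x_0$ in its domain (range), and we set $n_0=m$. At a later round with current $g_i\in J_{n_i}$, Player I plays a bounded $\bar x_{i+1}$; picking $m>n_i$ with $|\bar x_{i+1}|\le\kappa_m$ and applying (c) or (d) to $f=g_i$ yields $g_{i+1}\in J_m$ with $g_i\subseteq g_{i+1}$ and $\bar x_{i+1}$ in the domain (range). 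Because the responses are increasing, $\bigcup_{j\le i}g_j=g_i\subseteq g_{i+1}$ automatically, so every move is legal; Player II therefore survives all $\omega$ rounds and wins.

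The routine parts are the checks that each restriction $g'\rest A$ remains a partial chain isomorphism, which hold since $\tau$ is relational and a restriction of a partial chain isomorphism is again one, exactly as already used in Lemma \ref{d}. The one point needing care—and the sole place where the singularity of $\kappa$ and the fixed increasing decomposition $\kappa=\sup_n\kappa_n$ enter—is the cardinality bookkeeping: I must keep the indices $n_i$ strictly increasing so that each freshly played bounded set of size $\le\kappa_m$ can be absorbed into a position of size $\le\kappa_m$, using $\kappa_n+\kappa_m=\kappa_m$ for $m>n$. This is precisely why clauses (c) and (d) are stated with the constraint $m>n$ rather than permitting an arbitrary target level, and it is the feature that distinguishes Lemma \ref{e} from its unindexed counterpart Lemma \ref{ee}.
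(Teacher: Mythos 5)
Your proof is correct and follows essentially the same route as the paper's: in one direction you take $J_n$ to be the (subsets of cardinality $\le\kappa_n$ of) positions reachable under Player II's winning strategy, and in the other you convert the sets $J_n$ into a strategy by keeping the current position in some $J_{n_i}$ with the indices $n_i$ increasing. Your verification of clauses (c) and (d) via $h=f\cup(g'\rest A)$ and the cardinality bookkeeping $\kappa_n+\kappa_m=\kappa_m$ just fills in details the paper leaves implicit.
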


\begin{proof}
 If (1) holds then we let $J_n$ be the set of partial chain isomorphisms  of cardinality $\le\kappa_n$  from $(M_n)$ to $(N_n)$ which are subsets of cardinality $\le\kappa_n$ of partial functions that arise in the game when II is playing her winning strategy. Conversely, suppose such sets $J_n$ exist. The strategy of II is to make sure that the partial functions she plays are in $J_n$ for a suitable $n$. More exactly, suppose she has played a partial chain isomorphism $f$ of cardinality $\le\kappa_n$ in $J_n$ and then Player I plays a bounded set $A$ of cardinality $\le\kappa_m$, where $m>n$. Player II uses (c) above to find $g\in J_m$ such that $f\subseteq g$ and $A\subseteq\dom(g)$. She plays this $g$ in the game.
$\eop_{\ref{e}}$
\end{proof}

\begin{proof} (of  Theorem~\ref{maxudwo}) We argue along the lines of the proof of 
Theorem 3.2 in \cite{Barwise}. Suppose that $\LL^\ast$ is as in the assumption and let $\varphi$ be an $\LL^\ast$-sentence. Suppose for a contradiction that 
for every $\alpha<\kappa^+$ there are chain models $(M^\alpha_n)_{n<\omega}$
and $(N^\alpha_n)_{n<\omega}$ in $\KK^c$ such that 
$(M^\alpha_n)_{n<\omega}\sim^c_{\kappa, \alpha}(N^\alpha_n)_{n<\omega}$, while 
$(M^\alpha_n)_{n<\omega}\models^c\varphi$ and 
$(N^\alpha_n)_{n<\omega}\models^c\neg \varphi$.
For each such $\alpha$ we get sets $I^n_\xi, \xi<\alpha,$ from Lemma~\ref{d}.

We now build a new $\LL^\ast$ sentence $\varphi^\ast$ in an extended vocabulary in which $U,P,Q$ are new unary relation symbols, $<^\ast$ is a new binary relation symbol and $I_n$ is a new $1+\kappa_n+\kappa_n$-ary relation symbol for each $n<\omega$. The sentence $\varphi^\ast$ is the conjunction of sentences saying:
\begin{enumerate}
\item the sentence $\varphi$
holds in the $P$ part of the (chain) model and $\neg\varphi$ holds in the $Q$-part,
\item   $\le^\ast$ is a linear order on $U$,

\item If $I_n(i,(a_\xi)_{\xi<\zeta},(b_\xi)_{\xi<\zeta})$, then $i\in U$, $(a_\xi)_{\xi<\zeta}\subseteq P$ and $(b_\xi)_{\xi<\zeta}\subseteq Q$, both bounded,
\item If $i<^*j$ and $I_n(j,\bar{a},\bar{b} )$, then  $I_n(i,\bar{a},\bar{b} )$ for all bounded sequences $\bar{a}$ and $\bar{b}$ of length $\kappa_n$,
\item For each $n<\omega$, if $I_n(i,(a_\xi)_{\xi<\kappa_n},(b_\xi)_{\xi<\kappa_n})$ then for all $j<^\ast i$, $m>n$, and all bounded $(a'_\xi)_{\kappa_n\le\xi<\kappa_m}\in P$ ($(b'_\xi)_{\kappa_n\le\xi<\kappa_m}\in Q$) there is a bounded $(b'_\xi)_{\kappa_n\le\xi<\kappa_m}\in Q$ ($(a'_\xi)_{\kappa_n\le\xi<\kappa_m}\in P$) such that 
$$I_m(j,(a_\xi)_{\xi<\kappa_n}\frown (a'_\xi)_{\kappa_n\le\xi<\kappa_m},(b_\xi)_{\kappa_n\le\xi<\kappa_m}\frown(b'_\xi)_{\kappa_n\le\xi<\kappa_m})).$$
\item If $I_n(i,(a_\xi)_{\xi<\kappa_n},(b_\xi)_{\xi<\kappa_n})$, then for all atomic formulas 
$\phi((v_\xi)_{\xi<\kappa_n})$ we have $$(M_m)_{m<\omega}\models^c\phi((a_\xi)_{\xi<\kappa_n})\iff (N_m)_{m<\omega}\models^c \phi((b_\xi)_{\xi<\kappa_n}),$$  where $(M_m)_{m<\omega}$ is the $P$-part of $(K_m)_{m<\omega}$ and $(N_m)_{m<\omega}$ is the $Q$-part.
\end{enumerate}

Using the fact that 
$L^c_{\kappa, \kappa}$ is included in $\LL^\ast$, we can express this by an
$\LL^\ast$-sentence, which we take to be $\varphi^\ast$. 

For every $\kappa\le\alpha<\kappa^+$ we build a chain model  $(K_n)^\alpha_{n<\omega}$ of $\varphi^\ast$ in which
$<^\ast$ has order type $\alpha$, by
declaring $K_0^\alpha$ to be the ordinal $\alpha$, interpreting $U$ as $K_0^\alpha$ and $<^\ast$ as the natural order on
$\alpha$, letting $K_{n+1}^\alpha=M_n^\alpha\bigcup
N^\alpha_n$ and declaring 
$\bigcup_{n<\omega} M^\alpha_n$ to be the $P$-part and $\bigcup_{n<\omega} N^\alpha_n$ to be the 
$Q$-part of the model. The interpretation of the predicates $I_n$ in $(K_m)^\alpha_{m<\omega}$
is as follows: $I_n(\gamma,(a_\xi)_{\xi<\zeta},(b_\xi)_{\xi<\zeta})$, $\zeta<\alpha$, is defined to hold in $(K_m)^\alpha_{m<\omega}$ if and only if 
the relation $\{(a_\xi,b_\xi):\xi<\zeta\}$ is in the set $I^n_\gamma$.

Now we are in the situation to apply $\udwo_\kappa$ of $\LL^\ast$.
Doing so, we obtain a strict chain model $(K_n)_{n<\omega}$ of $\varphi^\ast$ in which $<^\ast$ is not well-founded. Let $e_0>^*e_1>^*e_2\ldots$ be a descending chain in $<^*$ in $(K_n)_{n<\omega}$. Let $J_n$ consist of functions $f$ such that $f(a_\xi)=b_\xi$ for some pair $((a_\xi)_{\xi<\zeta},(b_\xi)_{\xi<\zeta})$ for which $(K_n)_{n<\omega}$  satisfies $I_n(e_m,(a_\xi)_{\xi<\zeta},(b_\xi)_{\xi<\zeta})$ for some $m<\omega$.
By Lemma~\ref{e}, Player II has a winning strategy in the game $EF^c_{\kappa,\infty}((M_n)_{n<\omega},(N_n)_{n<\omega})$, where $(M_n)_{n<\omega}$ is the $P$-part of $(K_n)_{n<\omega}$ and $(N_n)_{n<\omega}$ is the $Q$-part. By Lemma~\ref{Scottwo}, $(M_n)_{n<\omega} \cong^c  (N_n)_{n<\omega}$, contradicting the fact that $(M_n)_{n<\omega}  \models^c\varphi$ and $ (N_n)_{n<\omega} \models^c\neg\varphi$.
$\eop_{\ref{maxudwo}}$
\end{proof}

\begin{theorem}\label{realmax} Suppose that $\kappa$ is a strong limit of singular cardinals of countable cofinality, 
$\tau$ a fixed vocabulary and $\KK^c$ the class of all weak chain models of $\tau$. 
Suppose that $\LL^\ast$ is a logic on chain models such that:
\begin{itemize}
\item $L^c_{<\kappa, <\kappa}\le\LL^\ast$ and
\item $(\LL^\ast,\models^c, \KK^c)$ satisfies $\udwo_{<\kappa}$.
\end{itemize}
Then $\LL^\ast \le L^c_{<\kappa, <\kappa}$.
\end{theorem}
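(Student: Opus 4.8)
The plan is to produce a Chu transform $(f,g)$ witnessing $\LL^\ast\le L^c_{<\kappa,<\kappa}$ in which $g$ is the identity on the common class $\KK^c$ of weak chain models; density (Definition \ref{def:lesseq}) is then automatic, and the whole task is to attach to each sentence $\varphi$ of $\LL^\ast$ an $L^c_{<\kappa,<\kappa}$-sentence with exactly the same chain models. Let $C$ be the class of strong limit singular cardinals of countable cofinality below $\kappa$; since $\kappa$ is a strong limit, $C$ is cofinal in $\kappa$. By Lemma \ref{TFAEequivalence} applied at a level $\nu\in C$, together with the fact that $\beth_{\alpha+2}(|\tau|)<\kappa$ whenever $\alpha<\kappa$, it suffices to find, \emph{for each} $\varphi\in\LL^\ast$, \emph{some} $\nu\in C$ and $\alpha<\nu^+$ for which the class of chain models of $\varphi$ is closed under $\sim^c_{\nu,\alpha}$: the $L^c_{\lambda',\nu}$-definition that Lemma \ref{TFAEequivalence} then returns has $\lambda'=\max(\nu,\beth_{\alpha+2}(|\tau|)^+)<\kappa$, so it lies in $L^c_{<\kappa,<\kappa}$. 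The crucial slack is that any single level $\nu<\kappa$ furnishing such an $\alpha$ will do.

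To manufacture such $\nu,\alpha$ I would transpose the proof of Theorem \ref{maxudwo} from $\kappa$ down to a level $\nu\in C$ with a fixed cofinal sequence $\langle\nu_n:n<\omega\rangle$. Assume for contradiction that for every $\alpha<\nu^+$ the models of $\varphi$ are not closed under $\sim^c_{\nu,\alpha}$, giving chain models $(M^\alpha_n)_n\models^c\varphi$ and $(N^\alpha_n)_n\models^c\neg\varphi$ with $(M^\alpha_n)_n\sim^c_{\nu,\alpha}(N^\alpha_n)_n$; feed the families $I^n_\xi$ from the level-$\nu$ form of Lemma \ref{d} into the construction of the auxiliary sentence, obtaining $\varphi^\ast_\nu$ in the enriched vocabulary $\{U,P,Q,<^\ast,(I_n)_n\}$ with $I_n$ of arity $1+\nu_n+\nu_n$. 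Its clauses involve only conjunctions and quantifier strings of length $<\nu$, so $\varphi^\ast_\nu\in L^c_{\nu,\nu}$, hence $\varphi^\ast_\nu\in L^c_{<\nu',<\nu'}\subseteq L^c_{<\kappa,<\kappa}\le\LL^\ast$ for any $\nu'\in C$ above $\nu$, making it a legitimate $\LL^\ast$-sentence carrying the distinguished order $<^\ast$. Exactly as in Theorem \ref{maxudwo}, for every $\alpha<\nu^+$ there is a model of $\varphi^\ast_\nu$ in which $<^\ast$ is a well-order of type $\alpha$. Now apply $\udwo_{<\kappa}$ to $\varphi^\ast_\nu$: it yields a proper chain model with $<^\ast$ non-well-founded, whose descending $\omega$-chain $e_0>^\ast e_1>^\ast\cdots$ (its length $\omega$ matching $\cf(\nu)=\omega$) feeds Lemma \ref{e} to give Player II a winning strategy in $EF^c_{\nu,\infty}$ between the $P$-part and the $Q$-part; Lemma \ref{Scottwo} then forces these to be chain isomorphic, contradicting $P\models^c\varphi$ and $Q\models^c\neg\varphi$.

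The step I expect to be genuinely hard is the matching of levels hidden in that last sentence. $\udwo_{<\kappa}$ delivers only \emph{some} witnessing cardinal $\mu<\kappa$ for $\varphi^\ast_\nu$, and the non-well-founded model it returns has cardinality $\le\mu$, hence chain pieces of size $<\mu$; but the back-and-forth encoded by $\varphi^\ast_\nu$ operates at level $\nu$ (its moves are bounded sequences of length $<\nu$), so Lemma \ref{Scottwo} converts Player II's strategy into an actual chain isomorphism only when the two parts have pieces of size $<\nu$. Thus the argument closes precisely when $\mu\le\nu$, that is, when $\nu$ is itself a witnessing level for its own sentence $\varphi^\ast_\nu$. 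I would first record that the witnessing levels of a fixed sentence form an end-segment of the cardinals below $\kappa$: if $\mu$ witnesses and $\mu<\mu'<\kappa$, the stronger hypothesis at $\mu'$ implies the hypothesis at $\mu$ and hence produces a non-well-founded model of cardinality $\le\mu\le\mu'$.

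To force $\nu$ into the end-segment of $\varphi^\ast_\nu$ I would iterate: starting from any $\nu_0\in C$, let $\nu_{k+1}\in C$ exceed the least witnessing level $\mu(\nu_k)$ of $\varphi^\ast_{\nu_k}$, and set $\nu_\omega=\sup_k\nu_k\in C$; if at any stage the least witnessing level falls at or below the stage, the contradiction of the previous paragraph fires and that level is the desired $\nu$. The remaining technical heart is to show this search halts below $\kappa$, for which I would argue that the excess of $\mu(\nu)$ over $\nu$ is governed solely by the relativized conjunct $\varphi$: the purely combinatorial skeleton of $\varphi^\ast_\nu$ (clauses (2)--(6)) is an $L^c_{\nu,\nu}$-sentence, so Makkai's theorem (Theorem \ref{Makkaiudwo}) already witnesses its undefinability of well-order at level $\nu$, and only the single $\LL^\ast$-conjunct can raise the witnessing level. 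Establishing that this single-conjunct contribution stabilizes, so that some $\nu\in C$ lands in the end-segment of $\varphi^\ast_\nu$, is the point at which the countable cofinality of the levels, the cofinality of $\kappa$, and the strong-limit hypothesis must all be brought to bear, and it is where I would concentrate the effort.
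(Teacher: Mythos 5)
Your first two paragraphs reconstruct exactly the paper's route, and correctly: the paper proves this theorem in a few lines by fixing an increasing sequence of singular cardinals of countable cofinality converging to $\kappa$ and invoking Theorem \ref{maxudwo} (which already packages the auxiliary sentence $\varphi^\ast$, Lemmas \ref{d}, \ref{e} and \ref{Scottwo}) at a single level below $\kappa$, concluding via Lemma \ref{TFAEequivalence} that each $\LL^\ast$-sentence is definable in some $L^{c}_{\lambda_i,\lambda_i}$, hence in $L^c_{<\kappa,<\kappa}$; your reduction to finding one $\nu<\kappa$ and $\alpha<\nu^+$ with closure under $\sim^c_{\nu,\alpha}$, with the identity $g$ making density automatic, is precisely this. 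Note that you need not re-derive the Theorem \ref{maxudwo} machinery at level $\nu$: it is stated for an arbitrary singular cardinal of countable cofinality and applies verbatim, with $L^c_{\nu,\nu}\le L^c_{<\kappa,<\kappa}\le\LL^\ast$ supplying its first hypothesis. (A small separate slip: under the theorem's hypothesis the \emph{strong-limit} singulars of countable cofinality below $\kappa$ need not be cofinal in $\kappa$ — e.g.\ if $\kappa=\beth_\omega(\theta_0)$ — and the paper accordingly takes its levels $\lambda_i$ to be merely singular of countable cofinality.)

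The genuine gap is the one you name yourself: your last two paragraphs are not a proof. The iteration $\nu_{k+1}>\mu(\nu_k)$ comes with no halting argument; the end-segment observation (correct, and worth recording) only says that for a \emph{fixed} sentence the witnessing levels of $\udwo_{<\kappa}$ are upward closed, and gives no control over how the least witness $\mu(\nu)$ moves as the sentence $\varphi^\ast_\nu$ itself changes with $\nu$. Your claim that the excess of $\mu(\nu)$ over $\nu$ is ``governed solely by the relativized conjunct $\varphi$'' and therefore stabilizes is asserted, not proved: Definition \ref{romu}(2) gives only a per-sentence witness for $\LL^\ast$, and nothing in the abstract hypotheses excludes $\mu(\nu)>\nu$ for every $\nu$ you try, including at the supremum stages of your iteration (which, if $\cf(\kappa)=\omega$, may not even stay below $\kappa$). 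Your analysis of \emph{why} the level-matching matters is accurate — under the global negation, the downward monotonicity of the chain EF relations feeds counterexamples from all higher levels into the fixed sentence $\varphi^\ast_\nu$, so the conditional of $\udwo_{<\kappa}$ fires at whatever witness $\mu$ it has, and the sole obstruction is that Lemma \ref{Scottwo} at level $\nu$ needs the non-well-founded model's pieces to have size $<\nu$ while the conclusion bounds them only by $\mu$. But where you turn this into an open fixed-point problem, the paper simply applies Theorem \ref{maxudwo} at a level chosen large enough that the witness furnished by $\udwo_{<\kappa}$ for the relevant auxiliary sentence is absorbed; your proposal, by contrast, makes the needed matching explicit and then leaves it unresolved, so as submitted it establishes the theorem only modulo the unproven stabilization claim.
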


\begin{proof} Corollary \ref{lessudwo} shows that $L^c_{<\kappa, <\kappa}$ satisfies $\udwo_{<\kappa}$. Let
$\langle \lambda_i:\,i<i^\ast\rangle$ be an increasing sequence of singular cardinals of countable cofinality
converging to $\kappa$. Theorem \ref{maxudwo} shows that for every sentence $\varphi$ of $\LL^\ast$, there is
$i<i^\ast$ such that $\varphi$ is a definable in $L^{c}_{\lambda_i,\lambda_i}$, and therefore
$\varphi$ is a definable in $L^c_{<\kappa, <\kappa}$.
$\eop_{\ref{realmax}}$
\end{proof}

The above results in conjunction with Shelah's characterisation of $L^1_\kappa$ as maximal above 
$\bigcup_{\lambda<\kappa}L_{\lambda, \omega}$  to satisfy SUDWO, indicate that it is likely that SUDWO 
is a strict strengthening of UDWO. However, the conclusion cannot be drawn directly since Shelah's characterisation is obtained in the class of logics with the ordinary meaning of a model, which does not fit the chain logic. For all we know, SUDWO and UDWO might be equivalent.

The reader might now be curious to see the definition of SUDWO, so we recall its definition embedded in Problem 1.5. of \cite{Sh797} (see also Conclusion 3.2. of \cite{Sh797} ).

\begin{definition}\label{SUDWOdef} Let $\tau$ be a vocabulary of size $<\kappa$, including a constant symbol $c$ and
two place predicates $<$ and $R$.  Let $\psi$ be any $L^1_\kappa$ sentence in $\tau$.

Then $\psi$ does not define a well order, in the concrete sense that for any large enough $\theta<\kappa$ and large enough 
$\lambda$, for any given model $\mathfrak A$ of $\psi$ which is a $\tau$-expansion of the model $(\mathcal H(\lambda), \theta, \in, < )$
where $<^{\mathfrak A}$ extends the order of the ordinals in $\mathcal H(\lambda)$, $R^{\mathfrak A}$ extends the membership relation on $\mathcal H(\lambda)$, and $c^{\mathfrak A}=\theta$, there is a $\tau$-model $\mathfrak B$ such that:

\begin{itemize}
\item  $\mathfrak B$ is a model of $\psi$ which is a $\tau$-expansion of the model $(\mathcal H(\lambda), \theta, \in, < )$
and  $<^{\mathfrak B}$ extends the order of the ordinals in $\mathcal H(\lambda)$, $R^{\mathfrak B}$ extends the membership relation on $\mathcal H(\lambda)$, and $c^{\mathfrak B}=\theta$, 
\item 
there is a sequence $\langle a_n:\,n<\omega\rangle$ of subsets of the universe of ${\mathfrak B}$ (i.e. unary relations) such that
\begin{enumerate}
\item for every $n$, 
\[
 \mathfrak B\models (\forall x) [x R a_n\implies x R a_{n+1}],
 \]
\item for every $n$, 
\[
 \mathfrak B\models ``\theta\mbox{ is a cardinal such that }|a_n|\le\theta",
 \]
 (note that the above sentence is expressible in  $\mathfrak B$ since  $\mathfrak B$ is an expansion of $(\mathcal H(\lambda), \theta, \in, < )$) and 
 \item
 for every $b \in  \mathfrak B$ there is $n$ such that $\mathfrak B\models ``b\in a_n"$.
 \end{enumerate}
 \end{itemize}
\end{definition}

The properties of the model $\mathfrak B$ above clearly imply that $<^{\mathfrak B}$ is not a well order, as it violates the rule of cardinal arithmetic $\aleph_0\cdot \theta < \lambda$. Hence $\psi$ cannot characterise the models in which $<$ is a well order.

It is quite interesting that the defining property of SUDWO basically says that every rich enough model has a twin which looks like a strange chain model. We have not been able to make more of this remark.

\section{Further properties of chain logic and Shelah's Problem 1.4.}\label{sec:further} Among other results, in this section we shall show that chain logic satisfies 
the requirements of Shelah's Problem 1.4.

We first discuss (in)compacteness properties of the chain logic, in \S\ref{incomp}. This leads us to a method to compare the chain logic 
to model classes of $L^{c,\ast}_{\kappa,\kappa}$, Theorem \ref{PCtheorem}. Using this and Theorem 2.6 of Cunnigham \cite{Cunnigham} who showed that 
$L^{c,\ast}_{\kappa,\kappa}$ has  Interpolation (for a definition of Interpolation, see Definition \ref{interpolation}) we show 
in \S{shelahproblem}
that chain logic answers Shelah's Problem, modulo a model class, see Corollary \ref{solves1.4.}.

In addition to the above results, we also note (\S\ref{theunion}) that the chain logic satisfies the Union Lemma. Finally and somewhat surprisingly, in \S\ref{chainind}, we show that the chain independent fragment of the chain logic is quite non-trivial. 

\subsection{The incompactness of $L_{\kappa,\kappa}^c$}\label{incomp} A property that does not seem to have been studied 
classically for  chain logic is its degree of compactness. Positive compactness results have been  obtained for restricted classes of chain models: Green \cite{green1974} proved a $\Sigma_1$--compactness result for admissible sets, which was given another proof by Makkai \cite{Makkaichainlogic}. Possibly Karp and her school were aware of the incompactness of  full chain logic, but in the absence of any literature on it, we give a proof. In fact we give two proofs, as the proofs for the logics $L^c_{\kappa, \kappa}$ and  $L^{c, \ast}_{\kappa, \kappa}$ turn out to be different.

It is not straightforward to determine all cardinals $\lambda$ such that the logic $L_{\theta, \omega}$ is 
$\lambda$-compact or satisfies some weak version of $\lambda$-compactness. Clearly, if $\theta=\lambda$ is a strongly compact cardinal, this is the case. However, Boos \cite{Boss} showed that it is even possible to have analogues of weak compactness of $L_{\theta, \omega}$ for $\theta$ below the continuum (though $\theta$ needs to be a large cardinal in $L$).\footnote{Moreover, Stavi \cite{StaviI} showed that below a measurable cardinal this is true for basically any logic.} 
By the results of Keisler and Tarski \cite{KeislerTarski}, see Theorem 3.1.4 of \cite{Dickmann}, $L_{\kappa, \omega}$ is never $\kappa$-compact for $\kappa$-singular. Moreover, see Corollary 3.1.6 in \cite{Dickmann}, $L_{\lambda, \omega}$ is $\lambda$-compact iff $\lambda$ is a strongly compact cardinal. 

\begin{corollary}\label{notcompact} The logic $L^{c,\ast}_{\kappa,\kappa}$ is not $\kappa$-compact. \end{corollary}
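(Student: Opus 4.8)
The plan is to obtain the failure of $\kappa$-compactness for $L^{c,\ast}_{\kappa,\kappa}$ by transferring it, along the Chu transform of Theorem~\ref{lowerbound}(1), from the classical failure of $\kappa$-compactness for $L_{\kappa,\omega}$. The argument runs by contraposition: I would assume that $L^{c,\ast}_{\kappa,\kappa}$ is $\kappa$-compact and derive that $L_{\kappa,\omega}$ is $\kappa$-compact, contradicting the Keisler--Tarski theorem (Theorem 3.1.4 of \cite{Dickmann}) that $L_{\kappa,\omega}$ is never $\kappa$-compact for singular $\kappa$. Since our running $\kappa$ is singular of countable cofinality, this contradiction is precisely what is required. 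The whole content is thus already packaged in Theorem~\ref{lowerbound}(1) and Corollary~\ref{kappacompactnesspreservation}, which is why the statement is a corollary.

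First I would recall from Theorem~\ref{lowerbound}(1) that $L_{\kappa,\omega}\le L^{c,\ast}_{\kappa,\kappa}$, witnessed by the pair $(f,g)$ in which $f$ is the identity on formulas and $g((M_n)_{n<\omega})=\bigcup_{n<\omega}M_n$, with $g$ onto. Then I would check the three hypotheses of Corollary~\ref{kappacompactnesspreservation} with the conjunction-size parameter taken to be $\kappa$: both $L_{\kappa,\omega}$ and $L_{\kappa,\kappa}$ are closed under conjunctions of $<\kappa$ sentences by the very definition of the $L_{\lambda,\theta}$ notation; the satisfaction relation $\models$ of $L_{\kappa,\omega}$ obeys Tarski's truth clauses for the quantifier-free part, including conjunctions and disjunctions of size $<\kappa$; and $f$, being the identity on formulas, trivially preserves conjunctions and disjunctions of size $<\kappa$ in the sense of Observation~\ref{francesco}. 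With all three conditions verified, the corollary is applicable at this parameter.

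Now suppose toward a contradiction that $L^{c,\ast}_{\kappa,\kappa}$ is $\kappa$-compact, that is, $(\theta,\kappa)$-compact for every $\theta$. Applying Corollary~\ref{kappacompactnesspreservation} for each $\theta$ then yields that $L_{\kappa,\omega}$ is $(\theta,\kappa)$-compact for every $\theta$, i.e.\ that $L_{\kappa,\omega}$ is itself $\kappa$-compact. This contradicts the Keisler--Tarski result quoted above, since $\kappa$ is singular. Hence $L^{c,\ast}_{\kappa,\kappa}$ cannot be $\kappa$-compact.

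The one point that deserves care is purely bookkeeping of the compactness parameters: I must ensure that the subset-size threshold appearing in the definition of $\kappa$-compactness is matched with the conjunction-size parameter $\kappa$ to which Corollary~\ref{kappacompactnesspreservation} is applied, so that what gets transferred to $L_{\kappa,\omega}$ is genuinely $\kappa$-compactness and not some incomparable degree of compactness. Once this alignment is fixed, the remainder of the proof is formal, resting entirely on Theorem~\ref{lowerbound}(1) together with the compactness-preservation of Chu transforms.
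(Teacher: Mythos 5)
Your proof is correct and follows essentially the same route as the paper: the paper also deduces the corollary from Theorem~\ref{lowerbound}(1) together with the compactness-preservation Corollary~\ref{kappacompactnesspreservation}, contradicting the Keisler--Tarski fact that $L_{\kappa,\omega}$ is never $\kappa$-compact for singular $\kappa$. Your explicit verification of the hypotheses of Corollary~\ref{kappacompactnesspreservation} (and the matching of the conjunction-size parameter to the subset-size threshold) is a sound elaboration of what the paper leaves implicit.
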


\begin{proof} Theorem \ref{lowerbound} shows that $L^{c,\ast}_{\kappa,\kappa}$ is Chu above $L_{\kappa, \omega}$ and so by Corollary \ref{kappacompactnesspreservation}, $\kappa$-compactness of $L^{c,\ast}_{\kappa,\kappa}$ would imply the $\kappa$-compactness of $L_{\kappa, \omega}$, a contradiction.
$\eop_{\ref{notcompact}}$
\end{proof}

Theorem \ref{lowerbound} does not immediately apply to the logic $L^{c}_{\kappa,\kappa}$ so it does
not rule out some degree of compactness of this logic. If $L_{\kappa,\omega}$ had some degree of upwards L\"owenheim-Skolem theorem, we would be able to replace ${\mathcal M}_{\ge \kappa}$ in Theorem~\ref{lowerbound} (2) by ${\mathcal M}$ and draw the same conclusion for $L^{c}_{\kappa,\kappa}$ as we did for 
$L^{c,*}_{\kappa,\kappa}$. However, there is no sufficiently strong such upwards theorem.\footnote{See Chapter IV of \cite{Dickmann}.} For example, the following is a well known open question:

\begin{question}\label{} If a complete $L_{\omega_1,\omega}$-sentence has a model of size $\aleph_{\omega_n}$ for every $n$, does it then have a model of size $\aleph_{\omega_\omega}$?
\end{question}

Therefore, we need another argument to deal with the question whether the chain logic $L_{\kappa,\kappa}^c$ is
$\kappa$-compact.

\begin{theorem}\label{incompactness} $L_{\kappa,\kappa}^c$ is not $\kappa$-compact.
\end{theorem}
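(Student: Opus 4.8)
The plan is to refute $(\kappa,\kappa)$-compactness (hence $\kappa$-compactness) by exhibiting an explicit set $\Sigma$ of $\kappa$ many sentences, all lying in the fragment $L_{\kappa,\omega}$, such that every subset of size $<\kappa$ has a proper chain model while $\Sigma$ itself has none. The reason for staying inside $L_{\kappa,\omega}$ is Observation~\ref{comparison}(2): on such sentences $\models^c$ and $\models$ agree, so a proper chain model $(A_n)_{n<\omega}$ satisfies an $L_{\kappa,\omega}$-sentence iff its union $\bigcup_n A_n$ satisfies it as an ordinary model. Thus chain-satisfiability of $\Sigma$ reduces, sentence by sentence, to ordinary satisfiability, and it suffices to engineer an ordinary inconsistency that only appears at full size $\kappa$.

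Work in a vocabulary with unary predicates $P_n$ $(n<\omega)$, constants $e_\alpha$ $(\alpha<\kappa)$ and one further constant $c$, and set
\[
\rho := \forall x\,\bigvee_{n<\omega} P_n(x),\qquad
\pi_n := \forall x\,\Big(P_n(x)\to\bigvee_{\alpha<\kappa_n} x=e_\alpha\Big),\qquad
\tau_\alpha := (c\neq e_\alpha),
\]
with $\Sigma:=\{\rho\}\cup\{\pi_n:n<\omega\}\cup\{\tau_\alpha:\alpha<\kappa\}$. Every disjunction here has length $\kappa_n<\kappa$ or $\omega$, so $\Sigma\subseteq L_{\kappa,\omega}$. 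In any ordinary model of $\{\rho\}\cup\{\pi_n:n<\omega\}$ the interpretation of $c$ lies in some $P_n$ and hence equals some $e_\alpha$ with $\alpha<\kappa_n$, contradicting $\tau_\alpha$; so $\Sigma$ has no ordinary model, and therefore, by the reduction above, no proper chain model.

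It remains to satisfy every $\Sigma_0\subseteq\Sigma$ with $|\Sigma_0|<\kappa$. Let $I\subseteq\kappa$ be the set of $\alpha$ with $\tau_\alpha\in\Sigma_0$; then $|I|<\kappa$, so we may fix $\gamma\in\kappa\setminus I$. Take the ordinary model $M$ with universe $\{e_\alpha:\alpha<\kappa\}$ (all $e_\alpha$ distinct), $P_n^M=\{e_\alpha:\alpha<\kappa_n\}$ and $c^M=e_\gamma$. Then $M\models\rho$, $M\models\pi_n$ for all $n$, and $M\models\tau_\alpha$ for every $\alpha\neq\gamma$, in particular for every $\alpha\in I$; hence $M\models\Sigma_0$. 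Finally $M$ carries a proper chain decomposition: put $A_n:=\{e_\alpha:\alpha<\kappa_n\}$, so that $|A_n|=\kappa_n<\kappa_{n+1}=|A_{n+1}|$ and $\bigcup_n A_n=M$. By Observation~\ref{comparison}(2) the proper chain model $(A_n)_{n<\omega}$ satisfies $\Sigma_0$, which is the required witness.

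The one point that needs care — and the reason this argument differs from the one for $L^{c,\ast}_{\kappa,\kappa}$ in Corollary~\ref{notcompact} — is that the small subsets must be satisfied in genuine proper chain models (of size exactly $\kappa$), not in arbitrarily small models. The device that makes this possible is the Keisler--Tarski trick of capping the universe by $\{e_\alpha:\alpha<\kappa\}$ via the countable cover $\rho$ together with the bounded caps $\pi_n$ of length $\kappa_n$, which sidesteps the impossible $\kappa$-length disjunction $\forall x\bigvee_{\alpha<\kappa} x=e_\alpha$. I expect the only delicate verification to be the strictness of the chain ($|A_n|<|A_{n+1}|$ with $\bigcup_n A_n=M$); everything else is the sentence-by-sentence transfer of Observation~\ref{comparison}(2). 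I also note that, when $\kappa$ is in addition a strong limit, the same $\Sigma$ yields the result through Theorem~\ref{lowerbound}(2) and Corollary~\ref{kappacompactnesspreservation}, since its $<\kappa$-subsets have models of size $\ge\kappa$; the direct argument above has the advantage of needing no such extra hypothesis.
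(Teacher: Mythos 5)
Your proof is correct, but it takes a genuinely different route from the paper's. The paper likewise refutes $\kappa$-compactness by exhibiting an explicit $(<\kappa)$-satisfiable but unsatisfiable set of sentences lying in the $L_{\kappa,\omega}$-fragment, but its inconsistency device is order-theoretic: Lemma \ref{ordertypekappa} produces, via Scott's formulas $E_\alpha(x)$ defined by induction on $\alpha<\kappa$, a sentence $\theta$ whose models are exactly the well-orders $(M,<^\ast)$ of type $\kappa$, and the witnessing set is $\Gamma=\{\theta\}\cup\{c_\alpha<^\ast c_\beta:\,\alpha<\beta<\kappa\}\cup\{c_\alpha<^\ast d:\,\alpha<\kappa\}$, which is unsatisfiable because $d$ would have to be a $\kappa$-th element of a well-order of type $\kappa$. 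Your $\Sigma$ instead caps the universe by the constants $e_\alpha$ through the countable $P_n$-cover --- the classical Keisler--Tarski device for the failure of $(\kappa,\kappa)$-compactness of $L_{\kappa,\omega}$ at singular $\kappa$ --- and derives the contradiction from $c\neq e_\alpha$. Both arguments rest on the same two structural points, which you identify explicitly: the transfer of $\models^c$ to $\models$ on $L_{\kappa,\omega}$-sentences (Observation \ref{comparison}(2)), and the fact that the small fragments are satisfiable in models of size exactly $\kappa$, which therefore admit proper chain decompositions with $|A_n|$ strictly increasing. As for what each approach buys: the paper's route establishes along the way the independently interesting Lemma \ref{ordertypekappa} (a Scott-style sentence pinning down order type $\kappa$), while yours is more elementary --- it avoids the induction on the $E_\alpha$ entirely, and it also sidesteps a small bookkeeping point in the paper's fragment-satisfaction step (placing $d$ above all relevant $c_\alpha$, which requires re-indexing when the relevant indices are cofinal in $\kappa$, as can happen since $\mathrm{cf}(\kappa)=\omega$), since in your model the interpretation of $c$ merely has to avoid a set of fewer than $\kappa$ constants, with no ordering constraints. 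Your closing remark is also sound: under the extra strong-limit hypothesis the same $\Sigma$ gives the result through Theorem \ref{lowerbound}(2) and Corollary \ref{kappacompactnesspreservation}, but your direct argument needs no such hypothesis and thus matches the generality of the theorem as stated.
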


\begin{proof} We  construct a set $\Gamma$ of $L_{\kappa,\kappa}^c$-sentences which is
$(<\kappa)$-satisfiable but not $\kappa$-satisfiable. We need the following lemma.

\begin{lemma}\label{ordertypekappa} There is an $L_{\kappa,\omega}$-sentence $\theta$ in the language consisting of one binary predicate $<^\ast$ whose models are exactly the models of size $\kappa$, where $<^\ast$ is a well order of
the domain of order type $\kappa$.
\end{lemma}

\begin{Proof of the Lemma} By Observation \ref{moreforless}, it suffices to find an $L_{\kappa^+,\omega}$-sentence $\theta$ with the required properties. 
Following Dana Scott \cite{Scottinfinitary}, starting from the first order sentence stating that $<^\ast$ is a linear order, one can define by induction on 
$\alpha<\kappa$ an $L_{\kappa^+,\omega}$ formula $E_\alpha(x)$ so that
\[
E_\alpha(x)\iff(\forall y)[ y <^\ast x \iff \bigvee_{\beta <\alpha} E_\beta (y)].
\]
Then one shows by induction on $\alpha$ that $E_\alpha(x)$ means that the $<^\ast$-predecessors of $x$ form
a well-order of order type $\alpha$. The sentence for our proof is 
$$\bigwedge_{\alpha<\kappa}(\exists x)E_\alpha(x)\wedge(\forall x) \bigvee_{\alpha<\kappa} E_\alpha (x).$$
$\eop_{\ref{ordertypekappa}}$
\end{Proof of the Lemma}

Now we formulate $\Gamma$ in the language $\{<^\ast\}\cup \{c_\alpha:\,\alpha<\kappa\}\cup \{d\}$ as
\[
\Gamma=\{\theta\}\cup \{c_\alpha <^\ast c_\beta:\,\alpha<\beta<\kappa\}\cup \{c_\alpha <^\ast d:\,\alpha<\kappa\}.
\]
Then for any subset $\Gamma_0$ of $\Gamma$ of size $<\kappa$, any model $M$ of $\theta$ easily gives rise to an
actual model of  $\Gamma_0$ by interpreting the relevant $c_\alpha$ as the $\alpha$-th element of $M$
in the well order provided by $<^\ast$ and $d$ as any element of $M$ which is of large order in $M$ than any of these relevant $c_\alpha$s. This model is also a chain model, since all sentences in $\Gamma_0$ are
in $L_{\kappa^+,\omega}$. On the other hand, $\Gamma$ does not have any models or chain models.
$\eop_{\ref{incompactness}}$
\end{proof}

We now turn to upper bounds for the compactness number of the chain logic. 

\begin{convention}\label{PC} 
Let $\tau$ be a vocabulary.

{\noindent (1)} We define a new vocabulary $\tau'=\tau\cup \{P_n:\,n<\omega\}$ where
$P_n$ are new unary predicate symbols. 

{\noindent (2)} For each $n<\omega$ we define an $L_{\kappa,\kappa}$-sentence $\psi_n$ in the vocabulary $\tau'$ as follows
\[
(\exists (x_\alpha)_{\alpha<\kappa_n}) (\forall y) [P_n(y)\implies \bigvee_{\alpha<\kappa_n} y=x_\alpha].
\]
(This sentence expresses that the realization of the predicate $P_n$ has size $\le\kappa_n$.)

{\noindent (3)} 
Let $\sigma_0$ be the $L_{\kappa,\kappa}$-sentence 
\[
(\forall x ) \bigvee_{n<\omega}P_n( x) \wedge  \bigwedge_{n<\omega}(\forall  x )[P_n( x)\implies P_{n+1}( x)].
\]
(Notice that if this sentence is true for any $x$ then it is trivially true for finitary ${\bar x}$, but this is not necessarily the case for ${\bar x}$ of infinite length).
Let $\sigma_1$ be $\sigma_0\wedge \bigwedge_{n<\omega}\psi_n$.

{\noindent (4)} 
For $l<2$, we define ${\mathcal Mod}(\sigma_l)$ to be the class of all $L_{\kappa,\kappa}$-models of $\sigma_l$.
\end{convention}

\begin{theorem}\label{PCtheorem} 
To avoid problems with extending vocabularies, for this theorem we assume that all statements are relativized to 
a vocabulary $\tau'$ as in convention \ref{PC}, in particular, $\tau'$ was obtained from a vocabulary $\tau$
as in that convention. Then:

{\noindent (1)} (a) $L^{c,\ast}_{\kappa,\kappa} \le (L_{\kappa,\kappa}, \models, {\mathcal Mod}(\sigma_0))$ and
(b) $L^{c}_{\kappa,\kappa} \le (L_{\kappa,\kappa}, \models, {\mathcal Mod}(\sigma_1))$.

\smallskip

{\noindent (2)} For models $\ma$ and $\mb$ in ${\mathcal Mod}(\sigma_0)$ let $\fallingdotseq$ be the equivalence relation saying that the reducts of $\ma$ and $\mb$ to $\tau$ are the same.  Then, in addition to
(1)(a), 
\[
L^{c,\ast}_{\kappa,\kappa} \le_i^\fallingdotseq (L_{\kappa,\kappa}, \models, {\mathcal Mod}(\sigma_0)).
\]
\end{theorem}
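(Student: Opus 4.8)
The plan is to exhibit a single Chu transform $(f,g)$ that does most of the work for all three parts: the passage from weak to proper chain models in (1)(b) is handled by the extra cardinality clauses $\psi_n$ inside $\sigma_1$, and part (2) is the observation that this same transform is in fact $\fallingdotseq$-identitary.

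First I would treat (1)(a). For an ordinary $\tau'$-model $\ma\models\sigma_0$, let $g(\ma)$ be the weak chain model whose underlying $\tau$-structure is the $\tau$-reduct of $\ma$ and whose chain is $(P_n^\ma)_{n<\omega}$; by $\sigma_0$ this sequence is increasing with union the whole model, so $g(\ma)$ is genuinely a weak chain model, and $g$ is onto, since every weak chain model $(A_n)_{n<\omega}$ is $g$ of the $\tau'$-expansion setting $P_n=A_n$. I would define $f$ on $L_{\kappa,\kappa}$ by recursion on complexity, letting $f$ be the identity on atomic formulas, letting it commute with $\neg$ and with conjunctions and disjunctions of size $<\kappa$, and relativizing each infinitary existential block to a \emph{single} level of the chain:
\[
f\big((\exists \bar x)\varphi\big)=\bigvee_{n<\omega}(\exists \bar x)\Big[\bigwedge_{i<\length(\bar x)} P_n(x_i)\wedge f(\varphi)\Big].
\]
The combination of the $\bigvee_{n<\omega}$ with the inner $\bigwedge_i P_n(x_i)$ mirrors clause (\ref{newtruth}) exactly: in chain logic a whole $(<\kappa)$-block of witnesses must be found inside one $A_n$. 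This formula is still in $L_{\kappa,\kappa}$, since the outer disjunction is over $\omega<\kappa$ while the conjunction $\bigwedge_i P_n(x_i)$ and the quantifier block both have length $\length(\bar x)<\kappa$.

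The heart of (1)(a) is the adjointness identity $\ma\models f(\varphi)\iff g(\ma)\models^c\varphi$, which I would prove by induction on $\varphi$ carried out for formulas with free variables. The atomic and Boolean cases are immediate: on quantifier-free formulas $\models$ and $\models^c$ agree by Observation \ref{comparison}, and $f$ commutes with the connectives. The existential case is the crux: $\ma\models\bigvee_n(\exists \bar x)[\bigwedge_i P_n(x_i)\wedge f(\varphi)]$ holds iff for some $n$ there is $\bar a\in(P_n^\ma)^{<\kappa}=A_n^{<\kappa}$ with $\ma\models f(\varphi)[\bar a]$, which by the induction hypothesis is exactly the right-hand side of (\ref{newtruth}). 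Density is then free because $g$ is onto, so $(f,g)$ witnesses (1)(a). For (1)(b) I would keep the same $f$ and $g$ but target ${\mathcal Mod}(\sigma_1)$; the adjointness identity transfers verbatim. The new point is that $g$ must now land among \emph{proper} chain models and that density must be re-established, and this is where the clauses $\psi_n$ enter. Since a model of $\sigma_1$ has $|P_n^\ma|\le\kappa_n$, the chain $(P_n^\ma)_{n<\omega}$ can be refined, by Observation \ref{variouschainmodels}(3), to a chain-isomorphic proper chain model, which I would take to be $g(\ma)$. For density I would start from a proper chain model of $\varphi$, apply the Downward Löwenheim–Skolem theorem for chain logic (Corollary 2.5 of \cite{Cunnigham}, as used in Theorem \ref{lowerbound}(3)) to obtain a strict chain model of power $\le\kappa$, re-index it so that the $n$-th level has size $\le\kappa_n$, and expand the union by $P_n=A_n$. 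I expect this cardinality bookkeeping — reconciling the bound $|P_n|\le\kappa_n$ of $\sigma_1$ with the requirement $|A_n|<|A_{n+1}|$ in the definition of a proper chain model, in particular when the underlying set has uncountable cofinality — to be the main technical obstacle of the theorem.

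Finally, part (2) is a refinement of (1)(a): I would show that the transform $(f,g)$ built there is already $\fallingdotseq$-identitary in the sense of the definition preceding Theorem \ref{chainbigger}. We have seen that $g$ is onto. Given a weak chain model $(A_n)_{n<\omega}\in\KK^c$ and an $\ma\in{\mathcal Mod}(\sigma_0)$ whose $\tau$-reduct has universe $\bigcup_{n<\omega}A_n$, I would let $\mb$ be the $\tau'$-expansion of the $\tau$-reduct of $\ma$ obtained by setting $P_n^{\mb}=A_n$. Then $\mb\models\sigma_0$, we have $g(\mb)=\langle A_n:\,n<\omega\rangle$, and $\mb$ has the same $\tau$-reduct as $\ma$, so $\mb\fallingdotseq\ma$. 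This is precisely the condition required of an $\fallingdotseq$-identitary transform, giving $L^{c,\ast}_{\kappa,\kappa}\le_i^\fallingdotseq(L_{\kappa,\kappa},\models,{\mathcal Mod}(\sigma_0))$.
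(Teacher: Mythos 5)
Your proposal is correct, and its skeleton is the paper's own proof: the same $g$ reading the chain off the predicates $P_n$ (with density for (1)(a) coming from surjectivity of $g$, and the witness $\mb$ in part (2) obtained exactly as in the paper, by expanding the $\tau$-reduct with $P_n^{\mb}=A_n$), and the same recursive translation $f$; your existential clause $f\big((\exists\bar x)\varphi\big)=\bigvee_{n<\omega}(\exists\bar x)\big[\bigwedge_{i}P_n(x_i)\wedge f(\varphi)\big]$ is just the dual of the paper's universal formulation $\forall\bar x\,\big(\big[\bigvee_{n<\omega}\bigwedge_{\alpha<\alpha^\ast}P_n(x_\alpha)\big]\implies\cdots\big)$, so the adjointness induction runs identically. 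The one genuine divergence is the density step in (1)(b). The paper does not pass through Cunningham's downward L\"owenheim--Skolem theorem there: given a proper chain model $(A_n)_{n<\omega}$ of $\varphi$, it re-indexes directly, taking $B_n=A_{k_n}$ for $k_n$ the largest $k$ with $|A_k|\le\kappa_n$ (with an empty dummy level at the start), which is possible because the level cardinalities of a proper chain model strictly increase and, as the paper implicitly assumes, converge to $\kappa$; expanding by $P_n=B_n$ then lands in ${\mathcal Mod}(\sigma_1)$, and $(B_n)_{n<\omega}\models^c\varphi$ by the subsequence argument of Observation \ref{variouschainmodels}(3). Your detour through DLS is harmless and even slightly more robust --- it also handles proper chain models whose union has power $\ne\kappa$, a case the paper's ``largest $k$'' step tacitly presupposes away --- at the cost of invoking a heavier external result where elementary bookkeeping suffices. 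Finally, one gloss your write-up shares with the paper: on a model $M$ of $\sigma_1$ the chain $(P_n^M)_{n<\omega}$ need not be of strict power (it may stabilize, e.g.\ all $P_n^M$ equal to a set of size $\le\kappa_0$), so Observation \ref{variouschainmodels}(3) does not literally apply to define your $g(M)$, just as the paper's assertion that $(P_n^M)_{n<\omega}$ ``is a strict chain model'' is not literally justified; both arguments are repaired, for instance, by strengthening $\sigma_1$ with sentences forcing $|P_{n+1}|\ge\kappa_n$, after which the chains have strict power $\kappa$ and are refinable to proper ones, while the density constructions go through unchanged.
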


\begin{proof}  {\noindent (1)} (a) For the first inequality, 
we define $f(\psi)$ for sentences $\psi$ in $L^{c,\ast}_{\kappa,\kappa}$ by induction on the complexity of $\psi$.
For atomic sentences we have $f(\psi)=\psi$ and this is extended through the logical operations $\neg$ and
conjunctions and disjunctions of size $<\kappa$. It remains to define  $f(\psi)$ for $\psi$ of the form
$\forall {\bar x}\, \varphi (  {\bar x};  {\bar y})$ where ${\bar y}$ is a sequence of parameters and $\bar{x}=
\langle x_\alpha:\alpha<\alpha^\ast\rangle $ for some $\alpha^\ast<\kappa$.

We let $f(\forall  \bar{x} \,\varphi (   \bar{x};  {\bar y}))$ be 
the sentence $\forall  \bar{x} ([\bigvee_{n<\omega} \bigwedge_{\alpha<\alpha^\ast}P_n (x_\alpha)]\implies \varphi (   x;  {\bar y}))$.

To define $g$, let $M$ be a model of $\sigma_0$
and define $g(M)=(P_0^M, P_1^M, \ldots)_{n<\omega}$, which gives a weak chain model by the choice of $\sigma_0$. The pair $(f,g)$ is well defined, $f$ preserves the logical operations of $L_{\kappa,\kappa}$ and $g$ is onto. It remains to verify the adjointness condition, which will imply that $(f,g)$ is indeed a Chu transform.

Suppose that $\psi$ is in $L^{c,\ast}_{\kappa,\kappa}$ and $M\in S_0'$ is such that $M\models f(\psi)$.
By induction on the complexity of $\psi$ we have to check that $(P_0^M, P_1^M, \ldots)_{n<\omega}\models^c \psi$. The nontrivial case of this induction is the quantifier case, so suppose that 
$\psi$ is of the form $\forall {\bar x}\, \varphi (  {\bar x};  {\bar y})$, where ${\bar y}$ bounded, that is, ${\bar y}$ is such that there is $n$ for which all individual elements of $y$ are in $P_n^M$.
Let $\bar{x}$ also be bounded.
The definition is made so that  $M\models f(\psi)$ implies that $M\models \varphi ( {\bar x};  {\bar y})$, and therefore, since ${\bar y}$ is bounded we have that $g(M)=(P_0^M, P_1^M, \ldots)_{n<\omega}\models^c \forall {\bar x}\, \varphi (  {\bar x},  {\bar y})$.

{\noindent (b)} The definition of $f$ remains the same for this proof. The definition of $\sigma_1$ is such as to
guarantee that for every $M\in S_1'$ and for every $n<\omega$ we have $|P_n|^M\le \kappa_n$. Therefore, leaving the definition of $g$ the same as in the proof of (a), we have that $g(M)=(P_0^M, P_1^M, \ldots)_{n<\omega}$
is a strict chain model. The proof of the adjointness property remains the same. However, $g$ is not necessarily onto, as it is perfectly possible to have chain models
$(A_n)_{n<\omega}$ for which there are $n$ that satisfy $|A_n|>\kappa_n$. We shall however show that $g$ satisfies the density condition.

So suppose that $\varphi$ is a sentence of $L^{c}_{\kappa,\kappa}$ which has a proper chain model 
$(A_n)_{n<\omega}$. We shall define $B_n$ by induction on $n<\omega$. For convenience let us define
$A_1=\emptyset$. Let $B_0$ be $A_k$ for the largest $k$ such that $|A_k|\le \kappa_0$. Such a $k$ exists because the sets $A_k$ are increasing and $|\bigcup_{n<\omega} A_n|=\kappa$. Having defined $B_n$ as 
$A_{k_n}$ with $|A_{k_n}|\le \kappa_n$ and $k_n$ the largest $k$ with that property, let $B_{n+1}=A_k$
for the largest $k$ such that $|A_k|\le \kappa_{n+1}$. In this way we necessarily have $B_n\subseteq B_{n+1}$.
The chain model $(B_n)_{n<\omega}=g(M)$ for $M$ in $S'_1$ which is obtained on 
$\bigcup_{n<\omega} (B_n)$ by interpreting each $P_n$ as the corresponding $B_n$. By the same argument as in Observation \ref{variouschainmodels}(3), $(B_n)_{n<\omega}$ is a model of $\varphi$.

{\noindent (2)} We use the same $(f,g)$ as in (1)(a). Suppose that we are given a chain model 
$(A_n)_{n<\omega}$ and $\ma\in {\mathcal Mod}(\sigma_0)$ with $\ma=\bigcup_{n<\omega} A_n$.
Define $\mb$ to be the the model whose reduct to $\tau$ is the same as that one of $\ma$, but 
$P^{\mb}_n=A_n$ for each $n$. Then it is easily seen that $\mb\fallingdotseq \ma$ and $g(\mb)= 
(A_n)_{n<\omega}$.
$\eop_{\ref{PCtheorem}}$
\end{proof}

The following is a classical argument resembling the projective class arguments, for example 
Proposition II 3.1.9 in  \cite{BarwiseFeferman}.

\begin{theorem}\label{Oiko} Suppose that $\sigma$ is in $L_{\kappa,\kappa}$ and that $\theta$ is a cardinal such that $L_{\kappa,\kappa}$ is $\theta$-compact. Then so is $(L_{\kappa,\kappa}, \models, {\mathcal Mod}(\sigma))$.
\end{theorem}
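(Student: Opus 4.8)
The plan is to exploit that the logic $(L_{\kappa,\kappa}, \models, {\mathcal Mod}(\sigma))$ has exactly the same syntax and the same satisfaction relation as $L_{\kappa,\kappa}$, differing only in that its admissible models are cut down to those satisfying the single sentence $\sigma$. Consequently, for any set $\Sigma$ of sentences, ``$\Sigma$ has a model in ${\mathcal Mod}(\sigma)$'' is the same assertion as ``$\Sigma\cup\{\sigma\}$ has a model in the full class ${\mathcal M}$''. So to prove $\theta$-compactness of $(L_{\kappa,\kappa}, \models, {\mathcal Mod}(\sigma))$, I would start from a set $\Sigma$ of $L_{\kappa,\kappa}$-sentences each of whose subsets of size $<\theta$ has a model in ${\mathcal Mod}(\sigma)$, form $\Sigma^+=\Sigma\cup\{\sigma\}$, and aim to apply the $\theta$-compactness of $L_{\kappa,\kappa}$ to $\Sigma^+$.

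The key step is to verify that every subset of $\Sigma^+$ of size $<\theta$ is satisfiable in ${\mathcal M}$. Given such a $\Delta\subseteq\Sigma^+$, its trace $\Delta_0=\Delta\cap\Sigma$ is a subset of $\Sigma$ of size $<\theta$, so by hypothesis there is $M\in{\mathcal Mod}(\sigma)$ with $M\models\Delta_0$; since $M\models\sigma$ as well and $\Delta\subseteq\Delta_0\cup\{\sigma\}$, we get $M\models\Delta$. Thus $\Sigma^+$ is $(<\theta)$-satisfiable in ${\mathcal M}$, and the $(\lvert\Sigma^+\rvert,\theta)$-compactness of $L_{\kappa,\kappa}$ (an instance of its $\theta$-compactness) yields a model $N\in{\mathcal M}$ of $\Sigma^+$. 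Because $N\models\sigma$ we have $N\in{\mathcal Mod}(\sigma)$, and $N\models\Sigma$, so $N$ witnesses that $\Sigma$ is satisfiable in ${\mathcal Mod}(\sigma)$, as required.

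There is essentially no deep obstacle here; the content is the routine ``adjoin the defining axiom and pass to the ambient compactness'' move, which is exactly why it mirrors the projective-class argument of Proposition II.3.1.9 of \cite{BarwiseFeferman}. The only points needing care are bookkeeping: one must note that $\lvert\Delta_0\rvert\le\lvert\Delta\rvert<\theta$ so that the hypothesis genuinely applies, and that $\theta$-compactness is invoked in the form ``$(<\theta)$-satisfiable implies satisfiable'' for a family of arbitrary cardinality. The most delicate place, and the one I expect to require the most attention, arises if one prefers to read ${\mathcal Mod}(\sigma)$ as a projective (reduct) class in the smaller vocabulary $\tau$ rather than as an elementary class in the vocabulary $\tau'$ of $\sigma$: then the same argument works verbatim after one adjoins $\sigma$ over $\tau'$, applies the $\theta$-compactness of $L_{\kappa,\kappa}$ at the level of $\tau'$, and finally replaces the resulting $\tau'$-model $N$ by its reduct $N\rest\tau$. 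This reduct step is the only point where the genuinely ``projective'' flavour enters, and it is where I would take most care to match the model class intended in Convention~\ref{PC}.
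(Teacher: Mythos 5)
Your proof is correct and follows essentially the same route as the paper's: adjoin the defining sentence $\sigma$ to the given $(<\theta)$-satisfiable set, check that the enlarged set is still $(<\theta)$-satisfiable, and apply the $\theta$-compactness of $L_{\kappa,\kappa}$ in the enlarged vocabulary $\tau'$, noting that the resulting model lies in ${\mathcal Mod}(\sigma)$ by definition. Your extra care about the vocabulary enlargement matches the paper's own remark about passing to $\tau'$, so nothing further is needed.
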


\begin{proof} Suppose that $\Gamma$ is a set of sentences of $L_{\kappa,\kappa}$ such that
every subset of $\Gamma$ of size $<\theta$ is satisfiable in ${\mathcal Mod}(\sigma)$. In particular, 
$\Gamma\cup \{\sigma\}$ is a 
$(<\theta)$-satisfiable set of sentences of $L_{\kappa,\kappa}$ in the vocabulary $\tau'$ obtained by enlarging $\tau$ by the symbols in $\sigma$. Hence, by the assumption, $\Gamma\cup \{\sigma\}$ has a model $M$. Then $M\in {\mathcal Mod}(\sigma)$ by definition.
$\eop_{\ref{Oiko}}$
\end{proof}

\begin{corollary}\label{strcompact} $L^{c,\ast}_{\kappa,\kappa}$ and $L^{c}_{\kappa,\kappa}$  are $\theta$-compact whenever $\theta$ is
a strongly compact cardinal $>\kappa$.
\end{corollary}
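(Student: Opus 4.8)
The plan is to derive the $\theta$-compactness of both chain logics from the $\theta$-compactness of suitable model classes of $L_{\kappa,\kappa}$, using the Chu reductions already established in Theorem \ref{PCtheorem}. There are three ingredients. First, since $\theta>\kappa$ is strongly compact, $L_{\theta,\theta}$ is $\theta$-compact, and because every $L_{\kappa,\kappa}$-sentence is an $L_{\theta,\theta}$-sentence with the same models (as $\kappa<\theta$), the logic $L_{\kappa,\kappa}$ is itself $\theta$-compact. Second, Theorem \ref{Oiko} applied once with $\sigma=\sigma_0$ and once with $\sigma=\sigma_1$ shows that the model classes $(L_{\kappa,\kappa},\models,{\mathcal Mod}(\sigma_0))$ and $(L_{\kappa,\kappa},\models,{\mathcal Mod}(\sigma_1))$ are again $\theta$-compact. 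Third, Theorem \ref{PCtheorem}(1) gives $L^{c,\ast}_{\kappa,\kappa}\le (L_{\kappa,\kappa},\models,{\mathcal Mod}(\sigma_0))$ and $L^{c}_{\kappa,\kappa}\le (L_{\kappa,\kappa},\models,{\mathcal Mod}(\sigma_1))$, so it remains to push $\theta$-compactness downward along these two Chu transforms, in the spirit of Observation \ref{compactnesspreservation}.

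For $L^{c,\ast}_{\kappa,\kappa}$ the transfer is clean because the map $g$ of Theorem \ref{PCtheorem}(1)(a) is onto. Let $\Sigma$ be a set of $L^{c,\ast}_{\kappa,\kappa}$-sentences every $<\theta$-subset of which has a weak chain model, and put $\Sigma'=f[\Sigma]$. Given any $\Gamma\subseteq\Sigma$ with $|\Gamma|<\theta$ and a weak chain model $M$ with $M\models^c\gamma$ for all $\gamma\in\Gamma$, choose $N'\in{\mathcal Mod}(\sigma_0)$ with $g(N')=M$; the adjointness condition then yields $N'\models f(\gamma)$ for every $\gamma\in\Gamma$ simultaneously, so $f[\Gamma]$ is satisfiable in ${\mathcal Mod}(\sigma_0)$. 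Hence every $<\theta$-subset of $\Sigma'$ is satisfiable in the $\theta$-compact class ${\mathcal Mod}(\sigma_0)$, so $\Sigma'$ itself has a model $N'\in{\mathcal Mod}(\sigma_0)$, and $g(N')\models^c\Sigma$ by adjointness once more.

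For $L^{c}_{\kappa,\kappa}$ the map $g$ of Theorem \ref{PCtheorem}(1)(b) is \emph{not} onto, and this is where the one genuine subtlety lies. Here, however, the density for $L^c_{\kappa,\kappa}$ was established in the proof of Theorem \ref{PCtheorem}(1)(b) by passing from a proper chain model of a sentence to a chain-isomorphic strict chain model $(B_n)_{n<\omega}$ with $|B_n|\le\kappa_n$ lying in the range of $g$ (compare Observation \ref{variouschainmodels}(3)). Since this passage is to a chain-isomorphic model and $L^c_{\kappa,\kappa}$ is closed under chain isomorphism, it preserves \emph{all} $L^c_{\kappa,\kappa}$-sentences at once. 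Thus from a single proper chain model $M$ with $M\models^c\Gamma$ for a $<\theta$-subset $\Gamma$ one obtains, uniformly and simultaneously, a strict chain model $(B_n)_{n<\omega}=g(N')$ with $N'\in{\mathcal Mod}(\sigma_1)$ and $g(N')\models^c\Gamma$, that is $N'\models f[\Gamma]$. With this set-uniform form of density the downward transfer proceeds exactly as in the onto case.

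The main obstacle is precisely this last point. Because the chain logics admit conjunctions only of length $<\kappa$, the generic preservation statement of Corollary \ref{kappacompactnesspreservation} yields only $(\kappa,\theta)$-compactness, whereas $\theta$-compactness demands $(\theta',\theta)$-compactness for \emph{every} $\theta'$, and $\theta>\kappa$ means a $<\theta$-subset cannot be replaced by a single conjunction. Obtaining the full strength therefore rests on the special features of the two Chu transforms of Theorem \ref{PCtheorem} — surjectivity of $g$ in the weak case, and passage to a chain-isomorphic strict subsequence in the proper case — rather than on forming large conjunctions. Once this set-uniform density is isolated, both assertions follow together, and the hypothesis that $\theta>\kappa$ be strongly compact enters only through the first ingredient, guaranteeing the $\theta$-compactness of the ambient logic $L_{\kappa,\kappa}$.
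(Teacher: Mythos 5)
Your proposal is correct and follows the same three-step skeleton as the paper's own proof: first, $L_{\kappa,\kappa}$ is $\theta$-compact because its sentences are $L_{\theta,\theta}$-sentences and $\theta$ is strongly compact; second, Theorem \ref{Oiko} transfers this to the model classes ${\mathcal Mod}(\sigma_0)$ and ${\mathcal Mod}(\sigma_1)$; third, compactness is pushed down along the Chu transforms of Theorem \ref{PCtheorem}. Where you genuinely diverge is in the third step: the paper simply cites Corollary \ref{kappacompactnesspreservation}, whereas you observe that this corollary, whose hypotheses require closure of both logics under conjunctions of length $<\lambda$ with $\lambda$ matching the degree of compactness being transferred, does not literally apply here, since the chain logics only permit conjunctions of length $<\kappa$ while the relevant subsets have size $<\theta$ with $\theta>\kappa$ (indeed, read literally with $\lambda=\kappa<\theta$, the transferred statement is vacuous, as a set of $\le\kappa$ sentences is its own $<\theta$-subset). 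Your repair is sound: the conjunction trick in Observation \ref{compactnesspreservation} is needed only because $g$ is merely dense on single sentences, and for these particular transforms it is dispensable --- in the weak case $g$ is onto, and in the proper case the density construction of Theorem \ref{PCtheorem}(1)(b) passes to a cofinal subsequence of the chain, which is chain-isomorphic to the original and hence (by the chain-isomorphism invariance noted in Observation \ref{wearenice}) preserves all $L^c_{\kappa,\kappa}$-sentences simultaneously, giving density for arbitrary sets of sentences at once. What the paper's appeal to the general corollary buys is brevity; what your argument buys is a proof that actually covers $\theta>\kappa$, together with a reusable principle: when $g$ is surjective, or dense in this set-uniform sense, compactness transfers at every degree with no closure under long conjunctions required. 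This is a worthwhile sharpening of the paper's step, not merely a restatement of it.
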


\begin{proof} By the definition of a strongly compact cardinal, $L_{\theta, \theta}$ is $\theta$-compact. Let 
$\Gamma$ be a $(<\theta)$-satisfiable set of 
 $L_{\kappa,\kappa}$-sentences. Then $\Gamma$  is in particular a set of $L_{\theta, \theta}$-sentences and hence has a model in the sense of $L_{\theta, \theta}$.
By the definition of $\models$ in the two logics, this model is also an $L_{\kappa,\kappa}$ model. So, 
$L_{\kappa,\kappa}$ is $\theta$-compact. (In fact, all this just says that $L_{\kappa, \kappa}\le L_{\theta, \theta}$). By Theorem \ref{Oiko},  $(L_{\kappa,\kappa},\models, {\mathcal Mod}(\sigma_l))$ for $l<2$ are 
$\theta$-compact. Then, by Theorem \ref{PCtheorem} and Corollary \ref{kappacompactnesspreservation}, 
$L^{c,*}_{\kappa,\kappa}$ and $L^{c}_{\kappa,\kappa}$ are $\theta$-compact.
$\eop_{\ref{strcompact}}$
\end{proof}

Finally, we show that, as in the case of
$L^2_{\omega,\omega}$, the second order logics $(L^{c}_{\omega,\omega})^2$ and $(L^{c, \ast}_{\omega,\omega})^2$ are not compact, or even $(\omega,\omega)$-compact. The definition of these logics is the same as in  ordinary chain logic, except that we now consider all second order sentences of $L^2_{\omega, \omega}$, only allowing ordinary or set variables which are bounded by an element of the chain.

\begin{theorem}\label{non-compactness} Neither $(L^{c}_{\omega,\omega})^2$ nor $(L^{c, \ast}_{\omega,\omega})^2$ are $(\omega,\omega)$-compact.
\end{theorem}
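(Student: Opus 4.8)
The plan is to exhibit a single countable set $\Sigma^\ast$ of second-order chain sentences that is finitely satisfiable in the class of proper chain models (hence also among weak chain models) yet has no weak chain model at all; since every proper chain model is a weak chain model, the same $\Sigma^\ast$ simultaneously witnesses the failure of $(\omega,\omega)$-compactness for both $(L^{c}_{\omega,\omega})^2$ and $(L^{c,\ast}_{\omega,\omega})^2$. I would work in the vocabulary $\tau=\{<,c\}$ with $<$ binary and $c$ a constant, and use that, under the chain semantics, first-order subformulas are evaluated classically over the union $A=\bigcup_n A_n$ (a finite block of quantifiers is always witnessed inside a single $A_n$, by Observation \ref{comparison}), while a quantifier over a set variable ranges exactly over the \emph{bounded} subsets of $A$, that is, those contained in some $A_n$. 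Let $\lambda_{\mathrm{lin}}$ be the first-order sentence asserting that $<$ is a strict linear order, let
\[
\alpha:\quad \exists X\,\forall y\,(y\in X\leftrightarrow y<c),
\]
\[
\gamma:\quad \forall Y\,\big[\exists z\,(z\in Y)\to \exists u\,(u\in Y\wedge\forall v\,(v\in Y\to u\le v))\wedge\exists w\,(w\in Y\wedge\forall v\,(v\in Y\to v\le w))\big],
\]
and for $1\le n<\omega$ let $\beta_n$ be the first-order sentence $\exists x_1\cdots\exists x_n(\bigwedge_{i<j}x_i\ne x_j\wedge\bigwedge_i x_i<c)$. Put $\Sigma^\ast=\{\lambda_{\mathrm{lin}},\alpha,\gamma\}\cup\{\beta_n:1\le n<\omega\}$. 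Informally $\alpha$ says the set of $<$-predecessors of $c$ is bounded, $\gamma$ says every bounded non-empty set has a $<$-least and a $<$-greatest element, and $\beta_n$ says $c$ has at least $n$ distinct predecessors.

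Next I would check finite satisfiability in $\mathfrak{M}^{c}$ (and therefore in $\mathfrak{M}^{c,\ast}$). Given a finite $\Sigma_0\subseteq\Sigma^\ast$, choose $N$ so large that $\beta_n\in\Sigma_0$ implies $n\le N$, and take the proper chain model on $A=\omega$ with $A_m=\{0,\ldots,m\}$, $<$ the usual order, and $c^A=N$. Each $A_m$ is finite, so every bounded set is finite; hence $\gamma$ holds, and $\alpha$ holds because the predecessor set $\{0,\ldots,N-1\}$ of $c$ is contained in $A_{N-1}$. Since $c=N$ has exactly $N$ predecessors, $\beta_n$ holds for all $n\le N$, and $\lambda_{\mathrm{lin}}$ is clear. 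Thus $\Sigma_0$ is satisfied by a proper chain model, which is in particular a weak chain model.

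Finally I would derive unsatisfiability of the full $\Sigma^\ast$ in any weak chain model $(A_n)_{n<\omega}$. By $\alpha$ there is a bounded set $X$ equal to $\{y:y<c\}$. The crux is that $\lambda_{\mathrm{lin}}$ together with $\gamma$ forces $X$ to be finite: if $X$ were infinite, then the infinite linear order $(X,<)$ would contain a suborder isomorphic to $\omega$ or to $\omega^\ast$ (an infinite linear order is either well-ordered, yielding a copy of $\omega$, or has an infinite descending sequence, yielding a copy of $\omega^\ast$); such a suborder is again a \emph{bounded} set, being a subset of the bounded set $X$, so $\gamma$ would require it to possess both a least and a greatest element, which a copy of $\omega$ or of $\omega^\ast$ lacks. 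Hence $X$ is finite, say $|X|=k$, contradicting $\beta_{k+1}\in\Sigma^\ast$. Therefore $\Sigma^\ast$ has no weak chain model, and a fortiori no proper chain model, so both logics fail to be $(\omega,\omega)$-compact.

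I expect the main obstacle to lie in the weak case. For ordinary $L^2_{\omega,\omega}$ one argues by undefinability of well-order (a well-order axiom together with an infinite descending sequence of constants), but in weak chain models the set quantifiers see only bounded sets, so such a ``well-order'' axiom is harmlessly weakened: a non-well-order all of whose descending sequences are \emph{unbounded} would satisfy it, and the naive argument collapses. The construction neutralises this by (i) forcing the offending predecessor set to be bounded through the existential second-order sentence $\alpha$, and (ii) expressing finiteness of bounded sets internally through $\gamma$, exploiting that every subset of a bounded set is again bounded and hence available to the universal set-quantifier. For $\mathfrak{M}^{c}$ the sentence $\gamma$ is in fact automatically true, since there every bounded set is finite; it is only in $\mathfrak{M}^{c,\ast}$ that $\gamma$ does any real work, which is why a single theory $\Sigma^\ast$ can serve both logics at once.
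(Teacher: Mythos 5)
Your proof is correct and follows essentially the same strategy as the paper's: an existential second-order sentence (your $\alpha$, the paper's $\phi$) forces a definable set to be bounded, so that---subsets of a bounded set being bounded---genuine second-order quantification over its subsets becomes available (the paper's Lemmas \ref{claim1} and \ref{claim2}), after which a countable, finitely satisfiable theory makes that set infinite while a second-order axiom forbids this. The only cosmetic difference is the contradiction mechanism: the paper asserts that $<$ well-orders $P$ and names a descending $\omega$-sequence of constants inside $P$, whereas you assert via $\gamma$ that every nonempty bounded set has endpoints (hence is finite, by the standard fact that an infinite linear order contains a copy of $\omega$ or $\omega^\ast$) and force infinitude with the cardinality sentences $\beta_n$.
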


\begin{proof}
Let $\phi$ be the second 
order sentence 
$$\exists X\forall y\, (P(y)\to X(y)).$$ 

\begin{lemma}\label{claim1}The chain models of $\phi$ are the chain models in which $P$ is contained in one level of the chain.
\end{lemma}

\begin{proof} Suppose that $M=(M_n)_{n<\omega}$ as above is a chain model of $\phi$.
 Then there is an interpretation $Q$ of the variable $X$ such that $P^M\subseteq Q$. By the truth definition of chain models the variable $X$ ranges over subsets of $M$ which are  included in some $M_n$. Thus $Q\subseteq M_n$ for some $n$. Hence $P^M\subseteq M_n$.  For the other direction, suppose that $M=(M_n)_{n<\omega}$ is a chain model in which $P^M$ is contained in some $M_n$. Then we can pick $P^M$ as the interpretation of $X$ and $\phi$ becomes true.
\end{proof}

\begin{lemma}\label{claim2}
In models of $\phi$ we have full second order quantification over subsets of 
$P$. 
\end{lemma}

\begin{proof}
Suppose $M=(M_n)_{n<\omega}$ is a chain model of $\phi$. By Lemma \ref{claim1}, 
there is $n$ such that $P^M\subseteq M_n$. If we relativize second order quantification to $P$ as follows:
$$\exists X\psi(X)\mapsto\exists X[(\forall y(X(y)\to P(y))\wedge\psi(X)]$$
$$\forall X\psi(X)\mapsto\forall X[(\forall y(X(y)\to P(y))\to\psi(X)]$$
then we can  allow the standard interpretation for second order quantifiers because the relativization forces the bound second order variables to range over subsets of $P$, and hence to be always subsets of $M_n$.
\end{proof}

To finish the proof, let $\theta$ be the second order sentence which says that $<$ is a well-order on the predicate $P$. In chain models of $\theta$ we have no guarantee that $<$ is really a well-order because a descending sequence may cross over all the sets $M_n$. In models of 
$\theta\wedge \phi$ we know by the above that $<$ is really a well-order because any potential descending chain is a subset of $P$ and hence a subset of some $M_n$. We can now form a finitely consistent theory  $\{\theta,\phi\}\cup\{c_0>c_1>c_2>\ldots\}\cup \{P(c_n):\, n<\omega\}$, which has no models.
$\eop_{\ref{non-compactness}}$
\end{proof}

\subsection{Shelah's Problem 1.4}\label{shelahproblem}
Let us recall the definition of Craig's Interpolation:

\begin{definition}\label{interpolation} (1) For a logic $\LL$ and sentences $\rho_0, \rho_1$ we say $\rho_0\models_\LL \rho_1$ if for any model $M$ of $\LL$ in a vocabulary including those of $\rho_0$ and $\rho_1$, we have \[M\models_\LL \rho_0\implies M\models_\LL \rho_1.\]
{\noindent (2)} A logic $\LL$ is said to satisfy {\em Interpolation} if for any two vocabularies $\tau_0$ and $\tau_1$ and sentences $\rho_l\in \LL(\tau_l) (l <2)$, if $\rho_0\models_\LL \rho_1$ then there is a sentence $\rho$ in $\LL(\tau_0\cap \tau_1)$ such that $\rho_0\models_\LL \rho $ and $\rho\models_\LL \rho_1$.
\end{definition}

Shelah \cite{Sh797} showed that $L^1_\kappa$ satisfies Interpolation. This motivated him to ask:

{\bf Problem 1.4. from \cite{Sh797}}: Suppose that $\kappa$ is a singular strong limit of countable cofinality. Is there a logic between $ L_{\kappa^+, \omega}$ and 
$L_{\kappa^+, \kappa^+}$ which satisfies Interpolation ?

The logic $L^1_\kappa$ does not solve Problem 1.4. from \cite{Sh797}, since it is not above $L_{\kappa^+,\omega}$. We show that, modulo a model class, chain logic does. That actually follows easily from the above results and the earlier work of Cunnigham \cite{Cunnigham} who showed various versions of interpolation for chain logic, including Craig, Malitz and Lyndon interpolation. Her proofs are quite different from Shelah's proof for $L^1_\kappa$, because she uses the method of 
chain consistency properties, which has not yet been applied to $L^1_\kappa$ but is well developed for chain logic through the work of Karp and Cunnigham. This method is quite powerful and very close to showing that a logic is complete since it consists of recursively isolating sets of sentences that capture all sentences that are true in a certain class of models.

\begin{corollary}\label{solves1.4.} Suppose that $\kappa$ is a singular strong limit cardinal of countable cofinality. Then the chain logic $L^{c,\ast}_{\kappa,\kappa}$ satisfies 
\[
L_{\kappa^+,\omega}\le L^{c,\ast}_{\kappa,\kappa} \le (L_{\kappa^+,\kappa^+}, \models, {\mathcal Mod}(\sigma_0))
\]
and has the Interpolation, so modulo a model class, it gives a solution to Problem 1.4. from \cite{Sh797}.
\end{corollary}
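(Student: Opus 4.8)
The plan is to assemble the three asserted properties from results already in hand, since the substantive work has been carried out in Theorem \ref{PCtheorem} and in Cunnigham's \cite{Cunnigham}. Concretely, I must verify the lower bound $L_{\kappa^+,\omega}\le L^{c,\ast}_{\kappa,\kappa}$, the upper bound $L^{c,\ast}_{\kappa,\kappa}\le (L_{\kappa^+,\kappa^+},\models,{\mathcal Mod}(\sigma_0))$, and Interpolation, and then read off the answer to Problem 1.4.

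First I would establish the lower bound. Because $\kappa$ is singular, Observation \ref{moreforless} applied with $\lambda=\kappa$ and $\theta=\omega$ furnishes a meaning-preserving translation of every $L_{\kappa^+,\omega}$-sentence into an equivalent $L_{\kappa,\omega}$-sentence; as both logics share the class ${\mathcal M}$ of ordinary models, taking this translation as $f$ and $g=\id_{\mathcal M}$ (which is onto, hence dense in the sense of Definition \ref{def:lesseq}) gives a Chu transform, so $L_{\kappa^+,\omega}\le L_{\kappa,\omega}$. Combining this with Theorem \ref{lowerbound}(1), which yields $L_{\kappa,\omega}\le L^{c,\ast}_{\kappa,\kappa}$, and invoking transitivity (Observation \ref{partailorder}) produces $L_{\kappa^+,\omega}\le L^{c,\ast}_{\kappa,\kappa}$.

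For the upper bound, Theorem \ref{PCtheorem}(1)(a) already supplies $L^{c,\ast}_{\kappa,\kappa}\le (L_{\kappa,\kappa},\models,{\mathcal Mod}(\sigma_0))$. It then remains to note the trivial link $(L_{\kappa,\kappa},\models,{\mathcal Mod}(\sigma_0))\le (L_{\kappa^+,\kappa^+},\models,{\mathcal Mod}(\sigma_0))$: every $L_{\kappa,\kappa}$-sentence is literally an $L_{\kappa^+,\kappa^+}$-sentence (conjunctions and quantifier strings of length $<\kappa$ are in particular of length $<\kappa^+$), and since $\sigma_0$ is itself an $L_{\kappa,\kappa}$-sentence the model class ${\mathcal Mod}(\sigma_0)$ is literally the same class in both settings. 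Hence $f$ the syntactic inclusion together with $g=\id_{{\mathcal Mod}(\sigma_0)}$ satisfies adjointness and, being onto, density as well. Transitivity again delivers $L^{c,\ast}_{\kappa,\kappa}\le (L_{\kappa^+,\kappa^+},\models,{\mathcal Mod}(\sigma_0))$.

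Finally, Interpolation is precisely Theorem 2.6 of Cunnigham \cite{Cunnigham}, and this is the one place where the full strong-limit hypothesis on $\kappa$ is genuinely used: the two bounds above require only that $\kappa$ be singular of countable cofinality, whereas Cunnigham's interpolation theorem (in the tradition of Karp's school) needs $\kappa$ to be a strong limit of cofinality $\omega$. Stringing the three pieces together shows that $L^{c,\ast}_{\kappa,\kappa}$ lies between $L_{\kappa^+,\omega}$ and the model class ${\mathcal Mod}(\sigma_0)$ of $L_{\kappa^+,\kappa^+}$ and satisfies Interpolation, which is the promised partial solution to Problem 1.4. I do not anticipate a technical obstacle; the only real subtlety is conceptual, namely that the upper bound is a model class rather than the full logic $L_{\kappa^+,\kappa^+}$ — exactly the ``modulo a model class'' caveat — so I would take care to state explicitly that each $g$ appearing in the composition is onto, guaranteeing the density condition at every link of the chain.
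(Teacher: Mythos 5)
Your proposal is correct and takes essentially the same route as the paper: the paper's own proof of Corollary~\ref{solves1.4.} consists precisely of citing Cunnigham's Theorem~2.6 for Interpolation and Theorems~\ref{lowerbound} and~\ref{PCtheorem} (together with Observation~\ref{moreforless}) for the two Chu inequalities, which is exactly the decomposition you assemble. Your explicit verification of adjointness and density at each link, and your remark that the strong limit hypothesis is needed only for Interpolation while both bounds require only singularity of countable cofinality, simply make precise what the paper leaves implicit.
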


\begin{proof} Theorem 2.6 of Cunnigham \cite{Cunnigham} shows that 
$L^{c,\ast}_{\kappa,\kappa}$ has  Interpolation. The Chu inequalities are proved in Theorem \ref{lowerbound}  and Theorem \ref{PCtheorem} respectively (note also Observation \ref{moreforless}).
$\eop_{\ref{solves1.4.}}$
\end{proof}

\subsection{The Union Lemma}\label{theunion}
The Union Lemma  features in several proofs about $L^1_\kappa$, see for example Remark 1.17. (b) 
in \cite{Sh797}\footnote{ It says that some concessions were made in the definition of $L^1_\kappa$ in order to have the Union Lemma hold for $L^1_\kappa$.} We show that the Union Lemma holds for  chain logic.

\begin{definition}\label{}
For chain models $(A_n)_{n<\omega}$ and $(B_n)_{n<\omega}$ we define 
\begin{enumerate}
\item $(A_n)_{n<\omega} \subseteq (B_n)_{n<\omega}$
if $\bigcup_n A_n\subseteq\bigcup_n  B_n$ and $\forall n\exists m\,(A_n\subseteq B_m)$
\item $(A_n)_{n<\omega} \prec_{L^c_{\kappa, \kappa}} (B_n)_{n<\omega}$ if 
 $(A_n)_{n<\omega} \subseteq (B_n)_{n<\omega}$
and for any bounded sequence $\bar{a}$ in $(A_n)_{n<\omega}$ and formula $\phi(\bar{x})$ in  $L^c_{\kappa, \kappa}$ we have
$$(A_n)_{n<\omega} \models^c \phi(\bar{a})\Leftrightarrow (B_n)_{n<\omega} \models^c\phi(\bar{a}).$$
\end{enumerate}

\end{definition}

\begin{observation}\label{} Suppose $(A_n)_{n<\omega} \subseteq (B_n)_{n<\omega}$. Then: \begin{enumerate}
\item If $\bar{a}$ is bounded in $(A_n)_{n<\omega}$, then it is bounded in $(B_n)_{n<\omega}$.
\item If $(A_n)_{n<\omega}\models^c\phi$ and $\phi$ is existential sentence, then $(B_n)_{n<\omega}\models^c\phi$. 
\end{enumerate}
\end{observation}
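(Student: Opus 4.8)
The plan is to handle the two clauses separately, drawing only on the definition of $\subseteq$ between chain models and on the truth definition (\ref{newtruth}); no game-theoretic or Scott-analysis machinery is needed here.

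For clause (1) I would argue directly from the definitions. If $\bar a$ is bounded in $(A_n)_{n<\omega}$, then by definition $\bar a\subseteq A_n$ for some $n$. The second conjunct of $(A_n)_{n<\omega}\subseteq(B_n)_{n<\omega}$, namely $\forall n\,\exists m\,(A_n\subseteq B_m)$, supplies an $m$ with $A_n\subseteq B_m$, and hence $\bar a\subseteq B_m$, so $\bar a$ is bounded in $(B_n)_{n<\omega}$. It is worth noting that this is precisely the clause of the definition that does the work: the mere set inclusion $\bigcup_n A_n\subseteq\bigcup_n B_n$ would not by itself force a whole level $A_n$ into a single level $B_m$, and without that a bounded tuple could in principle fail to remain bounded.

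For clause (2) I would write the existential sentence as $\phi\equiv\exists\bar x\,\psi(\bar x)$ with $\psi$ quantifier-free. Applying (\ref{newtruth}) to the hypothesis $(A_n)_{n<\omega}\models^c\phi$ produces some $n<\omega$ and a tuple $\bar a\in A_n^{<\kappa}$ with $\psi[\bar a]$ true; as $\psi$ is quantifier-free, $\models$ and $\models^c$ coincide on it and both reduce to truth in the underlying model. Clause (1) then places $\bar a$ inside some $B_m$, so $\bar a\in B_m^{<\kappa}$, and it remains only to check that $\bar a$ still witnesses $\psi$ after passing to the larger chain. Feeding this witness back into (\ref{newtruth}) yields $(B_n)_{n<\omega}\models^c\phi$, completing the argument.

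The one point I would flag as requiring care --- indeed the only genuine content beyond bookkeeping --- is the persistence of quantifier-free truth. That $\psi[\bar a]$ holds in $\bigcup_n B_n$ because it holds in $\bigcup_n A_n$ silently uses that the underlying model of $(A_n)_{n<\omega}$ is a $\tau$-submodel of that of $(B_n)_{n<\omega}$, not merely a subset of its universe. I would therefore make explicit that the relation $\subseteq$ is read as submodel inclusion, which for a relational $\tau$ is automatic, so that quantifier-free formulas are absolute between the two models on tuples of the smaller one. With that convention, the hypothesis that $\phi$ is existential is exactly what is needed: there is no quantifier alternation to transfer, only the single block of bounded existential witnesses delivered by (\ref{newtruth}), and those are governed by clause (1).
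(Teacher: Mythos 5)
Your proof is correct and is exactly the routine verification the paper intends --- the paper in fact states this Observation without proof, and your argument, combining the second clause of the definition of $\subseteq$ for chain models with the truth definition (\ref{newtruth}), is the evident one. Your flagged point about reading $\subseteq$ as substructure inclusion (so that quantifier-free formulas are absolute on tuples of the smaller model, consistent with the paper's convention that subsets of relational $\tau$-models carry the induced structure) is a worthwhile clarification rather than a gap, as is the implicit convention that an existential sentence is one in the normal form $\exists\bar{x}\,\psi(\bar{x})$ with $\psi$ quantifier-free; a more general existential fragment would need only a trivial induction on complexity with the same key step.
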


\begin{definition}\label{union} Suppose that for each $m<\omega $ we have a chain model $(A^m_n)_{n<\omega}$ 
and that these models satisfy  $(A^m_n)\subseteq (A^{m+1}_n)$ for all $m$. 
Let $A^\omega_n=\bigcup_{m\le n} A^m_n$, for each $n$.

We define the \emph{union} of the sequence $\langle (A_n)_{n<\omega}:\,m<\omega\rangle $, as the chain model 
$(A^\omega_n)_{n<\omega}$.
\end{definition}

\begin{observation}\label{} Using the notation of Definition \ref{union}, we have that  
\[
(A^m_n)_{n<\omega}\subseteq (A^{\omega}_n)_{n<\omega}
\]
for all $m$.
\end{observation}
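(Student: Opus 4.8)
The plan is to unwind the definition of $\subseteq$ for chain models and verify its two clauses directly, using only that each individual chain $(A^m_n)_{n<\omega}$ is increasing in the level index $n$ (which holds by the definition of a weak chain model) together with the defining formula $A^\omega_n=\bigcup_{j\le n}A^j_n$. Fix $m<\omega$.

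For the first clause I would show $\bigcup_n A^m_n\subseteq\bigcup_n A^\omega_n$. The key point is that whenever $n\ge m$ the set $A^m_n$ is literally one of the terms in the union $A^\omega_n=\bigcup_{j\le n}A^j_n$, so $A^m_n\subseteq A^\omega_n$. Since the chain $(A^m_n)_n$ is increasing, $\bigcup_n A^m_n=\bigcup_{n\ge m}A^m_n$, and the latter is contained in $\bigcup_{n\ge m}A^\omega_n\subseteq\bigcup_n A^\omega_n$. For the second clause I would produce, for each $n$, a level $k$ with $A^m_n\subseteq A^\omega_k$; the choice $k=\max(m,n)$ works. Indeed $k\ge n$ gives $A^m_n\subseteq A^m_k$ by monotonicity of the chain in its level index, and $k\ge m$ guarantees that $A^m_k$ occurs as the $j=m$ term of $A^\omega_k=\bigcup_{j\le k}A^j_k$, whence $A^m_k\subseteq A^\omega_k$. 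Concatenating the two inclusions gives $A^m_n\subseteq A^\omega_k$, as required.

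There is essentially no hard step here; the only thing to keep straight is the bookkeeping between the stage index $m$ (which chain) and the level index $n$ (which set within a chain), and the fact that all containments get pushed up to the common level $k=\max(m,n)$. For completeness I would also remark that $(A^\omega_n)_n$ is genuinely a weak chain model: for $j\le n$ we have $A^j_n\subseteq A^j_{n+1}$, so $A^\omega_n=\bigcup_{j\le n}A^j_n\subseteq\bigcup_{j\le n+1}A^j_{n+1}=A^\omega_{n+1}$, again using only level-monotonicity of each chain. Notably, neither clause uses the standing hypothesis $(A^m_n)\subseteq(A^{m+1}_n)$ of Definition~\ref{union}; the observation holds for any sequence of chain models via the stated formula for $A^\omega_n$.
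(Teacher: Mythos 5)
Your proof is correct, and it matches the paper exactly in spirit: the paper states this observation without proof, treating it as the routine unwinding of the two clauses of $\subseteq$ from the formula $A^\omega_n=\bigcup_{j\le n}A^j_n$ and level-monotonicity of each chain, which is precisely what you carry out (with the right choice $k=\max(m,n)$ for the second clause). Your side remarks are also accurate: $(A^\omega_n)_{n<\omega}$ is indeed a weak chain model, and the nesting hypothesis $(A^m_n)\subseteq(A^{m+1}_n)$ of Definition~\ref{union} is not needed for this particular observation, only for the Union Lemma itself.
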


Putting these observations together, we easily obtain that the chain logic satisfies the following Union Lemma.
The Union Lemma for $L^1_\kappa$, in contrast, is a difficult theorem.

\begin{theorem}[Union Lemma]\label{unionlemma}  If $(A^m_n)_{n<\omega}\prec_{L^c_{\kappa, \kappa}}(A^{m+1}_n)_{n<\omega}$ for all $m<\omega$, then 
$(A^m_n)_{n<\omega}\prec_{L^c_{\kappa, \kappa}}(A^{\omega}_n)_{n<\omega}$.
\end{theorem}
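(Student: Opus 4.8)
The plan is to prove the elementarity clause by induction on the complexity of the formula, establishing the stronger statement that for \emph{every} $m<\omega$, every $L^c_{\kappa,\kappa}$-formula $\phi(\bar x)$, and every bounded sequence $\bar a$ in $(A^m_n)_{n<\omega}$, one has $(A^m_n)_{n<\omega}\models^c\phi(\bar a)$ iff $(A^\omega_n)_{n<\omega}\models^c\phi(\bar a)$. The required containment $(A^m_n)_{n<\omega}\subseteq(A^\omega_n)_{n<\omega}$ is already recorded in the preceding observation, so only this Tarski--Vaught-style equivalence remains. Before starting the induction I would record two preliminary facts. First, $\prec_{L^c_{\kappa, \kappa}}$ is transitive: if $\bar a$ is bounded in the smallest of three nested chain models, then boundedness propagates upward under $\subseteq$ (as observed after Definition~\ref{union}), so the two defining biconditionals compose and the containments are transitive; hence the hypothesis $(A^k_n)_n\prec_{L^c_{\kappa, \kappa}}(A^{k+1}_n)_n$ yields $(A^m_n)_n\prec_{L^c_{\kappa, \kappa}}(A^{m'}_n)_n$ for all $m\le m'$. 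Second, and crucially, any sequence $\bar b$ bounded in the union model is already bounded in a single approximating model: if $\bar b\subseteq A^\omega_n=\bigcup_{m\le n}A^m_n$, then iterating the chain-containment $(A^m_n)_n\subseteq(A^{m+1}_n)_n$ from $m$ up to $n$ gives, for each $m\le n$, an index $N(m,n)$ with $A^m_n\subseteq A^n_{N(m,n)}$; taking $N^\ast=\max_{m\le n}N(m,n)$ and using that the sets $A^n_\ell$ increase in $\ell$, we get $A^\omega_n\subseteq A^n_{N^\ast}$, so $\bar b$ is bounded in $(A^n_\ell)_{\ell<\omega}$.

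The atomic and quantifier-free cases are absolute, since $\tau$ is relational and $\bigcup_n A^m_n$ is a substructure of $\bigcup_n A^\omega_n$; the Boolean cases (negation and conjunctions/disjunctions of length $<\kappa$) follow from the induction hypothesis using the Tarski clauses for $\models^c$, which hold because $L^c_{\kappa,\kappa}$ is nice (Observation~\ref{wearenice}). The only substantial case is $\phi=\exists\bar y\,\psi(\bar y,\bar x)$ with $\bar y$ of length $<\kappa$. The upward direction is routine: a bounded witness $\bar b$ in $(A^m_n)_n$ remains bounded in $(A^\omega_n)_n$, and the induction hypothesis applied to $\psi$ and the bounded tuple $\bar a\frown\bar b$ turns it into a witness there. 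For the downward direction, suppose $(A^\omega_n)_n\models^c\exists\bar y\,\psi(\bar y,\bar a)$; by the truth definition (\ref{newtruth}) there is a bounded witness $\bar b\subseteq A^\omega_n$ with $(A^\omega_n)_n\models^c\psi(\bar b,\bar a)$. By the second preliminary fact $\bar b$ is bounded in some $(A^{m'}_n)_n$; setting $m''=\max(m,m')$, both $\bar a$ and $\bar b$ are bounded in $(A^{m''}_n)_n$ by upward preservation of boundedness. The induction hypothesis for $\psi$ at the index $m''$ and the tuple $\bar a\frown\bar b$ then gives $(A^{m''}_n)_n\models^c\psi(\bar b,\bar a)$, whence $(A^{m''}_n)_n\models^c\exists\bar y\,\psi(\bar y,\bar a)$ since $\bar b$ is bounded there. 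Finally, applying $(A^m_n)_n\prec_{L^c_{\kappa, \kappa}}(A^{m''}_n)_n$ (the transitivity fact) to the formula $\exists\bar y\,\psi(\bar y,\bar x)$ with the bounded parameter tuple $\bar a$ pulls this down to $(A^m_n)_n\models^c\exists\bar y\,\psi(\bar y,\bar a)$, as desired.

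I expect the heart of the argument, and the only place where the specific shape of the union $A^\omega_n=\bigcup_{m\le n}A^m_n$ is used, to be the second preliminary fact: the claim that a bounded witness produced in the union necessarily lives, boundedly, inside one of the finite approximations. This is the chain-logic analogue of the classical observation that a witness in the union of an elementary chain already occurs in some member of the chain, and it is what lets the elementary-chain hypothesis be brought to bear; the finiteness of the union $\bigcup_{m\le n}$ at each level, together with iterated chain-containment, is exactly what makes it go through. No circularity arises: the transitivity fact is applied to the full formula but is part of the hypothesis, while the induction hypothesis is only ever invoked on the strictly simpler formula $\psi$.
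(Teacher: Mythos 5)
Your proposal is correct and is exactly the argument the paper has in mind: the paper's proof of Theorem~\ref{unionlemma} is a one-line appeal to ``the usual Tarski--Vaught argument, noticing that every bounded sequence in the union is bounded in one of the models,'' and your ``second preliminary fact'' (with its iterated chain-containment proof that $A^\omega_n\subseteq A^n_{N^\ast}$) is precisely that noticed fact, spelled out. Your induction, including the use of the hypothesis $\prec_{L^c_{\kappa,\kappa}}$ (via transitivity) on the full existential formula in the downward direction, is the standard Tarski--Vaught scheme the paper invokes, so the two proofs coincide in substance.
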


\begin{proof} By the usual Tarski-Vaught argument, noticing that every bounded sequence in the union
is bounded in one of the models.
$\eop_{\ref{unionlemma}}$
\end{proof}

\subsection{The chain-independent fragment}\label{chainind}
A point in favour of chain models seems to be the difference between the views that various chain decompositions of the same model can take on the same sentence. The following theorem renders that intuition formal.
To avoid trivialities, we formulate it in terms of proper chain models.

\begin{definition}\label{chainindependent} An $L_{\kappa, \kappa}$-sentence $\varphi$ is said to be
{\em chain-independent} if
 for any proper chain models 
$(A_n)_{n<\omega}, (B_n)_{n<\omega}$ we have that
\[
\bigcup_{n<\omega}A_n=\bigcup_{n<\omega}B_n \implies  [(A_n)_{n<\omega}\models^c \varphi\iff 
 (B_n)_{n<\omega}\models^c \varphi].
 \]
 The {\em chain-independent fragment of} $L_{\kappa, \kappa}$ is the set
of chain-independent sentences of $L_{\kappa, \kappa}$.
\end{definition}

It is obvious that $L_{\kappa\omega}$ is included in the chain-independent fragment of 
$L_{\kappa, \kappa}$.

\begin{theorem}\label{nondependent}   An $L_{\kappa, \kappa}$-sentence $\varphi$ is chain-independent iff for any model $\ma$ of size $\kappa$
 \[
 \ma\models \varphi\iff \mbox{ for any proper chain model }(A_n)_{n<\omega}\, [\bigcup_{n<\omega}\, A_n=\ma\implies  (A_n)_{n<\omega}\models^c \varphi].
\]
\end{theorem}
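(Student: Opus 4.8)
The plan is to reduce the displayed equivalence to a single \emph{coincidence lemma}: if $\varphi$ is chain-independent, then for every model $\ma$ of size $\kappa$ and \emph{every} proper decomposition $(A_n)_{n<\omega}$ of $\ma$ one has $(A_n)_{n<\omega}\models^c\varphi$ if and only if $\ma\models\varphi$. Granting this, both directions follow. Assuming chain-independence, the lemma shows that the common chain-value of all proper decompositions of $\ma$ is exactly the classical value $\ma\models\varphi$, which is precisely the displayed biconditional. Conversely, if classical truth in $\ma$ coincides with $\models^c$ in every proper decomposition of $\ma$, then all such decompositions share the value $\ma\models\varphi$ and hence agree, so $\varphi$ is chain-independent; this direction is immediate and needs no further work.

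To prove the coincidence lemma I would argue through the semantic (model-checking) game $G(\ma,\varphi)$, in which \'Elo\"ise defends $\exists$ and $\bigvee$ while Ab\'elard attacks at $\forall$ and $\bigwedge$, a move at a quantifier being a tuple of length $<\kappa$; classical truth $\ma\models\varphi$ means \'Elo\"ise has a winning strategy, and $(A_n)_{n<\omega}\models^c\varphi$ is the analogous statement for the \emph{bounded} game $G^c((A_n)_{n<\omega},\varphi)$ in which every quantifier move must be contained in a single level $A_m$. The two basic ingredients are: (i) since the subformula relation in $L_{\kappa,\kappa}$ is well-founded with \emph{finite} branches (a subformula chain has strictly decreasing construction-stages, and ordinals admit no infinite descending chain), every play of either game is finite and mentions fewer than $\kappa$ elements; and (ii) any set of $<\kappa$ elements of $\ma$ can be placed inside $A_0$ of some proper decomposition, so any finite configuration arising in a play can be made bounded. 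Soundness $(A_n)_{n<\omega}\models^c\varphi\Rightarrow\ma\models\varphi$ and completeness $\ma\models\varphi\Rightarrow(A_n)_{n<\omega}\models^c\varphi$ are then obtained by transferring strategies between the two games, using chain-independence to replace the given decomposition by one adapted to the current finite play whenever a witness has to be relocated. Here the hypothesis $\cf(\kappa)=\omega$ enters only through the existence of proper decompositions, i.e.\ through the sequence $\langle\kappa_n:\,n<\omega\rangle$.

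The main obstacle is the strategy transfer at \emph{existential} positions in the completeness half: a classical winning strategy for \'Elo\"ise may prescribe an \emph{unbounded} witness, and there is in general no single proper decomposition in which all of the (up to $\kappa$ many) witnesses demanded by a full strategy are simultaneously bounded --- the naive ``Skolem-closed decomposition'' does not exist, because a level of size $<\kappa$ cannot witness all of its own $<\kappa$-tuples. This is exactly where chain-independence must be invoked rather than a blanket induction on complexity, which breaks at the universal step since a bounded universal quantifier is strictly weaker than a classical one. The resolution I would pursue is to argue \emph{one play at a time}: a play is finite, so the finitely many tuples it involves are jointly boundable in some decomposition $D$, and in $D$ chain-independence guarantees that the bounded game has the same value as in the originally given decomposition; thus \'Elo\"ise's bounded win in $D$ already computes the correct (decomposition-invariant) chain-value. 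Applying the same argument to $\neg\varphi$, which is again chain-independent, yields the dual inequality and hence soundness, completing the lemma.
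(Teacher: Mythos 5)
Your overall architecture (reduce to a ``coincidence lemma'' saying that for chain-independent $\varphi$ the chain value of every proper decomposition of $\ma$ equals the classical value, with the converse immediate) matches what the theorem needs, but your proof of the hard half has a genuine gap at the strategy-transfer step. Chain-independence gives invariance of \emph{truth values} across decompositions; it does not let you assemble play-by-play choices of decompositions into a bounded win anywhere. To conclude $(A_n)_{n<\omega}\models^c\varphi$ from $\ma\models\varphi$ you must exhibit \emph{one fixed} proper decomposition $D$ in which \'Elo\"ise wins the whole bounded game (chain-independence then transports the value to the given $(A_n)_{n<\omega}$). Choosing $D$ after seeing a single finite play does not do this: a bounded win on $D$ requires answering \emph{all} of Ab\'elard's $D$-bounded moves with $D$-bounded witnesses, and the classical strategy's witnesses along the other plays may be unbounded in $D$; as a play grows you would have to migrate to larger decompositions, and the bounded strategies on different decompositions need not cohere — only their values agree, by chain-independence, and whether that common value is ``true'' is precisely what is at issue. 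So the clause ``\'Elo\"ise's bounded win in $D$ already computes the correct chain-value'' presupposes the conclusion.

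The missing ingredient is a reflection fact: if $\ma\models\psi$ then \emph{some} proper decomposition of $\ma$ chain-satisfies $\psi$ (equivalently, the paper's Lemma \ref{helpful} in contrapositive form). Your stated reason for dismissing the Skolem-closed decomposition — that a level of size $<\kappa$ cannot witness all of its own $<\kappa$-tuples — is answered by interleaving: at stage $n$ one closes only under witnesses and counterexamples for subformula instances whose parameters lie in $A_n$ and whose quantifier strings have length $<\kappa_n$; since every bounded tuple has length $<\kappa=\sup_n\kappa_n$, every instance is caught at some finite stage, and for $\kappa$ a strong limit the levels remain of size $<\kappa$. Once this hull lemma is in hand, the theorem follows by the paper's short argument, with no game machinery at all: if $\varphi$ is chain-independent, $\ma\models\varphi$, and some proper decomposition failed $\varphi$, then by chain-independence \emph{all} proper decompositions of $\ma$ would satisfy $\neg\varphi$, whence $\ma\models\neg\varphi$ by the lemma — a contradiction; the remaining implication of the displayed biconditional is the lemma itself, and the converse direction is the triviality you already noted.
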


\begin{proof} For the proof we require the following lemma.

\begin{lemma}\label{helpful} Suppose that $\ma$ is a $\tau$ model and $\varphi$ an $L_{\kappa, \kappa}$-sentence such that $(A_n)_{n<\omega}\models^c \varphi$ for any chain decomposition 
$\langle A_n:\, n<\omega\rangle$ of $\ma$. Then $\ma\models \varphi$.
\end{lemma}

\begin{proof-} {\bf of Lemma \ref{helpful}.} By induction on the complexity of $\varphi$.
$\eop_{\ref{helpful}}$
\end{proof-}

For  the proof of the Theorem, first suppose that $\varphi$ is chain-independent, $\ma\models \varphi$ and $(A_n)_{n<\omega}$ 
is a proper chain decomposition of  $\ma$. If $(A_n)_{n<\omega}\nmodels ^c \varphi$ then by the definition of
chain-independence it follows that for any proper chain decomposition $(B_n)_{n<\omega}$ of  $\ma$ we have 
$(B_n)_{n<\omega}\models ^c \neg\varphi$. By Lemma \ref{helpful} it follows that $\ma\models \neg\varphi$,
a contradiction.

In the other direction, suppose that $\varphi$ satisfies the 
equivalence, but $\varphi$ is not chain-independent.
Then there are proper chain decompositions $(A_n)_{n<\omega}$ and $(B_n)_{n<\omega}$ of $\ma$
such that $(A_n)_{n<\omega}\models^c \varphi$ and $(B_n)_{n<\omega}\models^c \neg\varphi$, a contradiction.
$\eop_{\ref{nondependent}}$
\end{proof}

Theorem \ref{nondependent} can be used to show that the chain-independent fragment can express  a number
of classical concepts:

\begin{theorem}\label{Aronszajn} Suppose that $\lambda<\kappa$ and $\cf(\lambda)>\aleph_0$. The following properties can be expressed by chain-independent sentences of $L_{\kappa, \kappa}$.

\begin{itemize}
\item $G$ is a graph which omits a clique of size $\lambda$,
\item $T$ is a tree which omits a branch of length $\lambda$,
\item $<$ is a linear order with no decreasing sequences of length $\lambda$.
\end{itemize}

\end{theorem}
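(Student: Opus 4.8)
The plan is to write each of the three properties as the negation of a purely existential $L_{\kappa,\kappa}$-sentence asserting the existence of a $\lambda$-indexed ``bad configuration''. For the graph case I would take
\[
\varphi_{\mathrm{clique}} \;=\; \forall (x_\alpha)_{\alpha<\lambda}\,\neg\Big[\bigwedge_{\alpha<\beta<\lambda}\big(x_\alpha\neq x_\beta \wedge E(x_\alpha,x_\beta)\big)\Big],
\]
and analogously let $\varphi_{\mathrm{branch}}$ negate $\exists (x_\alpha)_{\alpha<\lambda}\bigwedge_{\alpha<\beta<\lambda} x_\alpha <_T x_\beta$, and $\varphi_{\mathrm{lin}}$ negate $\exists (x_\alpha)_{\alpha<\lambda}\bigwedge_{\alpha<\beta<\lambda} x_\beta < x_\alpha$. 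Since $\lambda<\kappa$, each of these uses a string of $<\kappa$ quantifiers and a conjunction of length $<\kappa$, so it is a legitimate sentence of $L_{\kappa,\kappa}$, and it says exactly that the model omits a $\lambda$-clique, a $<_T$-chain of order type $\lambda$, or a $<$-decreasing $\lambda$-sequence, respectively.

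The heart of the argument is a single combinatorial observation exploiting $\cf(\lambda)>\aleph_0$. Unwinding the chain truth definition (\ref{newtruth}), for a proper chain model $(A_n)_{n<\omega}$ one has $(A_n)_{n<\omega}\models^c \exists(x_\alpha)_{\alpha<\lambda}\psi$ if and only if some single level $A_n$ contains a $\lambda$-sequence witnessing $\psi$. I would then show that if $\mathfrak A=\bigcup_{n<\omega}A_n$ contains a bad configuration $X$ of size $\lambda$, then some level already contains one: indeed $X=\bigcup_{n<\omega}(X\cap A_n)$ is a countable increasing union, and because $\cf(\lambda)>\aleph_0$ a countable supremum of cardinals below $\lambda$ stays below $\lambda$, so $|X\cap A_n|=\lambda$ for some $n$. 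The remaining point is that $X\cap A_n$ is again a bad configuration: closure under subsets is immediate for cliques and for $<$-decreasing sequences, while for chains I use that a subset of size $\lambda$ of a set of order type $\lambda$ has order type exactly $\lambda$ (as $\lambda$ is a cardinal), so after re-enumeration $X\cap A_n$ yields an increasing $\lambda$-sequence inside $A_n$.

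Combining this with the trivial converse (if $\mathfrak A$ omits the configuration then no subset, hence no level, contains one) gives that, for every proper chain decomposition of $\mathfrak A$,
\[
(A_n)_{n<\omega}\models^c\varphi \iff \mathfrak A \text{ omits the corresponding configuration}.
\]
The right-hand side is manifestly independent of the chosen chain, so chain-independence follows straight from Definition \ref{chainindependent}. Alternatively, since $\mathfrak A\models\varphi$ in the ordinary sense also says precisely that $\mathfrak A$ omits the configuration, the displayed equivalence is exactly the criterion of Theorem \ref{nondependent}, which gives chain-independence at once.

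I expect the only real subtlety to be the branch/decreasing-sequence bookkeeping: one must check that intersecting a configuration of order type $\lambda$ with a level preserves order type $\lambda$ and not merely cardinality $\lambda$, which is where $\lambda$ being a cardinal (and not only $\cf(\lambda)>\aleph_0$) enters. Everything else---legitimacy of the sentences, the reduction of $\models^c$ to a single level, and subset-closure for cliques---is routine, and the three cases run in parallel.
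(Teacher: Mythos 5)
Your proposal is correct and follows essentially the same route as the paper: the same $L_{\kappa,\kappa}$ sentences, the same key observation that by $\cf(\lambda)>\aleph_0$ a $\lambda$-sized bad configuration in $\bigcup_n A_n$ must meet some level $A_n$ in a set of size $\lambda$, and the same appeal to the criterion of Theorem \ref{nondependent}. The only difference is that you spell out the order-type bookkeeping for the branch and decreasing-sequence cases (a $\lambda$-sized subset of a $\lambda$-ordered set has order type exactly $\lambda$ since $\lambda$ is a cardinal), which the paper leaves implicit by treating only the graph case.
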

\begin{proof}
The proof is basically the same for all three examples, so let us show it for the case of graphs. Let $\rho$ be an $L_{\kappa, \kappa}$ sentence which says that $R$ is a binary relation and that it has no clique of 
size $\lambda$. We claim that $\rho$ is chain-independent.

Let $\ma$ be a model of $\rho$ size $\kappa$ and let $(A_n)_{n<\omega}$ a proper chain model with
$\bigcup_{n<\omega} A_n=\ma$. If $\ma\models\rho$ then clearly no $A_n$ can have a clique of size $\lambda$.
Suppose on the other hand that no $A_n$ has a clique of size $\lambda$, but that $\ma$ has such a clique, say $K$. Since $\cf(\lambda)>\aleph_0$ and $\ma$, there will be $n$ such that $K\cap A_n$ has size 
$\lambda$. Therefore $(A_n)_{n<\omega}\models^c \neg \rho.$
$\eop_{\ref{Aronszajn}}$
\end{proof}

\section{Concluding remarks}\label{fin}
We have shown that in the case of a singular cardinal $\kappa$ of countable cofinality which in addition satisfies $\kappa=\beth_\kappa$, the chain logic $L^{c,\ast}_{\kappa, \kappa}$ satisfies all the motivating requirements 
for the introduction of the logic $L^1_\kappa$, including solving Problem 1.4 from \cite{Sh797} which was left open by $L^1_\kappa$. 
A caveat to note, however, is that $L^1_\kappa$ and $L^{c,\ast}_{\kappa, \kappa}$ have a major difference in what they consider a structure: the former has the classical concept of a structure while the latter is based on chain models. On the other hand, $L^{c,\ast}_{\kappa, \kappa}$ has a classical syntax, while this is not the case with $L^1_\kappa$.

One could look for further desirable properties of a logic at a singular cardinal, most importantly the property of 
$\kappa$-compactness. Neither $L^1_\kappa$ nor  chain logic have it.

We finish by some philosophical remarks about the nature of the work done in the paper. 
We solved Shelah's problem, admittedly changing it along the way, by moving from classical logics to chain logics. However, our position has been to study Karp's old and very nice chain logic, which we have already studied in the context of singular cardinals. Here, we succeeded to further emphasise its canonical nature by characterising it among all chain logics. The characterisation is very much in the spirit of Lindstr\"om's characterisation of first order logic. This complements a programme which was already started by Barwise 
\cite{bairwiseablogic} who extended Lindstr\"om's proof to infinitary logics by means of a rather strong assumption called Karp Property. Shelah found a characterisation for his infinitary logic $L^1_\kappa$ in the context of singular cardinals and was able to avoid the use of Karp Property. In the process Shelah gave up the requirement, which some might find natural, that $L^1_\kappa$ should have an easily recognisable syntax. We allow the change to chain models and can achieve a Lindstr\"om-like proof without without giving up the syntax. In the context of studying Karp's chain logic, which has already been a subject of study, our results seem to emphasise its overall naturality.

\bibliographystyle{plain}
\bibliography{JMbiblio}

\begin{thebibliography}{10}

\bibitem{bairwiseablogic}
K.J. Bairwise.
\newblock Axioms for abstract model theory.
\newblock {\em Annals of Mathematical Logic}, 7:221--265, 1974.

\bibitem{BarrChu}
Michael Barr.
\newblock {\em $*$-Autonomous Categories}, volume 752 of {\em Lecture Notes in
  Mathematics}.
\newblock Springer-Verlag, Berlin Heidelberg, 1979.

\bibitem{Barwise}
K.~Jon Barwise.
\newblock Axioms for abstract model theory.
\newblock {\em Annals of Mathematical Logic}, 2-3(7):221--265, 1974.

\bibitem{BarwiseFeferman}
K.~Jon Barwise and Solomon Feferman, editors.
\newblock {\em Model-theoretic logics}.
\newblock Perspectives in Mathematical Logic. Springer, New York, Berlin,
  Heidelberg, Tokyo, 1985.

\bibitem{Boss}
William Boos.
\newblock Infinitary compactness without strong inaccessibility.
\newblock {\em J. Symbolic Logic}, 41(1):33--38, 1976.

\bibitem{ChuonChu}
Po-Hsiang Chu.
\newblock {\em $*$-Autonomous Categories}, volume 752 of {\em Lecture Notes in
  Mathematics}, chapter~{\em Appendix: Constructing $*$-autonomous categories}.
\newblock Springer-Verlag, Berlin-Heidelberg, 1979.

\bibitem{Cunnighamthesis}
Ellen Cunningham.
\newblock {\em Chain models for infinite quantifier languages}.
\newblock PhD thesis, University of Maryland, 1974.

\bibitem{Cunnigham}
Ellen Cunningham.
\newblock Chain models: applications of consistency properties and
  back-and-forth techniques in infinite-quantifier languages.
\newblock In {\em Infinitary logic: in memoriam {C}arol {K}arp}, pages
  125--142. Lecture Notes in Math., Vol. 492. Springer, Berlin, 1975.

\bibitem{deRijke}
Maarten de~Rijke.
\newblock A {L}indstr\"{o}m theorem for modal logic.
\newblock In {\em Modal logic and process algebra ({A}msterdam, 1994)},
  volume~53 of {\em CSLI Lecture Notes}, pages 217--230. CSLI Publ., Stanford,
  CA, 1995.

\bibitem{Dickmann}
M.~A. Dickmann.
\newblock {\em Large Infinitary Languages}, volume~83 of {\em Studies in Logic
  and the Foundations of Mathematics}.
\newblock North Holland, Amsterdam, 1975.

\bibitem{singtrees}
Mirna D{\v z}amonja and Jouko V{\"a\"a}n{\"a}nen.
\newblock Chain models, trees of singular cardinality and dynamic {EF} games.
\newblock {\em Journal of Mathematical Logic}, 11(1):61--85, 2011.

\bibitem{Ebbinghaus}
H.-D. Ebbinghaus.
\newblock Extended logics: the general framework.
\newblock In {\em Model-theoretic logics}, Perspect. Math. Logic, pages 25--76.
  Springer, New York, 1985.

\bibitem{Enqvist2}
Sebastian Enqvist.
\newblock A general {L}indstr\"{o}m theorem for some normal modal logics.
\newblock {\em Log. Univers.}, 7(2):233--264, 2013.

\bibitem{Enqvist}
Sebastian Enqvist.
\newblock A new coalgebraic {L}indstr\"{o}m theorem.
\newblock {\em J. Logic Comput.}, 26(5):1541--1566, 2016.

\bibitem{Joukoablogic}
Marta Garc\'{i}a-Matos and Jouko V\"{a}\"{a}n\"{a}nen.
\newblock Abstract model theory as a framework for universal logic.
\newblock In {\em Logica universalis}, pages 19--33. Birkh\"{a}user, Basel,
  2005.

\bibitem{green1974}
Judy Green.
\newblock $\sigma_1$ compactness for next admissible sets.
\newblock {\em J. Symbolic Logic}, 39(1):105--116, 03 1974.

\bibitem{Green}
Judy Green.
\newblock Consistency properties for finite quantifier languages.
\newblock In D.W. Kueker, editor, {\em Infinitary logic: in memoriam {C}arol
  {K}arp}, volume 492 of {\em Lecture Notes in Mathematics}, pages 73--123.
  Springer-Verlag, Berlin-Heidelberg-New York, 1975.

\bibitem{MR0209132}
Carol~R. Karp.
\newblock Finite-quantifier equivalence.
\newblock In {\em Theory of {M}odels ({P}roc. 1963 {I}nternat. {S}ympos.
  {B}erkeley)}, pages 407--412. North-Holland, Amsterdam, 1965.

\bibitem{Karpintroduceschain}
Carol~R. Karp.
\newblock Infinite-quantifier languages and $\omega$-chains of models.
\newblock In William Craig, C.~C. Chang, Leon Henkin, John Addison, Dana Scott,
  and Robert Vaught, editors, {\em Proceedings of the {T}arski {S}ymposium},
  Proceedings of Symposia in Pure Mathematics, XXV, pages 225--232. American
  Mathematical Society, Providence, R.I., 1979.
\newblock Held at the University of California, Berkeley, Calif., June 23--30,
  1971, Corrected reprint of the 1974 original.

\bibitem{KeislerTarski}
H.~J. Keisler and A.~Tarski.
\newblock From accessible to inaccessible cardinals. {R}esults holding for all
  accessible cardinal numbers and the problem of their extension to
  inaccessible ones.
\newblock {\em Fund. Math.}, 53:225--308, 1963/1964.

\bibitem{Kurz}
Alexander Kurz and Yde Venema.
\newblock Coalgebraic {L}indstr\"{o}m theorems.
\newblock In {\em Advances in modal logic. {V}olume 8}, pages 292--309. Coll.
  Publ., London, 2010.

\bibitem{Lindstrom}
Per Lindstr{\"o}m.
\newblock On extensions of elementary logic.
\newblock {\em Theoria}, 35(1):1--11, 1969.

\bibitem{Makkaichainlogic}
M.~Makkai.
\newblock Generalizing {V}aught sentences from {$\omega $} to strong cofinality
  {$\omega $}.
\newblock {\em Fund. Math.}, 82:105--119, 1974.
\newblock Collection of articles dedicated to Andrzej Mostowski on the occasion
  of his sixtieth birthday, VI.

\bibitem{ErrataMakkai}
M.~Makkai.
\newblock Errata to the paper: ``{G}eneralizing {V}aught sentences from
  {$\omega $}\ to strong confinality {$\omega $}'' ({F}und. {M}ath., {\bf 82}
  (1974), 105--119).
\newblock {\em Fund. Math.}, 82:385, 1974/75.
\newblock Collection of articles dedicated to Andrzej Mostowski on the occasion
  of his sixtieth birthday, VIII.

\bibitem{fparente}
Francesco Parente.
\newblock On {C}hu transforms.
\newblock private communication, August 2019.

\bibitem{Scottinfinitary}
Dana Scott.
\newblock Logic with denumerably long formulas and finite strings of
  quantifiers.
\newblock In {\em Theory of {M}odels ({P}roc. 1963 {I}nternat. {S}ympos.
  {B}erkeley)}, pages 329--341. North-Holland, Amsterdam, 1965.

\bibitem{Sh1101}
Saharon Shelah.
\newblock Isomorphic limit ultrapowers for infinitary logic.
\newblock arXiv:1810.12729.

\bibitem{Sh797}
Saharon Shelah.
\newblock Nice infinitary logics.
\newblock {\em Journal of AMS}, 25:395--427, 2012.

\bibitem{StaviI}
Jonathan Stavi.
\newblock Compactness properties of infinitary and abstract languages. {I}.
  {G}eneral results.
\newblock In {\em Logic {C}olloquium '77 ({P}roc. {C}onf., {W}roc\l aw, 1977)},
  volume~96 of {\em Stud. Logic Foundations Math.}, pages 263--275.
  North-Holland, Amsterdam-New York, 1978.

\bibitem{joukobookgames}
Jouko V{\"{a}}{\"{a}}n{\"{a}}nen.
\newblock {\em Models and Games}.
\newblock Cambridge University Press, New York, NY, USA, 1st edition, 2011.

\bibitem{JoukoLindstrom}
Jouko V{\"{a}}{\"{a}}n{\"{a}}nen.
\newblock Lindstr{\"o}m's theorem.
\newblock In Jean-Yves Beziau, editor, {\em Universal Logic: An Anthology},
  pages 231--236. Springer, 2012.

\bibitem{JoukowithVelickovic}
Jouko V\"a\"an\"anen and Boban Veli{\v{c}}kovi{\'c}.
\newblock A strengthening of {$L^1_\kappa$}.
\newblock in preparation.

\bibitem{JoukowithVillaveces}
Jouko V\"a\"an\"anen and Andr\'es Villaveces.
\newblock A fragment of {$L^1_\kappa$}.
\newblock in preparation.

\bibitem{vanBenthem}
Johan van Benthem.
\newblock A new modal {L}indstr\"{o}m theorem.
\newblock {\em Log. Univers.}, 1(1):125--138, 2007.

\bibitem{vanBenthemCate}
Johan Van~Benthem, Balder Ten~Cate, and Jouko V\"{a}\"{a}n\"{a}nen.
\newblock Lindstr\"{o}m theorems for fragments of first-order logic.
\newblock {\em Log. Methods Comput. Sci.}, 5(3):3:3, 27, 2009.

\bibitem{Vojtas}
Peter Vojt{\'a}{\v s}.
\newblock Topological cardinal invariants and the {G}alois-{T}ukey category.
\newblock In {\em Recent Developments of General Topology and its
  Applications}, volume~67, Berlin, 1992. International Conference in Memory of
  Felix Hausdorff (1868-1942), Mathematical Research.

\bibitem{Pourmahdian}
Reihane Zoghifard and Massoud Pourmahdian.
\newblock First-order modal logic: frame definability and a {L}indstr\"{o}m
  theorem.
\newblock {\em Studia Logica}, 106(4):699--720, 2018.

\end{thebibliography}

\end{document}